\numberwithin{equation}{section}
\newtheorem{theorem}{Theorem}
\newtheorem{corollary}[theorem]{Corollary}
\newtheorem{lemma}[theorem]{Lemma}
\newtheorem{proposition}[theorem]{Proposition}
\theoremstyle{remark}
\newtheorem{remark}[theorem]{Remark}
\newtheorem{example}{Example}
\newcommand{\bv}[1]{\boldsymbol{#1}}
\newcommand{\eqdist}{\overset{\mathcal{D}}{=}}
\newcommand{\convd}{\overset{\mathcal{D}}{\to}}
\newcommand{\convp}{\overset{\mathbb{P}}{\to}}
\newcommand{\convas}{\overset{\text{a.s.}}{\to}}
\newcommand{\convw}{\overset{v}{\to}}
\newcommand{\cond}[2]{\left.#1\,\right| #2}
\newcommand{\cov}{\mathrm{Cov}}
\newcommand{\var}{\mathrm{Var}}
\newcommand{\ind}{\mathds{1}}
\newcommand{\diag}{\mathrm{diag}}
\newcommand{\iidsim}{\overset{\text{iid}}{\sim}}
\newcommand{\n}{{(n)}}
\newcommand{\law}{\mathscr{L}}
\newcommand{\Rad}{\mathrm{Rad}}
\newcommand{\mathd}{\mathrm{d}}
\newcommand{\bigO}{\mathcal{O}}
\begin{document}

\begin{frontmatter}
\title{Normal approximations for discrete-time occupancy processes}
\runtitle{Approximations for occupancy processes}

\begin{aug}
\author{\fnms{Liam} \snm{Hodgkinson}\thanksref{t1}\thanksref{t2}\ead[label=e1]{liam.hodgkinson@uqconnect.edu.au}},
\author{\fnms{Ross} \snm{McVinish}\thanksref{t2}\ead[label=e2]{r.mcvinish@uq.edu.au}},
\and
\author{\fnms{Philip K.} \snm{Pollett}\thanksref{t2}\ead[label=e3]{pkp@maths.uq.edu.au}}

\thankstext{t1}{Supported by an Australian Postgraduate Award.}
\thankstext{t2}{All authors are supported in part by the Australian Research Council (Discovery Grant DP150101459 and the ARC Centre of Excellence for 
Mathematical and Statistical Frontiers, CE140100049).}

\runauthor{L. Hodgkinson, R. McVinish, P.K. Pollett}

\affiliation{University of Queensland}

\address{
School of Mathematics and Physics\\
University of Queensland\\
St. Lucia, Brisbane \\
Queensland, 4072\\
Australia \\
\printead{e1}\\
\printead*{e2}\\
\printead*{e3}
}

\end{aug}

\begin{abstract}
We study normal approximations for a class of discrete-time occupancy processes, namely, Markov chains with transition kernels of product Bernoulli form. This class encompasses numerous models which appear in the complex networks literature, including stochastic patch occupancy models in ecology, network models in epidemiology, and a variety of dynamic random graph models. Bounds on the rate of convergence for a central limit theorem are obtained using Stein's method and moment inequalities on the deviation from an analogous deterministic model. As a consequence, our work also implies a uniform law of large numbers for a subclass of these processes.
\end{abstract}

\begin{keyword}[class=MSC]
\kwd[Primary ]{60J10}
\kwd[; secondary ]{60F05, 60F25, 92D30, 92D40}
\end{keyword}

\begin{keyword}
\kwd{central limit theorem; network models; quantitative law of large numbers; spreading processes; Stein's method; stochastic patch occupancy models}
\end{keyword}

\end{frontmatter}

\maketitle

\section{Introduction}

Treating a complex system as a large collection of interacting entities has become a standard modelling paradigm \citep{Bonabeau:2002aa, Grimm:2005aa,Liggett:1997, Liggett:2012aa}. Growing interest in binary interacting particle systems and agent-based modelling, where entities are treated as nodes with a binary state, has led to the development of general and highly detailed discrete-time models of use in a wide variety of fields.  Ecologists have been captivated by the capacity of \emph{stochastic patch occupancy models} (SPOMs) to help explain the influence of spatial heterogeneity on population dynamics \citep{Hanski:1994,Hanski:2003aa,MacKenzie:2002aa}.  \emph{Probabilistic cellular automata} (PCA) have enjoyed similar popularity in statistical mechanics \citep{Dobrushin:1990aa,TO:15,Wolfram:1983aa}, and \emph{network models} and related \emph{random graph models} have seen numerous applications in epidemiology, and social
and computer science  \citep{Boccaletti:2014aa,Bollobas:2001,ChungLu:2006,Durrett:2007}.

Encompassing many of these models is a class of processes, called here \emph{occupancy processes}, whose state records the occupancy at each of $n$ nodes, and whose transitions at the various nodes are independent conditional on the state. More precisely, an occupancy process is a discrete-time Markov chain $\bv{X}_t=(X_{1,t},\dots,X_{n,t})$, $t=0,1,\dots$, taking values in $\{0,1\}^n$  ($1$ denoting occupancy) such that, given $\bv{X}_t = \bv{x}$, 
$X_{1,t+1},\dots,X_{n,t+1}$ are independent. Under this assumption,
the transition probabilities of $\bv{X}_t$ are given in terms
of functions $P_{i,t}:\{0,1\}^n \to [0,1]$ ($i=1,\dots,n;\ t=0,1,\dots$)
given by
\[
\mathbb{P}\left(\cond{X_{i,t+1}=1}{\bv{X}_t = \bv{x}}\right) = P_{i,t}(\bv{x}).
\]
This includes all finite PCA (occupancy processes need not be local)
and all SPOMs, as well as any network model
where individuals behave independently in the short-term. 
In the random graph framework, 
the nodes become the edges of the graph, with occupancies dictating the
corresponding adjacency matrix. Here, the occupancy process subset of dynamic random graphs are those which evolve at each time-step according to a set of
edge-independent rules, which can depend (in an arbitrary way)
on the state of the graph at the previous time-step.

Borrowing from the terminology of cellular automata \citep{Wolfram:1983aa}, we refer to the collection of functions $P_t = (P_{i,t})_{i=1}^n$, $t=0,1,\dots$, as the \emph{global rule} of $\bv{X}_t$, and to each $P_{i,t}$ as the \emph{local rule} of $X_{i,t}$.  It is convenient to write each local rule in terms of functions $S_{i,t}, C_{i,t}: \, \{0,1\}^n \to [0,1]$,
called the \emph{survival} and \emph{colonization
functions} respectively, satisfying 
\begin{equation}
P_{i,t}(\bv{x}) = x_i S_{i,t}(\bv{x}) + (1 - x_i) C_{i,t}(\bv{x}),
\qquad
\bv{x}\in\{0,1\}^n.
\label{eq:TransDecomp}
\end{equation}
The increased precision afforded by an occupancy process often comes at the expense of tractability. Models of practical interest are not usually amenable to traditional finite-state Markov chain analysis, for the state space is often prohibitively large. Even the efficient simulation of very large systems of this kind presents an ongoing challenge \citep{Brand:2015aa}. Instead, it is common to rely on approximations, the global rule suggesting a natural deterministic model for the evolution of occupancy probabilities (see for example \citep{Ovaskainen:2001aa}). Assuming the domain of $P_t$ is extended to the interior of the hypercube $[0,1]^n$, we may define $\bv{p}_t = (p_{1,t},\dots,p_{n,t})$ by
\begin{equation}
p_{i,t+1} = P_{i,t}(\bv{p}_t),
\qquad i=1,\dots,n. 
\label{eq:Detalternative}
\end{equation}
For the extension of the global rule, assume (I) $P_t\in\mathcal{C}^{3}$,  providing some regularity to the approximation, and 
(II), so that~(\ref{eq:TransDecomp}) holds with $S_{i,t}$, $C_{i,t}$ independent
of their $i$-th argument, $\partial_{i}^{2}P_{i,t}\left(x\right)=0$ for each $i=1,\dots,n$, where $\partial_i$ is the partial derivative with respect to the $i$-th component. We assume throughout that $\bv{X}_0$ is fixed
and $\bv{p}_0 = \bv{X}_0$, although the case where the $X_{i,0}$ are independent random variables with  
$\mathbb{P}(X_{i,0}=1) = p_{i,0}$ can be treated by taking $P_{i,0}(\bv{x}) = p_{i,0}$ and starting the process from time $t = 1$ instead. Additional assumptions would be required to treat dependent $\bv{X}_0$.

One clear advantage of working with~(\ref{eq:Detalternative}) is that the long-term dynamics are easier to elucidate, especially in the time-homogeneous case (this is discussed in greater detail in Section \ref{sec:LongTerm}). 
On the other hand, (\ref{eq:Detalternative}) captures none of the
variability present in the original system, limiting its applicability as a predictive
model. To address this, we instead consider a distributional approximation,
analysing the fluctuations of $\bv{X}_t$ about its deterministic approximation.
In light of the conditional
independence feature of the occupancy process,
it would seem reasonable to expect that these fluctuations are approximately normal. Define the autoregressive Gaussian process $\bv{Z}_t=(Z_{1,t},\dots,Z_{n,t})$ by the recursion
\begin{equation}
Z_{i,t} = p_{i,t} + \sum_{j=1}^n \partial_j P_{i,t}(\bv{p}_{t-1}) (Z_{j,t-1}-
p_{j,t-1}) + z_{i,t} \sqrt{p_{i,t}(1-p_{i,t})},
\end{equation}
where each $z_{i,t}$ is an independent standard normal random variable. 
Letting $\zeta_t = n^{-1/2}(\bv{X}_t - \bv{p}_t)$ and $\xi_t = n^{-1/2}(\bv{Z}_t - \bv{p}_t)$ denote the normalised fluctuations of $\bv{X_t}$
and $\bv{Z}_t$ about $\bv{p}_t$, we show under a few additional assumptions that, for large $n$, the projections $\langle \zeta_t, h \rangle$ and $\langle \xi_t, h \rangle$ over $h \in \mathbb{R}^n$, are close in law. For a distributional approximation of $f(\bv{X}_t)$
where $f \in \mathcal{C}^3([0,1]^n)$ is arbitrary, we appeal to
the linear approximation 
\begin{equation}
\label{eq:LinApproxGen}
f(\bv{X}_t) \approx f(\bv{p}_t) + \sqrt{n}\langle \zeta_t, \nabla f(\bv{p}_t)\rangle.
\end{equation}
An $L^1$ estimate for the error in (\ref{eq:LinApproxGen}) 
in terms of the derivatives of $f$ is provided in Proposition~\ref{prop:LinEstimate}.

The cross-covariances of $\langle \xi_t, h \rangle$ are established by polarization: for each
$0 \leq s \leq t$, letting $D_{s,t} = D_s \cdots D_{t-1}$ with $D_t = DP_t(\bv{p}_t)^\top$ (interpreting the empty product as unity),
\[
\cov[\langle \xi_s,h\rangle,\langle \xi_t, h'\rangle] = \sum_{r=1}^{s\wedge t} \sigma_r [D_{r,s} h,\,
D_{r,t} h'],
\]
where $\sigma_t$ is the symmetric bilinear form defined for $h,h' \in \mathbb{R}^n$
by
\[
\sigma_t[h,h'] := \frac1n \sum_{i=1}^n h_i h_i' \, p_{i,t} (1 - p_{i,t}).
\]
The quadratic case $\sigma_t[h,h] =:\,\sigma_t^2[h]$ provides the approximate
variance introduced in the $t$-th step of the occupancy process; indeed
$\var\langle \xi_t, h \rangle = \sigma_t^2[h] + \var\langle\xi_{t-1}, D_{t-1} h\rangle$ for each $t \geq 1$.

Our analysis proceeds via entirely non-asymptotic methods.
For any $1 \leq q \leq \infty$ and arbitrary $h \in \mathbb{R}^n$, 
we exploit the celebrated method of Stein~\citep{Chen:2010aa} to 
bound the difference in law between
$\langle \zeta_t, h \rangle$ and $\langle \xi_t, h \rangle$ under the $L^q$ metric (defined
for
random variables $X$ and $Y$ by $\|\law(X)-\law(Y)\|_q$, where
$\law(X)$ denotes the distribution function (or law) of $X$). As a consequence
of H\"{o}lder's inequality and integration by parts, if $q^{-1}+r^{-1} = 1$,
then
\begin{equation}
\|\law(X)-\law(Y)\|_q = \sup\{|\mathbb{E}g(X)-\mathbb{E}g(Y)|\,:\,\|g'\|_r
\leq 1\},
\end{equation}
where the derivative $g'$ is understood in the absolutely continuous sense.
For particular choices of $q$ this reduces to other commonly used metrics;
for example, the case $q = 1$ coincides with the \emph{Wasserstein
metric}:
by the Kantorovich-Rubinstein formula \citep{Kantorovich:1958aa}
\[
\|\law(X)-\law(Y)\|_1 = \inf_{X' \sim X, Y' \sim Y} \mathbb{E}|X'-Y'|,
\]
where the infimum is taken over all couplings $(X',Y')$ of $X$ and $Y$. In
contrast, the case $q = \infty$,
\[
\|\law(X)-\law(Y)\|_{\infty} = \sup_{x \in \mathbb{R}}
|\mathbb{P}(X \leq x) - \mathbb{P}(Y \leq x)|
\]
is the \emph{Kolmogorov metric}. 

Denoting by $\|\cdot\|_{\infty}$ the supremum norm, define,
for each $t=0,1,\dots$, quantities
\begin{align*}
\alpha_t & = \max_{j=1,\dots,n} 
\sum_{\substack{i=1 \\ i\neq j}}^{n} \| \partial_{j} P_{i,t} \|_{\infty},
 & \beta_t^2 & = \frac1n \sum_{\substack{i,j=1 \\ i\neq j}}^n \|\partial_j P_{i,t}\|_{\infty}^2, \\
\Gamma_t & = \max_{j,k=1,\dots,n} \sum_{i=1}^n \|\partial_j \partial_k P_{i,t}\|_{\infty}, & 
\gamma_t & = \frac1n\sum_{i,j=1}^n \|\partial_j^2 P_{i,t}\|_{\infty}, \\
\delta_t &= \max_{j=1,\dots,n}\sum_{i,k=1}^n\|\partial_j \partial_k^2 P_{i,t}\|_{\infty}, & \alpha_{s,t} & = \sum_{r=s+1}^{t-1} \alpha_{r},
\end{align*}
and let $\psi_t = \beta_t + \gamma_t$. 
The quantities $ \alpha_{t} $ and $ \beta_{t} $ are not altogether unusual, as the speed of the system and the dependence between nodes is well quantified in the derivatives of each local rule. For systems whose general dynamics strongly depend upon the state of a single node, $\alpha_t$ will be large, and so there is little hope in expecting the deterministic process~(\ref{eq:Detalternative}) to be representative. This is the case for the mainland-island metapopulation model previously studied by the authors \citep{McVinish:2012aa}, which is instead well-approximated by a \emph{semi-deterministic} system. Moreover, $\gamma_t$, $\Gamma_t$, and $\delta_t$ provide some essential measures of the regularity of the global rule.

Our main approximation result is encapsulated in Theorem
\ref{thm:WasserBound}.

\begin{theorem}
\label{thm:WasserBound}
There exists a universal constant $C > 0$ such that, 
for any $h \in \mathbb{R}^n$,
any integer $t \geq 1$ and $1 \leq q \leq \infty$,
\begin{equation}
\label{eq:GenWasserBound}
\|\law\langle \zeta_t, h \rangle-\law\langle \xi_t, h \rangle\|_q \leq
C \|h\|_{\infty}^{4-1/q} \sqrt{\frac{1+\log n}{n}} \sum_{s=0}^{t-1} 
\frac{\kappa_s \cdot e^{(4-1/q)\alpha_{s,t}}}{\sigma_{s+1}^{4-2/q}[D_{s+1,t}h]},
\end{equation}
where, for every $t=0,1,\dots$,
\[
\kappa_{t}=(1+\alpha_t+n\Gamma_{t}+n^{1/2}\delta_t)\sum_{s=0}^{t-1}[1+\psi_s\sqrt{n}
(1+\psi_s\sqrt{n})]t e^{16\alpha_{s,t}}.
\]
\end{theorem}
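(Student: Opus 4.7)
My approach would combine a telescoping recursion for $\langle\zeta_t,h\rangle$ and $\langle\xi_t,h\rangle$, a step-by-step Stein replacement, and propagated moment bounds on the deviation $\bv{X}_s-\bv{p}_s$. The starting point is the one-step decomposition
\[
X_{i,s}-p_{i,s}=[X_{i,s}-P_{i,s-1}(\bv{X}_{s-1})]+[P_{i,s-1}(\bv{X}_{s-1})-P_{i,s-1}(\bv{p}_{s-1})],
\]
where the second bracket is Taylor-expanded to second order about $\bv{p}_{s-1}$. Iterating the resulting one-step identity and using $\zeta_0=0$ gives
\[
\langle\zeta_t,h\rangle=\sum_{s=1}^t M_s(D_{s,t}h)+\sum_{s=1}^t E_s(D_{s,t}h),\qquad \langle\xi_t,h\rangle=\sum_{s=1}^t W_s(D_{s,t}h),
\]
with $M_s(h')=n^{-1/2}\sum_i h'_i(X_{i,s}-P_{i,s-1}(\bv{X}_{s-1}))$, which, conditional on $\mathcal F_{s-1}=\sigma(\bv{X}_0,\ldots,\bv{X}_{s-1})$, is a sum of independent centred bounded random variables; $W_s(h')=n^{-1/2}\sum_i h'_iz_{i,s}\sqrt{p_{i,s}(1-p_{i,s})}$, which is exactly $\mathcal N(0,\sigma_s^2[h'])$ by construction of $\bv Z_s$; and $E_s(h')$ denoting the quadratic Taylor remainder at step $s$.

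\textbf{Remainder and moment propagation.} The $L^1$ contribution of $\sum_s E_s(D_{s,t}h)$ is bounded using the regularity quantities $\Gamma_s,\gamma_s,\delta_s$ together with moment estimates on $\|\bv{X}_s-\bv{p}_s\|$. I would establish these moment estimates recursively, splitting each step into the conditionally zero-mean Bernoulli fluctuation (controlled by $\beta_s,\gamma_s$, hence $\psi_s$) plus a linear drift whose amplification is governed by $\alpha_s$. This is exactly the mechanism that produces the $\sum_{s}[1+\psi_s\sqrt{n}(1+\psi_s\sqrt{n})]e^{16\alpha_{s,t}}$ piece of $\kappa_t$; the coefficient $(1+\alpha_t+n\Gamma_t+n^{1/2}\delta_t)$ arises from a further application of the same Taylor expansion in the variance-matching step below.

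\textbf{Stein replacement.} To compare $\sum_s M_s(D_{s,t}h)$ with $\sum_s W_s(D_{s,t}h)$ in the $L^q$ metric, I would use the duality $\|\law(X)-\law(Y)\|_q=\sup\{|\mathbb E g(X)-\mathbb E g(Y)|:\|g'\|_r\le 1\}$, $1/q+1/r=1$, and a hybrid argument. Define $H_k=\sum_{s\le k}W_s(D_{s,t}h)+\sum_{s>k}M_s(D_{s,t}h)$ and telescope $\mathbb Eg(H_t)-\mathbb Eg(H_0)$ in $k$. At the $k$-th step, condition on $\bv{X}_{k-1}$ and on the Gaussians $z_{1:k-1}$; then $M_k(D_{k,t}h)$ becomes a sum of independent bounded random variables with conditional variance $V_k=n^{-1}\sum_i(D_{k,t}h)_i^2 P_{i,k-1}(\bv{X}_{k-1})(1-P_{i,k-1}(\bv{X}_{k-1}))$. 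A Stein bound for sums of independent bounded variables in the $L^q$ metric, applied to test functions of the form $g(\cdot+c)$ with the independent shift $c$ frozen, yields an error of order $\|D_{k,t}h\|_\infty^{4-1/q}V_k^{-(4-2/q)/2}n^{-1/2}\sqrt{1+\log n}$, the logarithm arising from the smoothing needed to interpolate the $L^q$ metric between the Wasserstein and Kolmogorov endpoints. Replacing $V_k$ by $\sigma_k^2[D_{k,t}h]$ costs a further error controlled by the moments of Step~2 through a first-order Taylor expansion of $P_{i,k-1}$ about $\bv p_{k-1}$. Using $\|D_{s,t}h\|_\infty\le\|h\|_\infty\prod_{r=s}^{t-1}(1+\alpha_r)\le\|h\|_\infty e^{\alpha_{s,t}}$ (up to a harmless constant) converts the iterated-Jacobian norms into $e^{(4-1/q)\alpha_{s,t}}$, and summing over $s$ produces the stated bound.

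\textbf{Main obstacles.} The principal difficulty is implementing the Stein step uniformly in $q\in[1,\infty]$: the duality with $\|g'\|_r\le 1$ is not a standard target for the Stein equation, so a self-contained $L^q$-interpolation of the Berry-Esseen/Wasserstein bound for conditionally independent Bernoulli sums must be proved, and this is the source of the $\sqrt{1+\log n}$ factor and of the unusual exponents $4-1/q$ and $4-2/q$. A secondary but substantial challenge is arranging the moment propagation sharply enough to obtain the precise form of $\kappa_s$: naive Gronwall-type iteration would give worse exponents, and the factor $e^{16\alpha_{s,t}}$ suggests that several layers of Cauchy-Schwarz applied to the martingale differences $M_s$, rather than direct $L^2$ norms of $\bv X_s-\bv p_s$, are needed to keep the dependence on $\alpha_s$ within the right power of the exponential.
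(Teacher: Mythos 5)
Your plan shares the paper's three main ingredients (a telescoping hybrid, a conditional one-step Stein estimate, and moment control of $\bv{X}_s-\bv{p}_s$ via the independent-node coupling), and the exponent bookkeeping converting $\|D_{s,t}h\|_\infty$ into $e^{(4-1/q)\alpha_{s,t}}$ is correct. However, the forward hybrid as you set it up breaks. With $H_k=\sum_{s\le k}W_s(D_{s,t}h)+\sum_{s>k}M_s(D_{s,t}h)$, you propose to condition on $\bv{X}_{k-1}$ and $z_{1:k-1}$ and then invoke a Stein bound against "the independent shift $c$ frozen"; but the remaining term $\sum_{s>k}M_s(D_{s,t}h)$ is \emph{not} conditionally independent of $M_k(D_{k,t}h)$ given $\bv{X}_{k-1}$, since each $M_s$ with $s>k$ is built on $\bv{X}_k,\ldots,\bv{X}_{s-1}$, which are downstream of the very transition producing $M_k$. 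The one-dimensional conditional Stein step therefore cannot be applied. The paper avoids this by constructing the hybrids $\zeta_t^{(s)}$ from the most recent transition backwards: by the contraction identity~(\ref{eq:Contraction}) (Young's inequality for convolutions), the already-Gaussianized noise $\sum_{r>t-s}\sigma_r[D_{r,t}h]z_r$ is independent of everything and can be peeled off, reducing the $s$-th telescoping step to a comparison of $\langle\zeta_{t-s},D_{t-s,t}h\rangle$ against $\langle\tilde\zeta_{t-s},D_{t-s,t}h\rangle$, for which conditioning on $\bv{X}_{t-s-1}$ freezes the past exactly and leaves only a genuinely independent Gaussian. This backward, contraction-based structure is the key idea your proposal is missing.

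Two further issues. Discarding the Taylor remainders $\sum_sE_s(D_{s,t}h)$ upfront and charging them to $L^1$ does not transfer cleanly into the $L^q$ metric for $q>1$ (at the Kolmogorov endpoint the dual constraint is $\|g'\|_1\le 1$, under which an $L^1$-small perturbation of the argument is not directly controllable). The paper instead absorbs this error inside the Stein operator as the mean shift $|\mathbb{E}_t\langle\zeta_{t+1},h\rangle-\langle\zeta_t,D_th\rangle|$, bounded via Proposition~\ref{prop:LinEstimate} to give~(\ref{eq:MeanApprox}), with the variance-matching error~(\ref{eq:SigmaApprox}) handled by Theorem~\ref{thm:LqrErrorBounds}; the regularity estimates for the Stein solution in Lemma~\ref{lem:Stein} then carry these errors through uniformly in $q$. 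Finally, your attribution of the $\sqrt{1+\log n}$ factor to smoothing/interpolation between the $q=1$ and $q=\infty$ endpoints is incorrect: the paper states explicitly (Remark~\ref{rem:MeanField}) that the logarithm comes only from~(\ref{eq:MeanApprox}), i.e.\ from the Orlicz/maximum-of-derivatives step inside Proposition~\ref{prop:LinEstimate}; the interpolation in $q$ itself contributes no logarithmic loss.
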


The logarithmic factor in (\ref{eq:GenWasserBound}) seems to arise only
at complete generality. For many special cases, including certain
mean-field models, the order may be improved to the optimal 
rate of convergence $\bigO(n^{-1/2})$ with minimal effort.
An obvious example is the case where each node transitions without interactions 
(so that $\psi_t = 0$), as a consequence of the Berry-Esseen bound
\citep[Theorem 3.6, Corollary 4.2]{Chen:2010aa}. 
We explore this idea further in Remark~\ref{rem:MeanField}. 

As an important corollary, we
provide a general central limit result for a subclass of approximable
occupancy processes. Consider a sequence of occupancy
processes $\{\bv{X}_t^\n\}_{n=1}^{\infty}$, $t=0,1,\dots,$ with
corresponding global rules $\{P_t^\n\}_{n=1}^{\infty}$ with sequences
$\{\alpha_t^\n,\psi_t^\n,\Gamma_t^\n,\delta_t^\n\}_{n=1}^{\infty}$ for
each $t \geq 0$,
and one-step variances
$\{\sigma_{n,t}^2[h]\}_{n=1}^{\infty}$. 
Extending $\langle \zeta_t, h \rangle$ to
$h \in \ell^{\infty}$ in the obvious way, we obtain

\begin{corollary}
\label{cor:CentralLim}
Let $t \geq 1$, and suppose that
$\sup_n \alpha_s^\n < \infty$, $\sup_n \delta_s^\n < \infty$,
$\beta_s^\n = \bigO(n^{-1/2})$, and $\Gamma_s^\n = \bigO(n^{-1})$ as
$n \to \infty$, for each $s \leq t$. Suppose also that,
for each $s\leq t$, there 
are continuous functions $\sigma_s^2:\,\ell^{\infty}\to[0,\infty)$ and
$\mathcal{J}_s:\,\ell^{\infty}\to\ell^{\infty}$ such that
$\sigma_{n,s}^2[h] \to \sigma_s^2[h]$ as $n\to\infty$ and
\begin{equation}
\lim_{n\to\infty} \sum_{i=1}^n h_i \partial_j P_{i,s}^\n(\bv{p}_s) = 
(\mathcal{J}_s h)_j,
\end{equation}
for every $j=1,2,\dots$ Then, for any $h_1,\dots,h_m \in \ell^{\infty}$ and
$t_1,\dots,t_m$,
\[
(\langle\zeta_{t_1}^\n,h_1\rangle,\dots,\langle\zeta_{t_m}^\n, h_m\rangle) 
\convd \mathcal{N}(0,(\Sigma_{t_i,t_j}[h_i,h_j])_{i,j=1}^m),
\]
where
\[
\Sigma_{s,u}[h_s,h_u] = \begin{cases}
\mathcal{V}_s[h_s,\mathcal{J}_s\circ\cdots
\mathcal{J}_{u-1}h_u], & \quad s \leq u \\
\Sigma_{u,s}[h_u,h_s], & \quad s > u, \\
\end{cases}
\]
and $\mathcal{V}_t:\,\ell^{\infty}\times\ell^{\infty}\to[0,\infty)$ 
is defined by
$\mathcal{V}_0[h_1,h_2] = 0$ and
\[
\mathcal{V}_{t+1}[h_1,h_2] = \sigma_{t+1}[h_1,h_2] + 
\mathcal{V}_t[\mathcal{J}_t h_1,\mathcal{J}_t h_2], \quad t \geq 0,
\] 
with $\sigma_t[h_1,h_2] = \frac12 (\sigma_t^2[h_1+h_2]-\sigma_t^2[h_1-h_2])$.
\end{corollary}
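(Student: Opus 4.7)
The plan is to combine two ingredients: the autoregressive Gaussian process $\xi^\n$ already satisfies its own multivariate central limit theorem by explicit covariance convergence, and Theorem~\ref{thm:WasserBound} shows that $\langle \zeta_t^\n,h\rangle$ is close in law to $\langle \xi_t^\n,h\rangle$ at each single time.

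First, I would handle the Gaussian side. The random vector $(\langle \xi_{t_k}^\n,h_k\rangle)_{k=1}^m$ is centred Gaussian for every $n$, so joint convergence in distribution reduces to convergence of the covariance matrix. Starting from the identity
\[
\cov[\langle \xi_s^\n,h\rangle,\langle \xi_u^\n,h'\rangle]=\sum_{r=1}^{s\wedge u}\sigma_{n,r}[D_{r,s}^\n h,D_{r,u}^\n h']
\]
quoted in the excerpt, I would polarise the assumed convergence $\sigma_{n,s}^2\to\sigma_s^2$ to obtain $\sigma_{n,s}[h,h']\to\sigma_s[h,h']$, and then iterate the hypothesis $(D_s^\n h)_j=\sum_i h_i\partial_j P_{i,s}^\n(\bv{p}_s)\to(\mathcal{J}_s h)_j$, appealing to the assumed continuity of $\mathcal{J}_s$ on $\ell^\infty$ together with the operator-norm control coming from $\sup_n\alpha_s^\n<\infty$, to deduce $D_{r,t}^\n h\to \mathcal{J}_r\circ\cdots\circ\mathcal{J}_{t-1}h$ componentwise. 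Substituting and unrolling the recursion defining $\mathcal{V}_t$ yields $\cov[\langle \xi_{t_i}^\n,h_i\rangle,\langle \xi_{t_j}^\n,h_j\rangle]\to\Sigma_{t_i,t_j}[h_i,h_j]$ exactly as in the statement.

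Second, apply Theorem~\ref{thm:WasserBound} (say with $q=1$) to each marginal pair. The bookkeeping is: since $\gamma_s^\n=n^{-1}\sum_{i,j}\|\partial_j^2 P_{i,s}^\n\|_\infty\le\Gamma_s^\n=\bigO(n^{-1})$, together with $\beta_s^\n\sqrt{n}=\bigO(1)$, one gets $\psi_s^\n\sqrt{n}=\bigO(1)$; the denominators $\sigma_{s+1,n}^{2}[D_{s+1,t}^\n h]$ converge to positive limits by step one (assuming nondegeneracy of the limiting variance); and under the remaining scaling assumptions the prefactor $\kappa_s^\n\sqrt{(1+\log n)/n}$ in~(\ref{eq:GenWasserBound}) vanishes. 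Hence $\|\law\langle \zeta_{t_k}^\n,h_k\rangle-\law\langle \xi_{t_k}^\n,h_k\rangle\|_1\to 0$ for each $k$.

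Third, lift marginal convergence to joint convergence via the Cram\'er--Wold device. For arbitrary $c\in\mathbb{R}^m$, step one gives $\sum_k c_k\langle \xi_{t_k}^\n,h_k\rangle\convd\mathcal{N}(0,c^\top\Sigma c)$, so by Slutsky it suffices that $\sum_k c_k(\langle \zeta_{t_k}^\n,h_k\rangle-\langle \xi_{t_k}^\n,h_k\rangle)\convp 0$. This is the main obstacle: Theorem~\ref{thm:WasserBound} furnishes only marginal Wasserstein bounds, and these cannot be combined across distinct times without a joint coupling of the two processes. I would close this gap by adapting the Stein's method argument underlying Theorem~\ref{thm:WasserBound} to the linear functional $\bv{X}\mapsto n^{-1/2}\sum_k c_k\langle \bv{X}_{t_k}-\bv{p}_{t_k},h_k\rangle$ of the trajectory up to time $t_m$: this is still a linear observable of a conditionally product-Bernoulli Markov chain, so the induction-on-$t$ structure and the derivative estimates behind~(\ref{eq:GenWasserBound}) should carry over with only notational adjustments, producing the joint control needed.
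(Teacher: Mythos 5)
Your overall blueprint---Cram\'er--Wold, identify the limit covariance on the Gaussian side, compare $\zeta$ with $\xi$ via a Wasserstein-type bound derived from Stein's method---matches the paper's strategy in spirit. However, there are two genuine gaps, and you have located them but not closed them.

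The first is the degeneracy issue, which you attempt to dispose of parenthetically by "assuming nondegeneracy of the limiting variance." The hypotheses of Corollary~\ref{cor:CentralLim} do not guarantee $\sigma_s[h]>0$, and the bound~(\ref{eq:GenWasserBound}) blows up as $\sigma_{s+1}[D_{s+1,t}h]\to 0$. The paper confronts this head-on: it observes that the one-step bound in Proposition~\ref{prop:OneStep} must be interpolated with a crude estimate
$|\mathbb{E}g(\langle\zeta_{t+1},h\rangle)-\mathbb{E}g(\langle\tilde\zeta_{t+1},h\rangle)|\le C\|g'\|_\infty\bigl(\sigma_{t+1}[h]+\|h\|_\infty\kappa_t n^{-1/4}\sqrt{1+\log n}\bigr)$,
which \emph{improves} as $\sigma_{t+1}[h]\to 0$. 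Taking the minimum of the two bounds eliminates the denominator at the cost of the suboptimal rate $n^{-1/6}$, which is all that a qualitative CLT needs. Without this device your step two does not go through in general.

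The second is the joint convergence, which you correctly identify as "the main obstacle" but then wave away as carrying over "with only notational adjustments." This is the crux of the proof and is nontrivial: applying Stein's method to $n^{-1/2}\sum_k c_k\langle\bv{X}_{t_k}-\bv{p}_{t_k},h_k\rangle$ directly would require treating the whole trajectory $\bv{X}_1,\dots,\bv{X}_{t_m}$ as the conditioning variable and reworking all the moment estimates. The paper instead proves Proposition~\ref{prop:GlobalBound}: it bounds
$\bigl|\mathbb{E}g\bigl(\sum_{s\le t}\langle\zeta_s,h_s\rangle+\langle\zeta_{t+1},h_{t+1}\rangle\bigr)-\mathbb{E}g\bigl(\sum_{s\le t}\langle\zeta_s,h_s\rangle+\langle\tilde\zeta_{t+1},h_{t+1}\rangle\bigr)\bigr|$
by exactly the same Stein argument, now conditioning on $\bv{X}_1,\dots,\bv{X}_t$ rather than only $\bv{X}_t$, and then runs an induction on $t$ using the contraction identity~(\ref{eq:Contraction}): the definition $\langle\tilde\zeta_{t+1},h_{t+1}\rangle=\langle\zeta_t,D_t h_{t+1}\rangle+\sigma_{t+1}[h_{t+1}]z_{t+1}$ folds the $\zeta_t$-dependence into an updated test vector $h_t+D_t h_{t+1}$ at time $t$, while the independent Gaussian $z_{t+1}$ is stripped off by contraction, reducing to the $t$-step case. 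This carefully structured one-step replacement is what actually delivers the joint control; your plan names the target correctly but leaves the mechanism unspecified.

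Your Gaussian-side covariance computation (step one) is essentially the same observation the paper makes at the end of its proof and is fine.
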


Classical examples of models satisfying the assumptions of Corollary~\ref{cor:CentralLim}
include those which involve
any form of mean-field assumption.
In fact, for homogeneous systems, if $\alpha_t$ is
bounded away from zero, then to satisfy the assumptions
of Corollary~\ref{cor:CentralLim}, each local rule must
depend on a non-zero proportion of the whole system.
Of course, there are many notable types of occupancy processes in the
literature that do not satisfy these assumptions.
In particular, this effectively rules out the
majority of integrable probabilistic cellular automata (which are characterised by strict locality~\citep{Dobrushin:1990aa}) from our analysis. However, we do not expect even the deterministic process to be representative in these cases.
As an example, consider the Domany-Kinzel probabilistic cellular automata (PCA)
on the discrete torus of length $n$:
\[
P_i(\bv{x}) = (q_2 - q_1)x_i x_{i+1} + q_1 (1 - x_i) x_{i+1},\quad q_1,q_2 \in [0,1],
\]
where $x_{n+1} := x_1$. The application of the deterministic
approximation (\ref{eq:Detalternative}) (called the \emph{one-site mean-field
approximation} in the relevant literature) to this system suggests the
existence of a phase transition at $q_1 = \frac12$. On the other hand, a
two-site approximation (see \citep[\S15.2]{TO:15}) suggests the location of the phase transition along $q_1$ varies
according to $q_2$ -- at $q_2 = 0$, the transition occurs at $q_1 = \frac23$.
Indeed, the behaviour of the Domany-Kinzel PCA has been determined quite
well via numerics; at $q_2 = 0$, the phase transition has been estimated
at $q_1 \approx 0.8$ to at least three decimal precision \citep{Zebende:1994aa}. The approximation
does not fare better in the short term either. If the $X_{i,0}$ are independent
and identically distributed with mean $p_0$, $\bv{X}_t$ is an exchangeable
random vector for each $t=1,2,\dots.$ In this case, one can verify by direct
calculation that, for any $n$, 
\[
\mathbb{E}\langle \zeta_2, h\rangle = n^{1/2} \bar{h}
(q_2 - 2q_1)^2 p_0^2 (1 - p_0)(q_1+q_2-2p_0 q_1)
\]
(where $\bar{h}$ is the
arithmetic mean of $h \in \mathbb{R}^n$), which will often diverge as $n \to \infty$. 

While the quality of the normal approximation in the short-term
has been established for a large class of occupancy processes, 
it is the approximation of the \emph{long-term
behaviour} of the process that is often more useful in the
population sciences. 
For this we require time-homogeneity, and, for each $n$, 
the deterministic process converges to some fixed point as $ t \to \infty$ (a set of convenient monotonicity conditions which implies this is provided in Theorem~\ref{thm:SmithStability}).
Under readily verifiable conditions on the global rule,
a time-homogeneous occupancy process
centered about its deterministic approximation,
converges in $\bigO(\log n)$ time to an approximately normal equilibrium (Corollary~\ref{cor:LongTerm}). 
\begin{corollary}
\label{cor:LongTerm}
Assume the conditions of Corollary~\ref{cor:CentralLim} hold, and
that \linebreak $\limsup_n \|DP^\n(\bv{p}_{\infty}^\n)\|_1 < 1$, 
and
$\sup_n \|\bv{p}_t^\n - \bv{p}_{\infty}^\n\|_{\infty} \to 0$
as $t \to \infty$. Then there exists a constant $c > 0$ independent
of $n$ such that for any sequence $\tau_n \leq c \log n$ with
$\tau_n \to \infty$ as $n\to\infty$,
\[
\langle \zeta_{\tau_n}^\n, h \rangle \convd \mathcal{N}(0,\mathcal{V}_{\infty}^2[h]),
\]
where $\mathcal{V}_{\infty}^2[h] = \lim_{t\to \infty} \mathcal{V}_t[h,h]$.
\end{corollary}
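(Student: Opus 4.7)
The plan is to combine the non-asymptotic Wasserstein estimate of Theorem~\ref{thm:WasserBound} with a direct variance analysis of the Gaussian companion process $\bv{Z}_t^\n$. First, I would apply Theorem~\ref{thm:WasserBound} at $q=1$ to bound $\|\law\langle\zeta_{\tau_n}^\n,h\rangle - \law\langle\xi_{\tau_n}^\n,h\rangle\|_1$. Under the hypotheses inherited from Corollary~\ref{cor:CentralLim}, $\alpha^* := \sup_{s,n}\alpha_s^\n < \infty$, $n\Gamma_s^\n = O(1)$, $\psi_s^\n\sqrt{n} = O(1)$ (since $\gamma_s^\n \leq \Gamma_s^\n$), and $n^{1/2}\delta_s^\n = O(\sqrt{n})$, from which $\kappa_s \leq K\sqrt{n}\,s^2 e^{16\alpha^* s}$ for some $K$ independent of $n$. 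The extra hypothesis $\limsup_n\|DP^\n(\bv{p}_\infty^\n)\|_1 < 1$, combined with the uniform convergence $\bv{p}_t^\n\to\bv{p}_\infty^\n$, yields a threshold $T$ and a rate $\rho' \in (0,1)$ such that $\|D_s^\n\|_1 \leq \rho'$ for every $s \geq T$ and large $n$. This simultaneously controls the denominator $\sigma_{s+1}^2[D_{s+1,\tau_n}^\n h]$ from below (together with the positivity of $p_{i,s}^\n(1-p_{i,s}^\n)$ implicit in a non-degenerate $\mathcal{V}_\infty^2[h]$) and provides the geometric decay used in the second step. For $\tau_n \leq c\log n$, every factor in the resulting bound grows at most like $n^{c_1\alpha^* c}$ for an explicit constant $c_1$, so choosing $c$ small enough makes the outer $\sqrt{(1+\log n)/n}$ dominate the sum and force the bound to zero.

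The second step exploits the exact Gaussianity of $\langle\xi_{\tau_n}^\n,h\rangle$, which by construction is centered Gaussian with variance $v_n = \sum_{r=1}^{\tau_n}\sigma_{n,r}^2[D_{r,\tau_n}^\n h]$, so the task reduces to showing $v_n \to \mathcal{V}_\infty^2[h]$. Reindexing by $k = \tau_n - r$ and using time-homogeneity with $\bv{p}_s^\n \to \bv{p}_\infty^\n$, each summand is asymptotically $\sigma_\infty^2[(D^\n)^k h]$. The uniform contraction for $s \geq T$ gives an $n$-independent geometric bound $\sigma_{n,\cdot}^2[D_{\tau_n-k,\tau_n}^\n h] \leq C\rho'^{2k}\|h\|_\infty^2$ for large $k$, rendering the tail uniformly summable, while for each fixed $k$ the pointwise convergences $\sigma_{n,s}^2 \to \sigma_s^2$ and $D_s^\n h \to \mathcal{J}h$ supplied by Corollary~\ref{cor:CentralLim} identify the limiting contribution. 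Dominated convergence then yields $v_n \to \sum_{k=0}^\infty \sigma_\infty^2[\mathcal{J}^k h] = \mathcal{V}_\infty^2[h]$, the right-hand side also coinciding with $\lim_t \mathcal{V}_t[h,h]$ by the same geometric argument applied to the defining recursion. Combining the two steps concludes $\langle\zeta_{\tau_n}^\n,h\rangle \convd \mathcal{N}(0,\mathcal{V}_\infty^2[h])$.

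I expect the principal obstacle to lie in the first step: balancing the exponentially growing $e^{3\alpha_{s,\tau_n}}$ in the numerator against the potentially small denominator $\sigma_{s+1}^2[D_{s+1,\tau_n}^\n h]$ uniformly across $s$. The intended remedy is to split the sum at $s \sim T$, bounding the denominator below via strict positivity of $p_{i,s}^\n(1-p_{i,s}^\n)$ for $s < T$ and via the controlled contraction of $D^\n$ for $s \geq T$; the exponential blow-up $e^{3\alpha^*(\tau_n - s)}$ is then offset by the geometric smallness of $\sigma^2[D_{s+1,\tau_n}^\n h]^{-1}$ paired with the corresponding $\rho'^{\,-2(\tau_n - s)}$. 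The admissible constant $c$ in the statement is extracted as an explicit function of $\alpha^*$, $\rho'$ and the structural constants entering $\kappa_s$.
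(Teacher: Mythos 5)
Your proposal correctly identifies the overall strategy — compare $\langle\zeta_{\tau_n}^\n,h\rangle$ to the Gaussian companion, then send $\tau_n\to\infty$ — and your second step (identifying $\lim_n\var\langle\xi_{\tau_n}^\n,h\rangle=\mathcal V_\infty^2[h]$ by dominated convergence using the uniform contraction) is essentially the content of Lemma~\ref{lem:UnifConvT} in the paper, packaged slightly differently. However, the first step contains a genuine gap that your ``intended remedy'' does not actually close.

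The problem is that Theorem~\ref{thm:WasserBound} is not the right tool here, precisely because of the denominator $\sigma_{s+1}^{4-2/q}[D_{s+1,t}h]$. Under the extra hypothesis $\limsup_n\|DP^\n(\bv p_\infty^\n)\|_1<1$, the products $D_{s+1,t}^\n$ \emph{contract}: $\|D_{s+1,t}^\n h\|_\infty\leq C\rho^{2(t-s)}\|h\|_\infty$ (equation~(\ref{eq:DMatBounds})). Consequently $\sigma_{s+1}^2[D_{s+1,t}^\n h]$, which is bounded above by $\tfrac14\|D_{s+1,t}^\n h\|_\infty^2$, can only go \emph{down} geometrically in $t-s$, so its reciprocal \emph{blows up} geometrically. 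Your remedy asserts the opposite — that the contraction gives a lower bound on the denominator and that ``the geometric smallness of $\sigma^2[\cdot]^{-1}$'' offsets the $e^{3\alpha^*(\tau_n-s)}$ factor — but the inverse of a geometrically small quantity is geometrically large, and the positivity of $p_{i,s}(1-p_{i,s})$ yields at best $\sigma_{s+1}^2[g]\geq\epsilon\|g\|_2^2/n$, which does not prevent the ratio $\|g\|_\infty^3/\sigma_{s+1}^2[g]$ from degenerating when $g=D_{s+1,t}h$ becomes small or sparse. So when you split the sum in~(\ref{eq:GenWasserBound}) at $s\sim T$, the terms with $s$ bounded and $t-s\to\infty$ diverge rather than die out, and no choice of $c$ in $\tau_n\leq c\log n$ can rescue the bound: the denominator degeneracy is inherent, not a rate issue. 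This is exactly why the paper routes the argument through Proposition~\ref{prop:GlobalBound}, which is obtained by \emph{interpolating} Proposition~\ref{prop:OneStep} with a ``crude'' one-step estimate of the form $C\|g'\|_\infty(\sigma_{t+1}[h]+\|h\|_\infty\kappa_t n^{-1/4}\sqrt{1+\log n})$ that is monotone in (rather than inverse in) $\sigma_{t+1}[h]$; the balancing gives a $\sigma$-free bound of order $n^{-1/6}\sqrt{1+\log n}\,\kappa_t$ per step. Summing over $\tau_n\leq c\log n$ steps and using Lemma~\ref{lem:UnifConvT} for the tail then yields the corollary for $c$ small enough. To fix your proof, replace the appeal to Theorem~\ref{thm:WasserBound} with Proposition~\ref{prop:GlobalBound}; your second step can then be kept essentially as written.
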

Unfortunately, we have not been able to prove a central limit theorem for
the process centered about the deterministic equilibrium for
arbitrary initial values of the process. Such a result would require
much stronger assumptions on the rate of convergence of the deterministic
approximation to equilibrium than we impose. 


The remainder of this document is structured as follows.
First, in \S\ref{sec:Coupling}, as a critical precursor to Theorem
\ref{thm:WasserBound}, the quality of the deterministic approximation (\ref{eq:Detalternative}) is considered, culminating in a functional error
bound in Theorem \ref{thm:LqrErrorBounds}. An important consequence of this
is a general uniform law of large numbers (Corollary \ref{cor:LawLargeNums}),
which we consider interesting in its own right. Due to its versatility, we choose to express our result in terms of the Rademacher complexity, defined by
\[
\Rad(\mathcal{H}) = \mathbb{E}\sup_{h \in \mathcal{H}} \frac1n
\sum_{i=1}^n h_i \sigma_i,
\]
where $\sigma_1,\dots,\sigma_n$ are independent Rademacher random variables $[\mathbb{P}(\sigma_i = 1) = \mathbb{P}(\sigma_i = -1) = \frac12]$. 
Rademacher complexity relates to other measures of the size of $\mathcal{H}$ as follows. 
Let $N_r(\mathcal{H})$ denote the $r$-covering number of $\mathcal{H}$, that is, the smallest number of balls of radius $r$ whose union contains $\mathcal{H}$. 
Then~\citep[Lemma 27.4]{Shalev:2014aa},
there exist universal constants $c_1,c_2 > 0$ such that 
\begin{equation}
\label{eq:RadBounds}
\Rad(\mathcal{H}) \leq \frac{c_1}{\sqrt{n}}\int_0^H \sqrt{\log N_r(\mathcal{H})}
\mathd r \leq c_2 H
\sqrt{\frac{\log |\mathcal{H}|}{n}},
\end{equation}
where $H = \sup_{h \in \mathcal{H}} \|h\|_{\infty}<\infty$. 
Furthermore,
if $\mathcal{H}$ is a set of binary vectors, and writing
$\mathscr{V}(\mathcal{H})$ as the Vapnik-Chervonenkis dimension of $\mathcal{H}$
as a set of functions from $\{1,\dots,n\}$ into $\{0,1\}$ (see
\citep[Definition 6.5]{Shalev:2014aa}), then \citep[Theorem 1]{Haussler:1995aa}
there exists $c_3 > 0$ such that 
\[
\Rad(\mathcal{H}) \leq c_3 \sqrt{\frac{\mathscr{V}(H)}{n}}.
\]
For more details on the Rademacher complexity, we direct the reader to \citep[\S 26,27]{Shalev:2014aa}.
Now, consider a sequence of occupancy processes $\{\bv{X}_t^\n\}_{n=1}^{\infty}$ indexed by number of
nodes with corresponding global rules $\{P_t^\n\}_{n=1}^{\infty}$, 
and sequences
$\{\alpha_t^\n\}_{n=1}^{\infty}$ and $\{\psi_t^\n\}_{n=1}^{\infty}$ for each integer $t \geq 0$. 

\begin{corollary}
\label{cor:LawLargeNums}
Consider a sequence of subsets $ \mathcal{H}_{n} \subset \mathbb{R}^{n} $ such that $ \mathrm{Rad}(\mathcal{H}_{n}) \to 0 $ and $ \sup_{n} \sup_{h\in\mathcal{H}_{n}} \|h\|_{\infty} < \infty $. Suppose that $ \sup_{n} \alpha_{s}^{(n)} < \infty $ for all $ s\leq t $. Then, as $ n \to \infty $,
\begin{enumerate}[label=(\alph*)]
\item \label{enu:LLNConvP} $\sup_{h \in \mathcal{H}_{n}} n^{-1} \sum_{i=1}^n h_i (X_{i,t}^\n - p_{i,t}^\n) \convp 0$ if $\psi_s^\n \to 0$ for all $s \leq t$;
\item \label{enu:LLNConvAS} $ \sup_{h \in \mathcal{H}_{n}} n^{-1} \sum_{i=1}^n h_i (X_{i,t}^\n - p_{i,t}^\n) \convas 0$ if $\{\psi_s^\n\}_{n=1}^{\infty} \in \ell^q$ for all $s\leq t$ and some $q$.
\end{enumerate}
\end{corollary}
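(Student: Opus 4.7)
My plan is to derive both statements directly from Theorem~\ref{thm:LqrErrorBounds}, which bounds the functional deviation
\[
\sup_{h \in \mathcal{H}_{n}}\Bigl|\frac{1}{n}\sum_{i=1}^{n} h_{i}(X_{i,t}^{(n)} - p_{i,t}^{(n)})\Bigr|
\]
in terms of $\Rad(\mathcal{H}_{n})$, the horizon $t$, and the driving quantities $\alpha_{s}^{(n)}$ and $\psi_{s}^{(n)}$ for $s<t$. Because the corollary is flagged as a consequence of that theorem, the task here is essentially one of verification: checking that each of its two hypotheses plugs into the appropriate version of the moment bound.

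The underlying mechanism (which drives Theorem~\ref{thm:LqrErrorBounds}) is a recursive decomposition at each time step,
\[
X_{i,s+1}^{(n)}-p_{i,s+1}^{(n)} = \bigl[X_{i,s+1}^{(n)}-P_{i,s}^{(n)}(\bv{X}_{s}^{(n)})\bigr] + \bigl[P_{i,s}^{(n)}(\bv{X}_{s}^{(n)})-P_{i,s}^{(n)}(\bv{p}_{s}^{(n)})\bigr].
\]
Conditioned on the past, the first bracket is a sum of independent centred bounded random variables, whose supremum over $\mathcal{H}_{n}$ is controlled by $\Rad(\mathcal{H}_{n})$ through a standard symmetrization-and-contraction argument. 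A Taylor expansion of the second bracket produces a linear piece whose iterated operator norm is tame under $\sup_{n}\alpha_{s}^{(n)}<\infty$, plus a quadratic remainder governed by $\psi_{s}^{(n)}$. Iterating over $s<t$ produces a moment estimate of the shape
\[
\mathbb{E}\,\sup_{h \in \mathcal{H}_{n}}\Bigl|\frac{1}{n}\sum_{i=1}^{n} h_{i}(X_{i,t}^{(n)} - p_{i,t}^{(n)})\Bigr|^{q} \leq C_{t,q}\Bigl(\Rad(\mathcal{H}_{n})^{q} + \sum_{s<t}(\psi_{s}^{(n)})^{q}\Bigr),
\]
with constants depending only on $t$, $q$ and the uniform bounds on the $\alpha$-sequences.

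For part~\ref{enu:LLNConvP}, the $q=1$ version of this estimate gives $L^{1}$ convergence to zero under the hypotheses $\Rad(\mathcal{H}_{n})\to 0$ and $\psi_{s}^{(n)}\to 0$, and Markov's inequality upgrades this to convergence in probability.

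For part~\ref{enu:LLNConvAS}, I would apply the same estimate at the value of $q$ for which $\{\psi_{s}^{(n)}\}_{n}\in\ell^{q}$. The hypothesis makes $\sum_{n}(\psi_{s}^{(n)})^{q}$ finite for each $s<t$, and $\ell^{q}$-summability of the Rademacher contribution is obtained by noting that the symmetrization step in Theorem~\ref{thm:LqrErrorBounds} delivers the empirical-process term at the natural scale $n^{-1/2}\Rad(\mathcal{H}_{n})$ (cf.\ the first inequality in (\ref{eq:RadBounds})), which is summable whenever $q\geq 2$; for smaller $q$ one may interpolate by running the bound at a larger moment. A Markov/Borel--Cantelli argument then yields almost sure convergence. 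The main obstacle is precisely this summability question for the Rademacher contribution: the raw hypothesis $\Rad(\mathcal{H}_{n})\to 0$ is not by itself enough, and the argument relies on extracting the additional $n^{-1/2}$ that the martingale-difference/symmetrization step naturally provides.
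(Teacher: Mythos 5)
The proposal has two genuine problems, one of which you flag yourself but do not successfully resolve.

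First, Theorem~\ref{thm:LqrErrorBounds} does not bound the supremum over $h \in \mathcal{H}_n$; it is a moment bound for a \emph{single} function $f$. The uniform estimate requires a separate symmetrization-and-Rademacher argument (combined with a McDiarmid tail bound for the independent-node coupling $\bv{W}_t$ and a moment bound for $\bar J_t = n^{-1}\sum_i J_{i,t}$), which the paper carries out in Corollary~\ref{cor:Concentration}; Corollary~\ref{cor:LawLargeNums} is then read off from that concentration inequality, not from Theorem~\ref{thm:LqrErrorBounds} directly. Your recursive ``martingale plus linearization'' decomposition is a legitimate alternative to the paper's coupling argument, and for part~\ref{enu:LLNConvP} (with $q=1$ plus Markov) your sketch would plausibly go through once the supremum is handled by symmetrization. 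So the deviation in mechanism is not the main issue.

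The real gap is in part~\ref{enu:LLNConvAS}. Your proposed moment inequality
$\mathbb{E}\sup_{h}|n^{-1}\sum_i h_i(X_{i,t}-p_{i,t})|^q \leq C(\Rad(\mathcal{H}_n)^q + \sum_{s<t}(\psi_s^\n)^q)$,
fed through Markov and Borel--Cantelli, requires $\sum_n \Rad(\mathcal{H}_n)^q < \infty$, which the hypothesis $\Rad(\mathcal{H}_n)\to 0$ does not give. You try to repair this by ``extracting the additional $n^{-1/2}$ that the symmetrization step naturally provides,'' but there is no such extra factor. The quantity $\Rad(\mathcal{H}_n) = \mathbb{E}\sup_h n^{-1}\sum_i h_i\sigma_i$ already carries the $1/n$ normalization; the factor $n^{-1/2}$ in~(\ref{eq:RadBounds}) is an \emph{upper bound} on $\Rad$ in terms of the entropy integral, valid only when that integral is controlled, not a multiplicative factor you can peel off the hypothesis $\Rad(\mathcal{H}_n)\to 0$. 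The correct repair is structural: treat $\Rad(\mathcal{H}_n)$ as a deterministic, vanishing \emph{centering} rather than as the scale of the tail. This is exactly what Corollary~\ref{cor:Concentration} does --- the event is $\{\sup_h|\langle\bar{\bv X}_t-\bar{\bv p}_t,h\rangle| > Ht\Psi_t x + \Rad(\mathcal{H}_n)\}$, so $\Rad(\mathcal{H}_n)$ sits on the event side, where it only needs to go to zero (no summability), while the probability bound involves only the $\Psi_t x$ part, for which the $\ell^q$ hypothesis on $\psi$ and the exponential McDiarmid tail supply summability. Equivalently: decompose $\sup_h|\langle \bar{\bv X}_t-\bar{\bv p}_t,h\rangle| \le \sup_h|\langle \bar{\bv W}_t-\bar{\bv p}_t,h\rangle| + H\bar J_t$; the first term has mean at most $\Rad(\mathcal{H}_n)\to 0$ and concentrates around its mean at scale $n^{-1/2}$ by McDiarmid, while the second has $q$-th moments of order $(\psi^\n)^q + n^{-q}$ which are summable under the $\ell^q$ hypothesis (taking $q>1$, which we may). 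Your raw-moment route cannot produce this because it never separates the centering from the fluctuation.
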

In particular, the sequence $\{h_n\}_{n=1}^{\infty}$ of singletons
$h_n \in \mathbb{R}^n$ satisfies the conditions of Corollary~\ref{cor:LawLargeNums},
provided that $\sup_n \|h_n\|_{\infty} < \infty$.

Next, in  \S\ref{sec:Stein}, we outline the proofs of our main results (Theorem \ref{thm:WasserBound} and Corollary \ref{cor:CentralLim}) using Stein's method and the estimates from \S\ref{sec:Coupling}. The long-term behaviours of an occupancy
process and its approximations are considered
in \S\ref{sec:LongTerm}, culminating in a proof of Corollary~\ref{cor:LongTerm}.
A rate of convergence for Corollary~\ref{cor:LongTerm} is also provided.
Finally, in \S\ref{sec:Examples}, we discuss implications of our main results
in the context of spreading processes from epidemiology (Example~\ref{ex:Spreading}), Hanski's incidence
function model from population ecology (Example~\ref{ex:Hanski}), and dynamic random graph models (Example \ref{ex:RandomGraph}). Furthermore, Example~\ref{ex:Hanski}
demonstrates the implications of Corollary~\ref{cor:LawLargeNums} on the
convergence of empirical random measures associated with an occupancy process,
and Example~\ref{ex:RandomGraph}, in Proposition~\ref{prop:HomoCLT}, shows
how our main results, in conjunction with the approximation (\ref{eq:LinApproxGen}) and Proposition~\ref{prop:LinEstimate}, lead to
normal approximations for non-linear functions of an occupancy process.

\section{\label{sec:Coupling}Quality of the deterministic approximation}

Our first step is to discern when the macroscopic dynamics of the deterministic system reflect those of the stochastic model in some reasonable sense.
Limit theorems under a variety of mean field assumptions have been known for many years; see for example \citep{LeBoudec:2007aa}. More recently, \citep{Barbour:2015aa} considered the problem in a more general framework,  where concentration inequalities were obtained for the approximation error between empirical measures of $\bv{X}_t$ and $\bv{p}_t$ using the method of bounded differences. Their strategy follows that of \citep{Barbour:2008aa} by coupling the occupancy process together with another occupancy process, whose nodes transition \emph{independently}. 
By controlling higher-order moments, we extend their approach to cover the general case and improve on their findings, forming the foundation for the rest of this work.

Let $1\leq q,r \leq \infty$. For a vector $h \in \mathbb{R}^n$, let $\|h\|_q$ denote the traditional  $\ell^q$ norm; for a matrix $A$, $\|A\|_q$ the induced $\ell^q$ matrix norm. For a random variable $X$, $\|X\|_q = (\mathbb{E}|X|^q)^{1/q}$  is the $L^q$ norm, and similarly, for any function $f$ on $\mathbb{R}^d$, $\|f\|_q$ is the $L^q$ norm under Lebesgue measure. For a random vector~$\bv{X}$, we denote the $L^{q,r}$ norm by $\|\bv{X}\|_{q,r} = [\mathbb{E}(\sum_i |X_i|^q)^{r/q}]^{1/r} $. Next, we define the maximal $L^{q,r}$ norm acting on matrices $A = (a_{ij})$ by
\begin{equation}
\label{eq:MaxNormDef}
\|A\|_{q,r} = \begin{cases}
\left[\sum_{i=1}^m (\sum_{j=1}^n |a_{ij}|^q)^{r/q}\right]^{1/r} & \quad \mbox{ if $q \geq r$} \\
\left[\sum_{j=1}^n (\sum_{i=1}^m
|a_{ij}|^r)^{q/r}\right]^{1/q} & \quad \mbox{ if $r > q$,} 
\end{cases}
\end{equation}
with the obvious modifications for $q = \infty$ and $r = \infty$.  By a Minkowski type inequality \citep[Theorem 202]{Hardy:1952aa}, the norm  with the cases  in~(\ref{eq:MaxNormDef}) reversed does not exceed $\|A\|_{q,r}$. If $A$ is 
an $n \times n$ matrix, the special
cases $\|A\|_{2,1}$ and $\|A\|_{1,2}$ are bounded above by $\sqrt{n}
\|A\|_F$,
where $\|\cdot\|_F$ is the Frobenius (or Hilbert-Schmidt) norm. Finally, for notational convenience, for any matrix-valued function  $F = (f_{ij})$ and matrix norm $\|\cdot\|_M$, $\|F\|_M$ shall denote $\|(\|f_{ij}\|_{\infty})\|_M$.

Let $Df = (\partial_j f_i)_{ij}$ be the Jacobian matrix of a vector-valued differentiable function $f$ and let $D^{(2)} f = (\partial_j^2 f_i)_{ij}$ be the corresponding matrix of second derivatives. Our main result for this section
bounds the functional error between $\bv{X}_t$ and $\bv{p}_t$ under the $L^{q,r}$ norm.
\clearpage

\begin{theorem}
\label{thm:LqrErrorBounds}
For any $f = (f_1,\dots,f_m) \in \mathcal{C}^2(\mathbb{R}^n,\mathbb{R}^m)$, 
$q,r \geq 1$, and integers $t \geq 1$ and $s<t$,
\begin{multline}
\label{eq:LqrErrorBound}
\|f(\bv{X}_t)-f(\bv{p}_t)\|_{q,r} \leq 6\sqrt{\pi}\,nr^{3/2} \|Df\|_1 \sum_{s=0}^{t-1} \left(\frac1n + \psi_s\right) e^{4r\alpha_{s,t}}  \\
\qquad + \sqrt{\pi(q+r)} \|Df\|_{2,q} + \tfrac12 \|D^{(2)} f\|_{1,q}.
\end{multline}
\end{theorem}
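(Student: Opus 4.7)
My plan is to introduce an auxiliary random vector $\bar{\bv{X}}_t$ whose entries are independent Bernoullis with means $p_{i,t}$, coupled to $\bv{X}_t$ through common uniforms $U_{i,t}$ so that $X_{i,t} = \ind\{U_{i,t} \leq P_{i,t-1}(\bv{X}_{t-1})\}$ and $\bar{X}_{i,t} = \ind\{U_{i,t} \leq p_{i,t}\}$, and then split the error by the triangle inequality
\[
\|f(\bv{X}_t)-f(\bv{p}_t)\|_{q,r} \leq \|f(\bv{X}_t)-f(\bar{\bv{X}}_t)\|_{q,r} + \|f(\bar{\bv{X}}_t)-f(\bv{p}_t)\|_{q,r}.
\]
The second (``independent'') term will account for the $\sqrt{\pi(q+r)}\|Df\|_{2,q} + \tfrac12\|D^{(2)}f\|_{1,q}$ contribution, while the first (``coupling'') term produces the $\sum_s$ expression with the exponential factor.

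For the independent term, I would telescope one coordinate at a time and Taylor-expand each single-coordinate difference to second order in that coordinate. The first-order piece is then a martingale difference sequence with respect to $\mathcal{F}_i = \sigma(\bar{X}_1,\dots,\bar{X}_i)$, so a vector-valued Burkholder/Marcinkiewicz--Zygmund inequality (applied in the $L^{q,r}$ norm) produces the $\sqrt{\pi(q+r)}\|Df\|_{2,q}$ estimate. The second-order remainder involves \emph{only} the diagonal second derivatives $\partial_i^2 f_k$ (because only one coordinate is perturbed per telescoping step, so no mixed partials appear), and it is bounded deterministically by $\tfrac12\|D^{(2)}f\|_{1,q}$.

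For the coupling term, the componentwise bound $|f_k(\bv{X}_t) - f_k(\bar{\bv{X}}_t)| \leq \sum_i \|\partial_i f_k\|_\infty \ind\{X_{i,t} \neq \bar{X}_{i,t}\}$ introduces the $\|Df\|_1$ factor, and the common-uniforms coupling yields $\mathbb{P}(X_{i,t} \neq \bar{X}_{i,t} \mid \bv{X}_{t-1}) = |P_{i,t-1}(\bv{X}_{t-1}) - p_{i,t}|$. A second-order Taylor expansion of $P_{i,t-1}$ around $\bv{p}_{t-1}$, followed by $L^r$-estimation via Marcinkiewicz--Zygmund on the independent (in the inner Bernoullis) contributions, produces a first-order term controlled by the $\partial_j P_{i,t-1}$ (aggregating into $\beta_{t-1}$ after squaring and summing) and a second-order remainder controlled by the $\partial_j^2 P_{i,t-1}$ (aggregating into $\gamma_{t-1}$), jointly yielding $\psi_{t-1}$; the intrinsic one-step Bernoulli variance supplies the additive $1/n$. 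Iterating over $s = 0, \dots, t-1$ and reapplying the moment inequality at each stage turns the per-step contraction factor into the exponential $e^{4r\alpha_{s,t}}$ and accumulates the $r^{3/2}$ prefactor.

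The main obstacle is the self-referential nature of the recursion: the coupling error at time $t$ is bounded in terms of $L^r$-moments of $\bv{X}_{t-1}-\bv{p}_{t-1}$, which is itself a special case of the theorem being proved (with $f$ the identity). Careful constant tracking is needed so that the per-step multipliers combine cleanly into $6\sqrt{\pi}$ and exponentiate to $e^{4r\alpha_{s,t}}$ without loss, and one must verify that assumption (II) correctly eliminates the pure $\partial_i^2 P_i$ contributions so that only the off-diagonal second derivatives survive in $\gamma_t$. The $L^{q,r}$ norm (rather than a single $L^q$) also requires some additional care, since the telescoping and the coupling bounds must each be compatible with both the coordinate ($\ell^q$) and probabilistic ($L^r$) directions.
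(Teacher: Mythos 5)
Your decomposition is structurally parallel to the paper's -- couple to an independent-Bernoulli surrogate through common uniforms, split into a coupling term plus an independent-concentration term, and close a time recursion -- but the coupling construction itself is genuinely different, and this is the crux of the whole argument. The paper builds a \emph{persistent} process $\bv{W}_t$ that starts at $\bv{X}_0$ and evolves under the local rules evaluated along the deterministic trajectory $\bv{p}_t$: the entries $W_{i,t}$ are independent across $i$ at every $t$, $\mathbb{E}W_{i,t}=p_{i,t}$, and disagreement is tracked \emph{cumulatively} through $J_{i,t}=\max_{s\leq t}\ind\{X_{i,s}\neq W_{i,s}\}$. The whole recursion is then carried by the single scalar $\|\bar J_t\|_r$ (Lemma~\ref{lem:ApproxLemma2}), and the $\ell^q/L^r$ concentration and bias for $f(\bv{W}_t)$ are extracted \emph{once} (Lemma~\ref{lem:ApproxLemma1}, via the method of bounded differences applied to $w_q(\bv{x})=(\sum_i|f_i(\bv{x})-\mathbb{E}f_i(\bv{W}_t)|^q)^{1/q}$ together with the Lindeberg telescope). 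Your $\bar{\bv{X}}_t$ is instead a \emph{fresh} one-step surrogate generated only from $U_{\cdot,t}$ at time $t$; as a result there is no analogue of $\bar J_t$ to recurse on, and you must recurse on the theorem's own conclusion at the previous time, applied to $f=P_{t-1}$ with $q=1$. This can in principle close (the base case is genuine, since $\bv{X}_0$ is deterministic and therefore $\bv{X}_1$ is already a vector of independent Bernoullis), but the independent-concentration and bias terms from each intermediate application get re-incurred at every level of the recursion and must be re-folded into the $\psi_s$ contributions, which makes matching the specific constant $6\sqrt\pi$ and the exponent $e^{4r\alpha_{s,t}}$ considerably messier than the paper's bookkeeping.

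One step in your sketch is not just messy but as written incorrect: when you Taylor-expand $P_{i,t-1}$ about $\bv{p}_{t-1}$ and propose to apply a Marcinkiewicz--Zygmund bound ``on the independent (in the inner Bernoullis) contributions,'' the vector $\bv{X}_{t-1}$ being substituted is \emph{not} a vector of independent Bernoullis once $t\geq 2$. You cannot apply an independence-based moment inequality directly to $\sum_j\partial_j P_{i,t-1}(\bv{p}_{t-1})(X_{j,t-1}-p_{j,t-1})$; that is precisely where the recursion must be re-invoked (or where the persistent $\bv{W}_t$ makes the independence available for free). Your final paragraph acknowledges the self-referential structure, so this is more a matter of precision than a fatal flaw, but it should be made explicit. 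Two smaller points: the additive $1/n$ in the bound does not come from ``intrinsic one-step Bernoulli variance'' but from the ``$+1$'' in the conditional Rosenthal inequality (Lemma~\ref{lem:PoisBinMoments}) after dividing by $n$; and the paper needs the survival/colonization split in constructing $\bv{W}_t$ precisely so that the disagreement increment $J_{i,t+1}-J_{i,t}$ is bounded by a sum of two conditionally independent indicators -- your fresh coupling sidesteps this, which is a genuine (if minor) simplification.
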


To show Theorem \ref{thm:LqrErrorBounds}, as in \citep{Barbour:2008aa,Barbour:2015aa}, our approach for comparing
$\bv{X}_t$ and 
$\bv{p}_t$ is through a coupling with an intermediate occupancy process 
approximation $\bv{W}_t$, whose nodes evolve independently and satisfy 
$\mathbb{E}W_{i,t} = p_{i,t}$ for every $i=1,\dots,n$ and $t \geq 0$.
By measuring the total variation between $\bv{X}_t$ and $\bv{W}_t$,
the method of bounded differences (see \ref{sec:MomentIneq})
allows
for the approximation of functionals of $\bv{X}_t$
by the deterministic process $\bv{p}_t$.
Observe 
that the decomposition~(\ref{eq:TransDecomp}) implies that, for each 
$t \geq 0$, 
\[
X_{i,t+1} = X_{i,t} \ind\{U_{i,t}\leq S_{i,t}(\bv{X}_t)\} + (1 - X_{i,t}) 
\ind\{U_{i,t} \leq C_{i,t}(\bv{X}_t)\},
\]
where $(U_{i,t})_{i=1}^n$ is a collection of independent uniformly 
distributed random variables on $[0,1]$.
We construct $\bv{W}_t$, on the same probability space, by setting
$\bv{W}_0 = \bv{X}_0$ and
\[
W_{i,t+1} = W_{i,t} \ind\{U_{i,t}\leq S_{i,t}(\bv{p}_t)\} + (1 - W_{i,t}) 
\ind\{U_{i,t} \leq C_{i,t}(\bv{p}_t)\}.
\]
For each $i=1,\dots,n$ and $t \geq 1$, let $J_{i,t} := \max_{1\leq s\leq
t} \ind\{X_{i,s} \neq W_{i,s}\}$ and $\bar{J}_t := n^{-1} \sum_{i=1}^n
J_{i,t}$. Using the independence of each $W_{i,t}$ we obtain our first
approximation result.

\begin{lemma}
\label{lem:ApproxLemma1}
For any $f = (f_1,\dots,f_m) \in \mathcal{C}^2(\mathbb{R}^n,\mathbb{R}^m)$, 
$q,r \geq 1$, and integer $t \geq 1$,
\[
\|f(\bv{X}_t)-f(\bv{p}_t)\|_{q,r} \leq n \|Df\|_1 \| \bar{J}_t \|_r 
+ \sqrt{\pi (q+r)}\|D f\|_{2,q} + \tfrac12 \|D^{(2)} f\|_{1,q}. 
\label{eq:FuncBound}
\]
\end{lemma}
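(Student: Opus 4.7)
The plan is to decompose
\[
f(\bv{X}_t) - f(\bv{p}_t) = \bigl[f(\bv{X}_t) - f(\bv{W}_t)\bigr] + \bigl[f(\bv{W}_t) - \mathbb{E} f(\bv{W}_t)\bigr] + \bigl[\mathbb{E} f(\bv{W}_t) - f(\bv{p}_t)\bigr]
\]
and invoke the triangle inequality for $\|\cdot\|_{q,r}$; the three summands on the right-hand side will produce, in order, the three terms in the stated bound. Each is controlled by a distinct mechanism: the coupling between $\bv{X}_t$ and the independent-node auxiliary process $\bv{W}_t$, a concentration inequality for functions of independent variables, and a Lindeberg-style swap for the residual deterministic error.

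For the coupling term I would use the discrete-Lipschitz estimate
\[
|f_k(\bv{X}_t) - f_k(\bv{W}_t)| \leq \sum_{i=1}^n \|\partial_i f_k\|_{\infty}\,\ind\{X_{i,t}\neq W_{i,t}\} \leq \sum_{i=1}^n \|\partial_i f_k\|_{\infty}\, J_{i,t},
\]
obtained by changing coordinates one at a time and exploiting $|X_{i,t}-W_{i,t}|=\ind\{X_{i,t}\neq W_{i,t}\}$. Taking the $\ell^q$ norm over $k$, applying Minkowski's inequality in the inner sum, and then using the monotonicity $\|\cdot\|_q\leq\|\cdot\|_1$ produces the pathwise bound $n\|Df\|_1 \bar J_t$, since each $J_{i,t}$ is $\{0,1\}$-valued so $\sum_i J_{i,t}=n\bar J_t$. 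Taking the $L^r$ norm delivers the first term.

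Independence of the $W_{i,t}$ makes $f_k(\bv{W}_t)$ a Doob martingale in $\mathcal{F}_i = \sigma(W_{1,t},\dots,W_{i,t})$ whose differences have modulus at most $\|\partial_i f_k\|_\infty$, so the bounded-differences moment inequality from \ref{sec:MomentIneq} yields the Gaussian-type bound $\|f_k(\bv{W}_t) - \mathbb{E} f_k(\bv{W}_t)\|_s \leq \sqrt{\pi s/2}\,\|Df_k\|_2$ for all $s\geq 1$. To combine these across $k$ under $\|\cdot\|_{q,r}$ I would split on whether $q\geq r$ or $q<r$ and interchange the $\ell^q$ and $L^r$ orderings via Minkowski's inequality (in $L^{r/q}$ when $r\geq q$, and by Jensen when $r<q$); in each case the result is $\leq C\sqrt{\max(q,r)}(\sum_k\|Df_k\|_2^q)^{1/q}$, which is dominated by $C\sqrt{q+r}\,\|Df\|_{2,q}$ through the paper's choice of the larger of the two orderings in $\|\cdot\|_{2,q}$ (cf.~the Minkowski-type remark following~(\ref{eq:MaxNormDef})). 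Tracking constants yields $\sqrt{\pi(q+r)}$.

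The bias term is deterministic; to control it I would perform Lindeberg replacement, swapping each $W_{i,t}$ for $p_{i,t}$ in turn and Taylor-expanding $f_k$ to second order in the $i$-th coordinate. Independence of the remaining $W_{j,t}$ together with $\mathbb{E} W_{i,t}=p_{i,t}$ eliminates the linear terms, leaving a quadratic remainder bounded by $\tfrac12 \|\partial_i^2 f_k\|_\infty$. Summing on $i$ and taking the $\ell^q$ norm over $k$, then applying the same Minkowski-type domination to pass to the paper's $\|\cdot\|_{1,q}$, gives $\tfrac12\|D^{(2)} f\|_{1,q}$. I expect the main technical obstacle to be coordinating the three arguments under the asymmetric $L^{q,r}$ norm --- particularly in the fluctuation step, where the interchange between expectation and the $\ell^q$ sum over $k$ must be executed so that the resulting matrix norm on $Df$ is exactly the one the paper singles out; everything else reduces to standard Taylor expansion and martingale moment bounds.
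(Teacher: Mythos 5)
Your decomposition into the coupling term $f(\bv{X}_t)-f(\bv{W}_t)$, the centred fluctuation $f(\bv{W}_t)-\mathbb{E}f(\bv{W}_t)$, and the bias $\mathbb{E}f(\bv{W}_t)-f(\bv{p}_t)$ is exactly the paper's, and your treatments of the first and third pieces (discrete Lipschitz estimate via $J_{j,t}$; Lindeberg replacement killing linear terms and leaving a second-derivative remainder) match the paper step for step. Where you genuinely diverge is the middle term. The paper avoids the $q$-versus-$r$ case split by defining the scalar auxiliary function $w_q(\bv{x})$ with $w_q(\bv{x})^q=\sum_k|f_k(\bv{x})-\mathbb{E}f_k(\bv{W}_t)|^q$, bounding $\mathbb{E}w_q(\bv{W}_t)\le\sqrt{\pi q/2}\,\|Df\|_{2,q}$ by Jensen plus the componentwise moment bound, and then applying the bounded-differences moment inequality \emph{to $w_q$ itself} using $|\Delta_j w_q|\le(\sum_k\|\partial_j f_k\|_\infty^q)^{1/q}$ (reverse triangle inequality in $\ell^q$), which yields $\|w_q(\bv{W}_t)\|_r\le\sqrt{\pi/2}(q^{1/2}+r^{1/2})\|Df\|_{2,q}$ and hence the stated $\sqrt{\pi(q+r)}$ after Cauchy's inequality. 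You instead apply the moment bound componentwise to each $\|f_k(\bv{W}_t)-\mathbb{E}f_k(\bv{W}_t)\|_s$ and interchange the $\ell^q$-sum over $k$ with the $L^r$-expectation via Minkowski (for $r\ge q$) or Jensen's concavity (for $r<q$), arriving at $\sqrt{\pi\max(q,r)/2}\,(\sum_k\|Df_k\|_2^q)^{1/q}$. Both routes are correct; yours even produces the slightly sharper constant $\sqrt{\pi\max(q,r)/2}\le\sqrt{\pi(q+r)}$, while the paper's $w_q$ device is a bit slicker in that it sidesteps the case split and uses a single application of the moment inequality rather than a family of them. In both cases the final domination by $\|Df\|_{2,q}$ uses the fact that the paper's $\|\cdot\|_{2,q}$ is defined as the larger of the two sum orderings, exactly as you note.
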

\begin{proof}
Define the function $w_q$ on $[0,1]^n$ by $w_q(\bv{x})^q = \sum_{i=1}^m |f_i(\bv{x})-\mathbb{E}f_i(\bv{W}_t)|^q$.
From the triangle inequality for the $L^{q,r}$ norm, $\|f(\bv{X}_t)-f(\bv{p}_t)\|_{q,r} \leq T_1+T_2+\|w_q(\bv{W}_t)\|_r$, where
$$
T_1 = \| f(\bv{X}_t) - f(\bv{W}_t) \|_{q,r} 
\qquad
\text{and}
\qquad
T_2 = \| \mathbb{E} f(\bv{W}_t) - f(\bv{p}_t) \|_q.
$$
$T_1$ is straightforward to bound using $J_{i,t}$, because
\[
\left(\sum_{i=1}^{m}
|f_i(\bv{X}_t) - f_i(\bv{W}_t)|^q\right)^{1/q} \leq {\sum_{i=1}^{m} \sum_{j=1}^{n}} 
\|\partial_j f_i\|_{\infty} J_{j,t}
\]
implies that $T_1 \leq n\|D f\|_1\|\bar{J}_t\|_r$. By the Lindeberg argument
(\ref{eq:Lindeberg}), $T_2 \leq \frac12 \|D^{(2)} f\|_{1,q}$. Using Jensen's inequality, 
\[
[\mathbb{E}w_q(\bv{W}_t)]^q \leq \sum_{i=1}^m \mathbb{E}|f_i(\bv{W}_t)-\mathbb{E}f_i(\bv{W}_t)|^q,
\]
so Theorem~\ref{thm:BernFuncMoments} implies $\mathbb{E}w_q(\bv{W}_t) \leq \sqrt{\frac{\pi q}{2}} \|Df\|_{2,q}$.
By the reverse triangle inequality, for each $j=1,\dots,n$,
\[
|\Delta_j w_q(\bv{x})| \leq \left(\sum_{i=1}^m |\Delta_j 
f_i(\bv{x})|^q\right)^{1/q}
\leq \left(\sum_{i=1}^m \|\partial_j f_i\|_{\infty}^q\right)^{1/q},
\]
and so another application of Theorem~\ref{thm:BernFuncMoments} gives $\|w_q(\bv{W}_t)\|_r \leq \sqrt{\frac\pi2}(q^{1/2} + r^{1/2})\|Df\|_{2,q}$. We conclude the proof with Cauchy's inequality: $q^{1/2} + r^{1/2} \leq  \sqrt{2(q+r)}$.
\end{proof}

The problem has now been reduced to obtaining appropriate bounds on the  moments of $\bar{J}_t$. By construction, for each $t=0,1,\dots$, 
\begin{align}
\label{eq:JRecurrence}
J_{i,t+1} \leq J_{i,t} &+ |\ind\{U_{i,t}\leq C_{i,t}(\bv{X}_t)\} - \ind\{U_{i,t}
\leq C_{i,t}(\bv{p}_t)\}|X_{i,t} \\
&+ |\ind\{U_{i,t}\leq S_{i,t}(\bv{X}_t)\} - \ind\{U_{i,t}\leq S_{i,t}
(\bv{p}_t)\}|(1-X_{i,t}), \nonumber
\end{align}
and so $J_{i,t+1}-J_{i,t}$ may be bounded above by the sum of two conditionally independent $\{0,1\}$-valued random variables. Thus, we have the following lemma which, together with Lemma \ref{lem:ApproxLemma1}, implies Theorem~\ref{thm:LqrErrorBounds}.
\begin{lemma}
\label{lem:ApproxLemma2} For any $q \geq 1$ and $t \geq 1$,
\begin{equation}
\|\bar{J}_t\|_q \leq 2q \sum_{s=0}^{t-1}(2n^{-1} + 3\beta_s \sqrt{\pi q} +
\gamma_s) e^{4q\alpha_{s,t}}.
\end{equation}
\end{lemma}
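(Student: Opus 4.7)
The plan is to derive a one-step recursive estimate of the form $\|\bar{J}_{s+1}\|_q \leq (1+\eta_s)\|\bar{J}_s\|_q + \epsilon_s$ and iterate via a discrete Gr\"onwall argument. Starting from (\ref{eq:JRecurrence}), write $J_{i,s+1}-J_{i,s} \leq Y_{i,s} := A_{i,s}X_{i,s} + B_{i,s}(1-X_{i,s}) \in \{0,1\}$, where $A_{i,s} = |\ind\{U_{i,s}\leq C_{i,s}(\bv{X}_s)\}-\ind\{U_{i,s}\leq C_{i,s}(\bv{p}_s)\}|$ and $B_{i,s}$ is the analogous discrepancy indicator for $S_{i,s}$. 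Because the $(U_{i,s})_i$ are independent, the $(Y_{i,s})_i$ are conditionally independent given $\mathcal{F}_s := \sigma(\bv{X}_s,\bv{W}_s)$, so I would decompose $\bar{Y}_s := n^{-1}\sum_i Y_{i,s}$ into its conditional mean plus a martingale-difference fluctuation and estimate each piece in $L^q$.

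For the conditional mean, integration in $U_{i,s}$ yields $\mathbb{E}[Y_{i,s}|\mathcal{F}_s] = X_{i,s}|C_{i,s}(\bv{X}_s)-C_{i,s}(\bv{p}_s)| + (1-X_{i,s})|S_{i,s}(\bv{X}_s)-S_{i,s}(\bv{p}_s)|$. Triangulating through $\bv{W}_s$ splits this into a Lipschitz piece bounded by $\sum_{j\neq i}\|\partial_j P_{i,s}\|_\infty J_{j,s}$ (using $\|\partial_j S_{i,s}\|_\infty,\|\partial_j C_{i,s}\|_\infty \leq \|\partial_j P_{i,s}\|_\infty$), which averages over $i$ to $\alpha_s\bar{J}_s$, and a residual $G_s := n^{-1}\sum_i[|C_{i,s}(\bv{W}_s)-C_{i,s}(\bv{p}_s)| + |S_{i,s}(\bv{W}_s)-S_{i,s}(\bv{p}_s)|]$ depending only on the independent Bernoullis $\bv{W}_s$. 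I would bound $\mathbb{E}G_s$ by combining the Lindeberg estimate (which yields the $\gamma_s$ contribution via $|\mathbb{E}C_{i,s}(\bv{W}_s)-C_{i,s}(\bv{p}_s)|\leq \tfrac12\sum_{j\neq i}\|\partial_j^2 P_{i,s}\|_\infty$) with the Efron--Stein bound $\var(C_{i,s}(\bv{W}_s))\leq \tfrac14\sum_{j\neq i}\|\partial_j P_{i,s}\|_\infty^2$, then applying $\mathbb{E}|Z|\leq\sqrt{\mathbb{E}Z^2}$ and Cauchy--Schwarz to obtain $\mathbb{E}G_s \leq \beta_s + \gamma_s$. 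Theorem~\ref{thm:BernFuncMoments} handles the centered part: the partial differences of $G_s$ in each coordinate of $\bv{W}_s$ are bounded by $2n^{-1}\sum_{i\neq j}\|\partial_j P_{i,s}\|_\infty$, with squared sum $\leq 4\beta_s^2$, giving $\|G_s - \mathbb{E}G_s\|_q \leq c\beta_s\sqrt{q}$.

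The martingale fluctuation $\bar{Y}_s-\mathbb{E}[\bar{Y}_s|\mathcal{F}_s]$ is a normalized sum of conditionally independent bounded variables; a conditional Bennett/Rosenthal $L^q$ moment inequality with the crude variance bound $\sum_i \var[Y_{i,s}|\mathcal{F}_s] \leq n$ gives $\|\bar{Y}_s - \mathbb{E}[\bar{Y}_s|\mathcal{F}_s]\|_q \leq c(\sqrt{q/n}+q/n)$, absorbable into the $q/n$ and $\beta_s\sqrt{q}$ terms. Assembling, $\|\bar{J}_{s+1}\|_q \leq \|\bar{J}_s\|_q + \|\bar{Y}_s\|_q \leq (1+\alpha_s)\|\bar{J}_s\|_q + c\,q\bigl(n^{-1}+\beta_s\sqrt{q}+\gamma_s\bigr)$, which iterates to the stated bound. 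The chief technical difficulty lies in pinning down the precise $q$-dependence of the exponential factor $e^{4q\alpha_{s,t}}$: obtaining this requires working with $\mathbb{E}\bar{J}_{s+1}^q$ rather than $\|\bar{J}_{s+1}\|_q$, expanding $(\bar{J}_s+\bar{Y}_s)^q \leq \bar{J}_s^q + q\bar{J}_{s+1}^{q-1}\bar{Y}_s$ and controlling the cross term via a H\"older/Bernstein bootstrap that effectively multiplies the per-step Lipschitz amplification $\alpha_s$ by $q$ before iteration.
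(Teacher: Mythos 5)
Your overall strategy — a one-step $L^q$ recursion $\|\bar J_{s+1}\|_q\le(1+\eta_s)\|\bar J_s\|_q+\epsilon_s$ closed by a discrete Gr\"onwall argument — is exactly the paper's, and your identification of $Y_{i,s}$ and the conditional-mean computation are correct. But there is a genuine gap in the fluctuation estimate. You decompose $\bar Y_s$ into its conditional mean and a martingale-difference part, and for the latter you invoke a conditional Rosenthal inequality with the ``crude variance bound'' $\sum_i\var[Y_{i,s}\mid\mathcal F_s]\le n$, yielding a per-step additive error of order $\sqrt{q/n}+q/n$. You claim the $\sqrt{q/n}$ term is ``absorbable'' into the $q/n$ and $\beta_s\sqrt q$ terms, but it is not: $\sqrt{q/n}\big/(q/n)=\sqrt{n/q}$ is unbounded, and $\beta_s$ can be arbitrarily small (indeed zero), so $\sqrt{q/n}$ cannot be hidden in $\beta_s\sqrt q$ either. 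Iterating your recursion therefore produces an extra $\mathcal O(n^{-1/2})$ contribution per step that does not appear in the stated bound, whose explicit $n$-scaling is $n^{-1}$. The fix is the one the paper packages in Lemma~\ref{lem:PoisBinMoments}: since $Y_{i,s}\in\{0,1\}$, you have the tight conditional variance bound $\sum_i\var[Y_{i,s}\mid\mathcal F_s]\le\sum_i\mathbb E[Y_{i,s}\mid\mathcal F_s]$, and feeding this into Rosenthal (with an AM--GM step to trade $\sqrt{q\,\|\sum\mathbb E[Y_i\mid\mathcal F]\|_q}$ for $q+\|\sum\mathbb E[Y_i\mid\mathcal F]\|_q$) yields $\|\sum_i Y_{i,s}\|_q\le 2q\bigl(1+\|\sum_i\mathbb E[Y_{i,s}\mid\mathcal F_s]\|_q\bigr)$, exactly as in the paper; the additive error then scales as $q/n$ and the stated inequality follows.

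Your final paragraph also misdiagnoses where the $e^{4q\alpha_{s,t}}$ comes from. You need not track the precise $q$-dependence of the exponent from below, because the exponent only needs to be an \emph{upper} bound: if your per-step multiplicative factor is $(1+c\,\alpha_s)$ for any absolute constant $c$, then iterating and using $1+x\le e^x$ already gives $e^{c\,\alpha_{s,t}}\le e^{4q\alpha_{s,t}}$. In the paper, the factor $4q$ enters simply because Lemma~\ref{lem:PoisBinMoments} multiplies the whole conditional-mean estimate (and hence the Lipschitz piece $\alpha_s\|\bar J_s\|_q$) by $2q$, with another factor $2$ coming from the two terms for $S_{i,t}$ and $C_{i,t}$. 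No expansion of $\mathbb E\bar J_{s+1}^q$ or H\"older/Bernstein bootstrap is required. Finally, a cosmetic point: the paper does not re-derive the bound on $G_s$ by hand via Lindeberg and Efron--Stein; it applies Lemma~\ref{lem:ApproxLemma1} directly to $S_t$ and $C_t$ (with parameters $q\to1$, $r\to q$) using $\|DS_t\|_1,\|DC_t\|_1\le\alpha_t$, $\|DS_t\|_{2,1},\|DC_t\|_{2,1}\le n\beta_t$, $\|D^{(2)}S_t\|_{1,1},\|D^{(2)}C_t\|_{1,1}\le n\gamma_t$, which is cleaner but morally the same as your computation.
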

\begin{proof}
Letting $S_t = (S_{i,t})_{i=1}^n$ and 
$C_t = (C_{i,t})_{i=1}^n$, (\ref{eq:JRecurrence}) together with Lemma 
\ref{lem:PoisBinMoments} implies that, for any integer $t \geq 0$,
\[
\|\bar{J}_{t+1}\|_q \leq \|\bar{J}_t\|_q + \frac{2q}{n}\left\lbrace
2 + \|S_t(\bv{X}_t)-S_t(\bv{p}_t)\|_{1,q} + \|C_t(\bv{X}_t)-C_t(\bv{p}_t)
\|_{1,q}\right\rbrace.
\]
Applying Lemma \ref{lem:ApproxLemma1}, 
\[
\|\bar{J}_{t+1}\|_q \leq (1+4q\alpha_t)\|\bar{J}_t\|_q + 2q(2n^{-1}+
2\beta_t \sqrt{2 \pi q} + \gamma_t),
\]
and, since $1+4q\alpha_t \leq e^{4q\alpha_t}$, the lemma 
follows.
\end{proof}

Theorem~\ref{thm:LqrErrorBounds} is sufficient for proving our main results.
However, it seems prudent to examine its asymptotics in the number of
nodes and, in particular, demonstrate the law of large numbers result seen in Corollary \ref{cor:LawLargeNums}. 
For any vector $\bv{x}$ of length $n$, we let $\bar{\bv{x}} = n^{-1} \bv{x}$, so
that $\langle \bar{\bv{x}}, h \rangle = n^{-1} \sum_{i=1}^n h_i x_i$ becomes
a weighted average of the components of $\bv{x}$, appropriately normalised to
remain bounded as $n \to \infty$ for bounded $h$. 
For fixed $h \in \mathbb{R}^n$, by directly applying Theorem~\ref{thm:LqrErrorBounds} to $\langle \bar{\bv{X}}_t, h\rangle$ and $\langle \bar{\bv{p}}_t, h \rangle$, it is found that the variation (or, indeed, any one of the higher-order moments) of the empirical
measure of the occupancy process decays with order $\bigO(\sum_{s<t} \psi_t \vee n^{-1/2})$.
Proceeding further in this direction, the proof of Theorem~\ref{thm:LqrErrorBounds} implies a general log-normal concentration inequality (Corollary~\ref{cor:Concentration}) on the maximal deviation between $\langle \bar{\bv{X}}_t, h \rangle$ and $\langle \bar{\bv{p}}_t, h \rangle$ over $h \in \mathcal{H} \subset \mathbb{R}^n$, extending the result of \citep{Barbour:2015aa}. Furthermore, Corollary~\ref{cor:LawLargeNums} is an immediate consequence of Corollary~\ref{cor:Concentration}.
\begin{corollary}
\label{cor:Concentration}
Let $\mathcal{H} \subset \mathbb{R}^n$  with $H = \sup_{h \in
\mathcal{H}} \|h\|_{\infty}$. Then, for each $t \geq 0$, denoting $\Psi_t = 12\sqrt{\pi} (n^{-1} + \max_{s \leq t} \psi_s)$, for any $x > 1$,
\begin{multline}
\label{eq:Concentration}
\mathbb{P}\left(\sup_{h\in\mathcal{H}}|\langle \bar{\bv{X}}_t-\bar{\bv{p}}_t, h \rangle|> H t \Psi_{t} x+\Rad(\mathcal{H})\right) \\
\leq e^{-\tfrac12 nt^2 x^2 \Psi_t^2} + \sum_{s=1}^t 
\exp\left[-\frac{4 \alpha_{0,s} (\log x)^2}{(1+4\alpha_{0,t})^2} + 4\alpha_{0,s}\right].
\end{multline}
\end{corollary}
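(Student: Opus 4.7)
The plan is to reuse the independent coupling $\bv{W}_t$ constructed in the proof of Theorem~\ref{thm:LqrErrorBounds} and split via the triangle inequality:
\[
\sup_{h\in\mathcal{H}}|\langle \bar{\bv{X}}_t - \bar{\bv{p}}_t, h\rangle|
\leq \sup_{h\in\mathcal{H}}|\langle \bar{\bv{X}}_t - \bar{\bv{W}}_t, h\rangle|
+ \sup_{h\in\mathcal{H}}|\langle \bar{\bv{W}}_t - \bar{\bv{p}}_t, h\rangle|.
\]
I would then handle each summand with a different concentration argument, splitting the deviation budget $Ht\Psi_t x$ between them (roughly half to each).

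For the first summand, the pointwise bound $|X_{i,t}-W_{i,t}|\leq J_{i,t}$ gives $\sup_{h\in\mathcal{H}}|\langle\bar{\bv{X}}_t-\bar{\bv{W}}_t,h\rangle|\leq H\bar{J}_t$, reducing the question to a tail bound on $\bar{J}_t$. Starting from the moment estimate
\[
\|\bar{J}_t\|_q \leq 2q\sum_{s=0}^{t-1}(2n^{-1}+3\beta_s\sqrt{\pi q}+\gamma_s)\,e^{4q\alpha_{s,t}}
\]
of Lemma~\ref{lem:ApproxLemma2}, Markov's inequality in the form $\mathbb{P}(\bar{J}_t>a)\leq(\|\bar{J}_t\|_q/a)^q$, combined with the near-optimal choice $q^\ast \approx (\log x)/(1+4\alpha_{0,t})$, converts the $q^{3/2}e^{4q\alpha_{0,s}}$ growth into precisely the Gaussian-in-$\log x$ exponent $-4\alpha_{0,s}(\log x)^2/(1+4\alpha_{0,t})^2$ that appears in the statement. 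The sum $\sum_{s=1}^{t}$ is produced by applying this argument termwise to the telescoped decomposition $\bar{J}_t=\sum_s(\bar{J}_s-\bar{J}_{s-1})$, with the recurrence from the proof of Lemma~\ref{lem:ApproxLemma2} controlling each increment.

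For the second summand, the key fact is that $(W_{i,t})_{i=1}^n$ are independent across $i$ with mean $p_{i,t}$: each $W_{i,t}$ is a deterministic function of the independent sequence $(U_{i,s})_{s=0}^{t-1}$ together with the fixed initial value $X_{i,0}$. Standard symmetrization then bounds $\mathbb{E}\sup_{h\in\mathcal{H}}|\langle \bar{\bv{W}}_t - \bar{\bv{p}}_t, h\rangle|$ by $\Rad(\mathcal{H})$ (up to the usual constant), and McDiarmid's bounded differences inequality applied with individual difference $H/n$ across $n$ independent coordinates yields the tail $\exp(-2z^2 n/H^2)$ around this mean. Choosing $z$ proportional to $Ht\Psi_t x$ reproduces the first (Gaussian) term $\exp(-\tfrac12 nt^2 x^2 \Psi_t^2)$ of the bound.

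The main obstacle is the precise derivation of the log-normal sum on the right-hand side of~(\ref{eq:Concentration}). This requires a careful, simultaneous choice of the Markov moment index $q$ and of the threshold allocation across the $t$ telescoped increments of $\bar{J}_t$, so that the polynomial $q^{3/2}$ factor and the exponential $e^{4q\alpha_{s,t}}$ factors balance to produce exactly the stated exponent, including the additive $4\alpha_{0,s}$ overhead. Everything else is essentially a bookkeeping combination of the coupling, symmetrization, and bounded-differences arguments.
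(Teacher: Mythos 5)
Your overall skeleton matches the paper's: split via the intermediate independent process $\bv{W}_t$, control $\sup_h|\langle\bar{\bv{W}}_t-\bar{\bv{p}}_t,h\rangle|$ by symmetrization plus a one-sided McDiarmid bound (this part is essentially identical to what the paper does, and your choice $z=\tfrac12 Ht\Psi_t x$ is exactly right), and control $J_t(\mathcal{H})=\sup_h|\langle\bar{\bv{X}}_t-\bar{\bv{W}}_t,h\rangle|\leq H\bar J_t$ by a moment/Chernoff argument built on Lemma~\ref{lem:ApproxLemma2}. But the mechanism you propose for producing the sum $\sum_{s=1}^{t}$ in the final tail bound is not the one the paper uses, and I do not think it goes through as you describe.

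You propose telescoping $\bar J_t=\sum_{s=1}^{t}(\bar J_s-\bar J_{s-1})$, allocating the threshold across increments, and controlling each increment by ``the recurrence from the proof of Lemma~\ref{lem:ApproxLemma2}.'' That recurrence gives
\[
\|\bar J_{s+1}\|_q\leq(1+4q\alpha_s)\|\bar J_s\|_q+2q(2n^{-1}+2\beta_s\sqrt{2\pi q}+\gamma_s),
\]
which bounds each increment only in terms of the \emph{cumulative} quantity $\|\bar J_s\|_q$; the increments do not decouple into a clean termwise estimate, so a union bound over increments does not immediately give each summand the exponent $-4\alpha_{0,s}(\log x)^2/(1+4\alpha_{0,t})^2$. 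The paper instead keeps $\bar J_t$ intact, raises the already-present sum over $s$ in the statement of Lemma~\ref{lem:ApproxLemma2} to the $q$-th power, and distributes that power back into the sum by a power-mean (Jensen-type) inequality, absorbing the resulting factor of $t$ into $L_s$ via the $\log(Ht)$ term. It is this conversion, not a telescoping of $\bar J_t$, that produces the $\sum_{s=1}^{t}$ structure of the final log-normal tail; it then applies Chernoff once to $\log J_t(\mathcal{H})$ with the single tilt $q=\log x\,(1+4\alpha_{0,t})^{-1}$. Finally, note that this choice of $q$ is only valid (i.e., $q\geq 1$) when $\log x\geq 1+4\alpha_{0,t}$; the paper handles $1<x<e^{1+4\alpha_{0,t}}$ separately by simply adding $4\alpha_{0,s}$ to each exponent so that the bound becomes trivial, and this is precisely where the $+4\alpha_{0,s}$ term in~(\ref{eq:Concentration}) comes from. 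Your proposal acknowledges this additive overhead but folds it into ``bookkeeping,'' when in fact it marks a genuine case split that you need to make explicit. I would recommend abandoning the increment telescoping and instead following the power-mean route above, which is what the paper actually does.
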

\begin{proof}
We first show 
\begin{equation}
\label{eq:RademacherBound}
\mathbb{E} \sup_{h\in\mathcal{H}} |\langle \bar{\bv{W}}_t- \bar{\bv{p}}_t, h \rangle|
\leq \Rad(\mathcal{H}).
\end{equation}
Since $\mathbb{E}\langle \bar{\bv{W}}_t, h \rangle = \langle \bar{\bv{p}}_t, h \rangle$, proceeding via the symmetrisation
method
\citep[Lemma 26.2]{Shalev:2014aa},
\[
\mathbb{E}\sup_{h\in\mathcal{H}}|\langle \bar{\bv{W}}_t - \bar{\bv{p}}_t, h \rangle| \leq
\mathbb{E}\sup_{h\in\mathcal{H}}\frac1n \sum_{i=1}^n h_i (W_{i,t}-\tilde{W}_{i,t}),
\]
where $\tilde{W}_{i,t}$ is an independent copy of $W_{i,t}$ for each
$i=1,\dots,n$. Now $ W_{i,t} - \tilde{W}_{i,t} \eqdist \sigma_{i} Z_{i} $ where $ \sigma_{i} $ are independent
Rademacher random variables and $ Z_{i} $ are $\{0,1\}$-valued random variables with $ \mathbb{P} \left( Z_{i} = 1\right) = 2 p_{i,t}(1-p_{i,t}) $, independent of $ \sigma_{i} $. So 
\[
\mathbb{E} \sup_{h\in\mathcal{H}} |\langle \bar{\bv{W}}_t-\bar{\bv{p}}_t, h \rangle|   \leq \mathbb{E} \sup_{h\in\mathcal{H}} \frac1n \sum_{i=1}^{n} h_{i} \sigma_{i} Z_{i} 
\]
The relation (\ref{eq:RademacherBound}) follows upon conditioning on each $Z_i$; indeed, for
any binary vector $\bv{z}=(z_1,\dots,z_n) \in \{0,1\}^n$,
\[
\mathbb{E}\sup_{h\in\mathcal{H}}\frac1n\sum_{i=1}^n h_i\sigma_i z_i
= \mathbb{E}\sup_{h\in\mathcal{H}}\mathbb{E}\left[\cond{\frac1n
\sum_{i=1}^n h_i \sigma_i}\sigma_i:\,z_i=1\right] \leq \Rad(\mathcal{H}).
\]
As a consequence of the one-sided McDiarmid inequality (\ref{eq:McDiarmid}),
\begin{multline}
\label{eq:McDiarmidAppl}
\mathbb{P}\left(\sup_{h\in \mathcal{H}} |\langle \bar{\bv{W}}_t - \bar{\bv{p}}_t, h \rangle| >
\tfrac12 Ht\Psi_t x + \Rad(\mathcal{H})\right) \\
\leq \exp\left(-\tfrac12 nt^2 x^2 \Psi_t^2
\right).
\end{multline}
Now let $J_t(\mathcal{H}) = \sup_{h \in \mathcal{H}}|\langle \bar{\bv{X}}_t - \bar{\bv{W}}_t, h \rangle|$, so that $\mathbb{E}J_t(\mathcal{H})^q \leq
H^q \|\bar{J}_t\|_q^q$.
Lemma \ref{lem:ApproxLemma2} provides a partial estimate for the moment-generating function of $J_t(\mathcal{H})$:
for any $q \geq 1$, denoting $L_s = \log(4n^{-1}+6\beta_s\sqrt{\pi} + 2\gamma_s) + \log(H t)$ for each
$s \leq t$,
\[
\mathbb{E}e^{q \log J_t(\mathcal{H})} \leq \sum_{s=0}^{t-1} \exp(4q^2 \alpha_{s,t} + q L_s + q \log q).
\]
Assuming that $\log x \geq 1 + 4 \alpha_{0,t}$, choose $q = \log x (1 +
4 \alpha_{0,t})^{-1}$, so, by the Chernoff approach,
\begin{multline}
\label{eq:ChernoffBound}
\mathbb{P}\left(\log J_t(\mathcal{H}) > \log x + \max_{s\leq t} L_s\right) \\
\leq \mathbb{E}\exp\left(q \log J_t(\mathcal{H}) - q \max_{s \leq t} L_s - q\log x\right)
\leq \sum_{s=1}^t \exp\left(-\frac{4\alpha_{0,s} (\log x)^2}{(1+4\alpha_{0,t})^2}\right).
\end{multline}
To extend to $0 \leq \log x < 1 + 4\alpha_{0,t}$, it suffices to add $4 \alpha_{0,s}$
into each exponent, whereupon inequality (\ref{eq:ChernoffBound}) becomes the
trivial bound. Together with (\ref{eq:McDiarmidAppl}), this implies 
Corollary~\ref{cor:Concentration}.
\end{proof}

We conclude this section with an error estimate for approximating
$f(\bv{X}_t)$ for arbitrary three-times differentiable functions $f$, by the
linear approximation (\ref{eq:LinApproxGen}).
Aside from acting as a fundamental component of the proof of 
Proposition~\ref{prop:OneStep} below, 
it extends Theorem~\ref{thm:WasserBound} to
provide error estimates for the normal approximation to non-linear functions
of $\bv{X}_t$. 
\begin{proposition}
\label{prop:LinEstimate}
There is a universal constant $C > 0$ such that,
for any $f \in \mathcal{C}^3([0,1]^n)$ and $t \geq 1$,
\begin{multline*}
\mathbb{E}| f(\bv{X}_t) - f(\bv{p}_t) - \langle\zeta_t, \sqrt{n}\nabla f(\bv{p}_t) 
\rangle
| \\
\leq C{\sqrt{1 + \log n}} \left[1 + \textstyle{\sum\limits_{s=0}^{t-1}} (n^{-1} + n
\psi_s^2) t e^{16 \alpha_{s,t}}\right]\\
\left[ n \max_{j,k=1,\dots,n}
\|\partial_j \partial_k f\|_{\infty} +\sqrt{n}\max_{j=1,\dots,n}\textstyle{
\sum\limits_{k=1}^n \|\partial_j \partial_k^2 f\|_{\infty}}
\right].
\end{multline*}
\end{proposition}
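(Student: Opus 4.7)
Let $g_f(\bv{x}) := f(\bv{x}) - f(\bv{p}_t) - \langle \bv{x}-\bv{p}_t,\nabla f(\bv{p}_t)\rangle$, so the quantity to be bounded equals $\mathbb{E}|g_f(\bv{X}_t)|$. The plan is to expand $g_f$ to second order about $\bv{p}_t$ with integral remainder,
\[
g_f(\bv{X}_t) = \tfrac{1}{2} Q(\bv{X}_t) + R_3(\bv{X}_t),\qquad Q(\bv{x}) := \langle \bv{x}-\bv{p}_t,\,D^2 f(\bv{p}_t)(\bv{x}-\bv{p}_t)\rangle,
\]
with $R_3$ involving third derivatives of $f$, and then to control each piece by coupling $\bv{X}_t$ with the independent auxiliary process $\bv{W}_t$ from \S\ref{sec:Coupling}.

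For the quadratic term, I would split $Q(\bv{X}_t) = Q(\bv{W}_t) + [Q(\bv{X}_t) - Q(\bv{W}_t)]$. Since the components of $\bv{W}_t$ are independent with $\mathbb{E}W_{i,t} = p_{i,t}$, direct moment computation gives $\mathbb{E}Q(\bv{W}_t) = \sum_i \partial_i^2 f(\bv{p}_t)\,p_{i,t}(1-p_{i,t})$, of size at most $n\max_{j,k}\|\partial_j\partial_k f\|_\infty$, while the fourth-order Bernoulli product moments reduce by pairing indices to a Frobenius-type quantity $\|D^2 f(\bv{p}_t)\|_F^2 \leq n^2 \max_{j,k}\|\partial_j\partial_k f\|_\infty^2$. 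Hence $\mathbb{E}|Q(\bv{W}_t)| = O(n\max_{j,k}\|\partial_j\partial_k f\|_\infty)$, which accounts for the ``$1$'' in the outer bracket of the proposition. The coupling residual expands as
\[
Q(\bv{X}_t) - Q(\bv{W}_t) = \sum_{j,k}\partial_j\partial_k f(\bv{p}_t)\bigl[(X_{j,t}-W_{j,t})(X_{k,t}-p_{k,t}) + (W_{j,t}-p_{j,t})(X_{k,t}-W_{k,t})\bigr],
\]
and I would bound it by H\"older's inequality, applying Lemma~\ref{lem:ApproxLemma2} at $q=4$ to yield the $e^{16\alpha_{s,t}}$ factor; the bilinear structure, combined with a second-moment estimate on the coupling mismatches, supplies the target $n\psi_s^2$ scaling (rather than the naive $\psi_s$) because each coupling factor enters twice.

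The remainder $R_3(\bv{X}_t) = \int_0^1 \tfrac{(1-s)^2}{2}\sum_{j,k,l}\partial_j\partial_k\partial_l f(\bv{p}_t+s(\bv{X}_t-\bv{p}_t))\prod_{\iota\in\{j,k,l\}}(X_{\iota,t}-p_{\iota,t})\,\mathd s$ is treated by a parallel coupling: for $R_3(\bv{W}_t)$, independence eliminates contributions from cubic index triples without an index coincidence, so the non-vanishing geometry retains the diagonal pairings $k=l$ and produces precisely the $\sum_{j,k}\partial_j\partial_k^2 f(\bv{p}_t)$-type sums; a one-dimensional concentration (Khintchine/Bernstein) argument then delivers the characteristic $\sqrt{n}$ prefactor together with the $\max_j\sum_k\|\partial_j\partial_k^2 f\|_\infty$ geometry. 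The $R_3(\bv{X}_t) - R_3(\bv{W}_t)$ coupling error feeds into the same $\sum_s (n^{-1}+n\psi_s^2)te^{16\alpha_{s,t}}$ bracket. The $\sqrt{1+\log n}$ factor arises when controlling the supremum over $s \in [0,1]$ of the random Hessian (respectively third-derivative) evaluations inside the Taylor integral: passing from an $L^q$ moment estimate to an $L^1$ bound via Markov, with the optimal choice $q \sim \log n$ effecting a union bound across $n$ coordinates, produces the logarithm.

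The principal technical obstacle is the refinement from $\max_j\sum_k\|\partial_j\partial_k f\|_\infty$, which a direct application of Theorem~\ref{thm:LqrErrorBounds} to $g_f$ would yield, down to $\max_{j,k}\|\partial_j\partial_k f\|_\infty$: a gain by a factor of $n$ available only through the Frobenius-norm cancellation in $Q(\bv{W}_t)$ rather than pointwise sup-norm estimates. Similarly, extracting $n\psi_s^2$ instead of $n\psi_s$ in the coupling error requires tracking pairs of simultaneous mismatches through the bilinear form via Cauchy--Schwarz and the second-moment bounds on $\bar{J}_t$ furnished by Lemma~\ref{lem:ApproxLemma2}.
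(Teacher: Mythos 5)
Your strategy of Taylor-expanding $f$ directly about $\bv{p}_t$ to second order with a cubic remainder, and then coupling each Taylor term separately, differs structurally from the paper's argument and, as you have sketched it, leaves a genuine gap. The paper instead Taylor-expands $f(\bv{X}_t)$ about $\bv{W}_t$ (not $\bv{p}_t$), so that the second-order remainder is controlled by $\tfrac12\max_{j,k}\|\partial_j\partial_k f\|_\infty\,(\sum_j|X_{j,t}-W_{j,t}|)^2 \leq \tfrac12 n^2\max\|\partial_j\partial_k f\|_\infty\,\bar J_t^2$; it is \emph{this} expansion that makes the coupling error genuinely quadratic in the mismatch and hence yields $\|\bar J_t\|_2^2 \lesssim t\sum_s(n^{-1}+\psi_s)^2 e^{16\alpha_{s,t}}$, which is where the $n\psi_s^2$ and $e^{16\alpha_{s,t}}$ scalings really come from. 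Your coupling residual
\[
Q(\bv{X}_t)-Q(\bv{W}_t)=\sum_{j,k}\partial_j\partial_k f(\bv p_t)\bigl[(X_{j,t}-W_{j,t})(X_{k,t}-p_{k,t})+(W_{j,t}-p_{j,t})(X_{k,t}-W_{k,t})\bigr]
\]
is \emph{not} bilinear in the coupling mismatch: each summand contains exactly one factor of the form $X-W$, the other factor being a bounded deviation from $\bv p_t$. The claim that ``each coupling factor enters twice'' therefore does not hold, and a naive H\"older bound gives $\bigO(\psi_s)$, not $\bigO(\psi_s^2)$. To rescue this you would have to decompose further into a genuinely bilinear piece $\sum_{j,k}\partial_j\partial_k f(\bv p_t)(X_{j,t}-W_{j,t})(X_{k,t}-W_{k,t})$ plus cross pieces $\sum_j(X_{j,t}-W_{j,t})\sum_k\partial_j\partial_k f(\bv p_t)(W_{k,t}-p_{k,t})$, and then control the cross pieces via the Orlicz maximal inequality~(\ref{eq:MaxOrlicz}) applied to the inner sums --- which is essentially the device the paper invokes, but you have not identified this step.

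The treatment of the cubic remainder $R_3$ is also problematic. You write it with third derivatives evaluated at the random intermediate point $\bv p_t+s(\bv X_t-\bv p_t)$, so these evaluations are themselves random and correlated with the increments $(X_{\iota,t}-p_{\iota,t})$; the claim that ``independence eliminates contributions from cubic index triples without an index coincidence'' would be valid for fixed third derivatives (e.g.\ a polynomial $f$), but not here. The paper sidesteps this by never expanding $f(\bv X_t)$ beyond second order: the third-order structure enters only through the comparison of $f(\bv W_t)$ to $f(\bv p_t)$ in Lemma~\ref{lem:LinMOBD}, which controls the variance of $f(\bv W_t)-\langle\nabla f(\bv p_t),\bv W_t-\bv p_t\rangle$ via Efron--Stein and the Lindeberg telescoping trick, producing the $\max_j\sum_k\|\partial_j\partial_k^2 f\|_\infty$ geometry cleanly. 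Finally, you attribute the $\sqrt{1+\log n}$ to a supremum over $s\in[0,1]$ in the Taylor integral; in the paper's argument it comes from~(\ref{eq:MaxOrlicz}), a maximum over the $n$ coordinates $j$ of the sub-Gaussian differences $\partial_j f(\bv W_t)-\partial_j f(\bv p_t)$, which is a different mechanism. Your closing observation about the $n$-fold gain over a direct application of Theorem~\ref{thm:LqrErrorBounds} is correct and a good diagnostic, but the mechanism you propose does not yet deliver it.
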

\begin{proof}
Relying on the independent node approximation once again, the proof follows
by comparing $\bv{X}_t$ to $\bv{W}_t$, whence $\bv{W}_t$ may be
compared to $\bv{p}_t$ using Lemma \ref{lem:LinMOBD}. Firstly,
\[
\mathbb{E}\left|f(\bv X_{t})-f(\bv W_{t})-\textstyle{\sum\limits_{j=1}^{n}\partial_{j}f(\bv W_{t})(X_{j,t}-W_{j,t})}\right| \leq n^2 \max_{1\leq j,k \leq n} \|\partial_j \partial_k f\|_{\infty} \|\bar{J}_t\|_2^2,
\]
which may be controlled by Lemma \ref{lem:ApproxLemma2}. Focusing on the
remaining cross-term, by two applications of H\"{o}lder's inequality,
\begin{multline*}
\mathbb{E}\left|\textstyle{\sum\limits_{j=1}^{n}}[\partial_{j}f(\bv W_{t})-\partial_{j}f(\bv p_{t})](X_{j,t}-W_{j,t})\right| \\
\leq\left(\mathbb{E}\max_{j=1,\dots,n}[\partial_{j}f(\bv W_{t})-\partial_{j}f(\bv p_{t})]^{2}\right)^{1/2}\cdot n\|\bar{J}_{t}\| _{2},
\end{multline*}
and consequently, due to (\ref{eq:MaxOrlicz}), it suffices to estimate
$\|\partial_j f(\bv{W}_t)-\partial_j f(\bv{p}_t)\|_{\Psi}$. But, by
Theorem~\ref{thm:BernFuncMoments} and~(\ref{eq:Lindeberg}),
\[
\left\lVert \partial_{j}f\left(\bv W_{t}\right)-\partial_{j}f\left(\bv p_{t}\right)\right\rVert _{\Psi}
\leq C\sqrt{n}\max_{k=1,\dots,n}\left\lVert \partial_j \partial_kf\right\rVert _{\infty}+\frac{1}{2}\sum_{k=1}^{n}\left\lVert \partial_{j}\partial_{k}^{2}f\right\rVert _{\infty},
\]
and the result follows.
\end{proof}

\section{\label{sec:Stein}Stein's method for occupancy processes}

We now turn to the problem of normal approximation, and proving 
Theorem~\ref{thm:WasserBound}. Clearly, a direct application of the independent
node coupling $\bv{W}_t$ will not suffice. However, the conditional
independence property of occupancy processes immediately implies a
conditional central limit result: conditioning on $\bv{X}_t$,
$\langle\zeta_{t+1},h\rangle$ converges to a normal random variable for any $h\in
\mathbb{R}^n$. An estimate of the convergence rate is given by the
classical Berry-Esseen bound, for which some of the simplest proofs make
use of Stein's method. To obtain the required
\emph{unconditional} estimate is more challenging, but fundamentally
relies on this property.

The idea behind Stein's method is to estimate the difference
between the expectations $\mathbb{E}g(X)$ and $\mathbb{E}g(Z)$ through a characterising operator (often called the 
\emph{Stein operator}) $\mathcal{A}$ which has the following property: if a random variable $X$ satisfies $\mathbb{E}\mathcal{A}f(X) = 0$ for all $f$ in an appropriate class of functions, then $X \eqdist Z$. In the case of normal approximation, where 
$Z \sim \mathcal{N}(\mu,\sigma^2)$, the operator
\[
\mathcal{A}f(x) = \sigma^2 f'(x) - (x - \mu) f(x)
\]
suffices. Stein recognized that if, for some chosen function $g$, $f_g$ solves the \emph{Stein equation}
\[
\mathcal{A}f_g(x) = g(x) - \mathbb{E}g(Z),
\]
then, provided $X$ is similar in law to $Z$, $\mathcal{A}f_g(X)$ should also have small expectation. Stein's method is often successful because bounding $\mathbb{E}\mathcal{A}f(X)$ is an appreciably simpler task to perform in general. In particular, the method is known to be remarkably flexible for handling sums of random variables  with complex dependencies, and is well-suited for our purposes. See \citep{Chen:2010aa} for
a comprehensive exposition of normal approximation techniques involving Stein's method. The application of Stein's method to the one-point distributions of
a discrete-time process was considered by Goldstein
\citep{Goldstein:2004aa} to develop normal approximations for
\emph{hierarchical structures}. While the contraction principle used in
his analysis does not apply here, the one-step linearisation approach in
our analysis is of a similar flavour. 
Indeed, we may divine the relationship
between $\zeta_t$ and $\xi_t$ by way of a series of one-step approximations.
For any time $t\geq 1$, consider the $s$-step normal approximation
$\zeta_t^{(s)}$ defined for $h \in \mathbb{R}^n$ by
\[
\langle \zeta_t^{(s)}, h \rangle =\langle \zeta_{t-s}, D_{t-s,t} h \rangle  +
\sum_{r=t-s+1}^t \sigma_r[D_{r,t} h] \cdot z_r,
\]
where each $z_r$ is an independent standard normal random variable. We
denote the special case $s = 1$ by $\tilde{\zeta}_t$ and remark that
$\zeta_t^{(t)}$ and $\xi_t$ are equal in distribution. By working with
the $L^q$ metric between distributions of random variables, we have at
our disposal the following contraction identity under translations by
independent random variables. For any random variables $X, Y,$ and $Z$,
such that $Z$ is independent of $X$ and $Y$,
\begin{equation}
\label{eq:Contraction}
\|\law(X+Z)-\law(Y+Z)\|_q \leq \|\law(X)-\law(Y)\|_q,
\end{equation}
which is just a restatement of Young's inequality for convolutions.
Together with the triangle inequality,
\begin{align*}
\|\law \langle \zeta_t, h \rangle-\law \langle \xi_t, h \rangle\|_q
&\leq \sum_{s=0}^{t-1}
\|\law \langle \zeta_t^{(s)}, h \rangle  - \law \langle \zeta_t^{(s+1)}, h\rangle\|_q,\\
&\leq \sum_{s=1}^t
\|\law \langle \zeta_s, D_{s,t}h\rangle-\law \langle \tilde{\zeta}_s, D_{s,t}h\rangle\|_q.
\end{align*}
Indeed, for any time $t$,
\[
\|D_t\|_{\infty} = \max_{1\leq j\leq n} \sum_{i=1}^n 
\|\partial_j P_{i,t}\|_{\infty} \leq 1 + \alpha_t,
\]
implying that $\|D_{s+1,t} h\|_{\infty} \leq \|h\|_{\infty}
\exp(\alpha_{s,t})$, and hence Theorem~\ref{thm:WasserBound} follows
from Proposition~\ref{prop:OneStep}. Here and throughout, $C$ will
denote a universal constant, but it will not necessarily be the same on
each appearance. Furthermore, it will be assumed implicitly throughout
that $\sigma_t[h] > 0$ for each $t \geq 1$.
\begin{proposition}
\label{prop:OneStep}
For any integer $t \geq 0$,
\begin{align*}
\|\law \langle \zeta_{t+1}, h \rangle -\law \langle \tilde{\zeta}_{t+1}, h \rangle \|_q
\leq 
C\sqrt{\frac{1+\log n}{n}} \cdot \frac{\|h\|_{\infty}^{4-1/q} \,\kappa_t }{\sigma_{t+1}^{4-2/q}[h]}.
\end{align*}
\end{proposition}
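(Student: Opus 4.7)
The plan is to apply Stein's method conditionally on $\bv X_t$, leveraging the product-Bernoulli structure of the one-step transition. Using the duality formula for the $L^q$ norm with $q^{-1}+r^{-1}=1$, it suffices to bound $|\mathbb{E} g(Y) - \mathbb{E} g(\tilde Y)|$ uniformly over test functions $g$ with $\|g'\|_r \leq 1$, where $Y := \langle \zeta_{t+1}, h \rangle$ and $\tilde Y := \langle \tilde \zeta_{t+1}, h \rangle$. Conditional on $\bv X_t$, the target $\tilde Y$ is exactly $\mathcal{N}(M(\bv X_t), \sigma_{t+1}^2[h])$ with $M(\bv X_t) := \langle \zeta_t, D_t h \rangle$, so for each realisation of $\bv X_t$ let $f = f_g$ solve the corresponding Stein equation
\[
\sigma_{t+1}^2[h]\, f'(y) - (y - M(\bv X_t))\, f(y) = g(y) - \mathbb{E}\bigl[g(\mathcal{N}(M(\bv X_t), \sigma_{t+1}^2[h]))\bigr].
\]
Classical Stein solution bounds then yield $\|f\|_\infty$, $\|f'\|_\infty$ and $\|f''\|_\infty$ as deterministic multiples of appropriate inverse powers of $\sigma_{t+1}[h]$ and of $\|h\|_\infty^{1-1/q}$.

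\textbf{Main calculation.} Taking total expectation of the Stein operator applied to $Y$ gives
\[
\mathbb{E}[g(Y)] - \mathbb{E}[g(\tilde Y)] = \mathbb{E}\bigl[\sigma_{t+1}^2[h]\, f'(Y) - (Y - M(\bv X_t))\, f(Y)\bigr].
\]
I would decompose $Y - M = (Y - \mu(\bv X_t)) + (\mu(\bv X_t) - M(\bv X_t))$, where $\mu(\bv X_t) := \mathbb{E}[Y \mid \bv X_t] = n^{-1/2}\sum_j h_j(P_{j,t}(\bv X_t) - p_{j,t+1})$. Conditional on $\bv X_t$, $Y - \mu$ is a sum of independent centered summands bounded by $n^{-1/2}\|h\|_\infty$, so the classical Stein leave-one-out trick (replacing $f(Y)$ by $f(Y^{(i)})$, where $Y^{(i)}$ removes the $i$-th contribution, and Taylor-expanding) yields
\[
\mathbb{E}\bigl[(Y - \mu(\bv X_t))\, f(Y) \,\big|\, \bv X_t\bigr] = v(\bv X_t)\, \mathbb{E}[f'(Y) \mid \bv X_t] + R_1,
\]
where $v(\bv X_t) := n^{-1}\sum_j h_j^2 P_{j,t}(\bv X_t)(1 - P_{j,t}(\bv X_t))$ is the conditional variance and $R_1$ is a Berry-Esseen remainder of order $n^{-1/2}\|h\|_\infty^3 \|f''\|_\infty$.

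\textbf{Controlling the discrepancies.} After assembly, the error splits into three contributions:
\[
\mathbb{E}[(\sigma_{t+1}^2[h] - v(\bv X_t))\, f'(Y)], \qquad \mathbb{E}[(\mu(\bv X_t) - M(\bv X_t))\, f(Y)], \qquad \mathbb{E}R_1.
\]
For the variance discrepancy, I would Taylor-expand $p \mapsto p(1-p)$ about $p_{j,t+1}$ and apply Theorem~\ref{thm:LqrErrorBounds} to $\bv x \mapsto \sum_j h_j^2 P_{j,t}(\bv x)$, bounding $\mathbb{E}|\sigma_{t+1}^2[h] - v(\bv X_t)|$ in terms of $\|h\|_\infty^2 (1+\alpha_t)$ and the iterated moment bound of Lemma~\ref{lem:ApproxLemma2}. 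For the mean discrepancy, I would invoke Proposition~\ref{prop:LinEstimate} with $f_0(\bv x) = \sum_j h_j P_{j,t}(\bv x)$: since $\max_{j,k}\|\partial_j \partial_k f_0\|_\infty \leq \|h\|_\infty \Gamma_t$ and $\max_j \sum_k \|\partial_j \partial_k^2 f_0\|_\infty \leq \|h\|_\infty \delta_t$, Proposition~\ref{prop:LinEstimate} yields
\[
\sqrt{n}\,\mathbb{E}|\mu(\bv X_t) - M(\bv X_t)| \leq C\sqrt{1+\log n}\,\|h\|_\infty (n\Gamma_t + \sqrt n\,\delta_t)\Bigl[1 + \sum_{s=0}^{t-1} (n^{-1} + n\psi_s^2)\, t\, e^{16\alpha_{s,t}}\Bigr],
\]
which supplies both the logarithmic factor and the iterative structure in $\kappa_t$.

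\textbf{Assembly and main obstacle.} Combining the three error terms, each multiplied by the appropriate Stein bound on $\|f\|_\infty$, $\|f'\|_\infty$ or $\|f''\|_\infty$, yields the claim once one checks that the combined prefactor simplifies to $\|h\|_\infty^{4-1/q}\sigma_{t+1}^{-(4-2/q)}[h]$. The main obstacle is bookkeeping: tracking the correct scaling of the Stein constants as a joint function of $\sigma_{t+1}[h]$, $\|h\|_\infty$ and $q$ (so that the Wasserstein and Kolmogorov endpoints simultaneously match), and verifying that the structural pieces $(1 + \alpha_t + n\Gamma_t + n^{1/2}\delta_t)$ produced by the mean and variance Taylor expansions combine with the iterated $[1 + \psi_s\sqrt n(1 + \psi_s\sqrt n)]\,t\,e^{16\alpha_{s,t}}$ weight coming from Lemma~\ref{lem:ApproxLemma2} and Proposition~\ref{prop:LinEstimate} to reproduce $\kappa_t$ exactly as defined in Theorem~\ref{thm:WasserBound}.
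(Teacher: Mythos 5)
Your overall strategy---conditional Stein's method given $\bv{X}_t$, with the mean discrepancy $\mathbb{E}|\mu(\bv{X}_t)-M(\bv{X}_t)|$ controlled by Proposition~\ref{prop:LinEstimate} and the variance discrepancy $\mathbb{E}|\sigma_{t+1}^2[h]-\var_t\langle\zeta_{t+1},h\rangle|$ controlled by Theorem~\ref{thm:LqrErrorBounds}---is exactly the paper's, and both of those reductions are correctly identified (they correspond to equations~(\ref{eq:MeanApprox}) and~(\ref{eq:SigmaApprox})). The gap is in the claim that a single Taylor-expansion argument with a Berry--Esseen remainder $R_1$ of order $n^{-1/2}\|h\|_\infty^3\|f''\|_\infty$, ``uniformly over test functions $g$ with $\|g'\|_r\le 1$,'' closes the proof for all $q$. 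That step does not go through at the endpoint $q=\infty$: the Stein solution $f_z$ for indicator test functions is only $\mathcal{C}^1$ (as stated in Lemma~\ref{lem:Stein}), so $\|f_z''\|_\infty$ is unavailable and the second-order Taylor expansion you invoke cannot be applied. Your own ``main obstacle'' paragraph flags the bookkeeping as the difficulty, but the real obstacle is structural: the Kolmogorov case needs a genuinely different argument.

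The paper deals with this by proving the Wasserstein ($q=1$) and Kolmogorov ($q=\infty$) endpoints by separate arguments and interpolating for general $q$. For $q=1$, it does essentially what you propose, using the $\mathcal{C}^2$ Stein solution $f_g$ and the exact conditional identity~(\ref{eq:SteinMain}) (derived by conditioning on $X_{i,t+1}\in\{0,1\}$, rather than a leave-one-out Taylor expansion---the two are similar in spirit). For $q=\infty$, it cannot Taylor-expand; instead it rearranges the Stein equation for $f_z'$, substitutes into~(\ref{eq:SteinMain}) to produce the probability comparison~(\ref{eq:KolmoMain}), and then applies a smoothing device---shifting $z$ by $n^{-1/2}\|h\|_\infty$ and using the difference bound on $(x-\mu)f_z(x)$ from Lemma~\ref{lem:Stein}---to absorb the integral term. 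This produces the extra factors of $\|h\|_\infty/\sigma_{t+1}[h]$ that make the $q=\infty$ exponent $\|h\|_\infty^4/\sigma_{t+1}^4[h]$ rather than $\|h\|_\infty^3/\sigma_{t+1}^2[h]$. Without filling in this separate Kolmogorov argument (or supplying a correct uniform-in-$q$ replacement), your proposal is incomplete.
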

The strategy of proof is surprisingly
simple. First, using the conditional independence property, apply
Stein's method under the appropriate conditional probability space to
obtain an estimate for the one-step normal approximation. For this, let
$\mathbb{E}_t$, $\mathbb{P}_t$, and $\var_t$ denote expectation,
probability, and variance conditional on $\bv{X}_t$. Let $h \in
\mathbb{R}^n$ be arbitrary, and for any $f \in
\mathcal{C}^1(\mathbb{R})$, define $\mathcal{A}_t f$ and
$\tilde{\mathcal{A}}_t f$ as the Stein operators
\begin{align*}
\mathcal{A}_t f(x) &= \sigma_{t+1}^2[h]\cdot f'(x) -
\{x - \langle \zeta_t, D_t h \rangle \} f(x), \\
\tilde{\mathcal{A}}_t f(x) &= \sigma_{t+1}^2[h] \cdot f'(x) -
\{x - \mathbb{E}_t\langle \zeta_{t+1}, h \rangle \} f(x).
\end{align*}
We proceed by estimating $S_t f = \mathcal{A}_t f(\langle \zeta_{t+1}, h \rangle) $
through $\tilde{S}_t f = \tilde{\mathcal{A}}_t f(\langle \zeta_{t+1}, h \rangle) $. As is
customary with Stein's method, we first focus on the second term in
these expressions. Indeed,
\[
\mathbb{E}_{t}[\langle \zeta_{t+1}, h \rangle f(\langle \zeta_{t+1}, h \rangle) ]=
\frac{1}{\sqrt{n}}\sum_{i=1}^{n}h_{i}\{ \mathbb{E}_{t}[X_{i,t+1}f(\langle \zeta_{t+1}, h \rangle) ]
-p_{i,t+1}\mathbb{E}_{t}f(\langle \zeta_{t+1}, h \rangle) \},
\]
and hence, by conditioning on the event $X_{i,t+1} = 1$,
\[
\mathbb{E}_{t}[X_{i,t+1}f(\langle \zeta_{t+1}, h \rangle) ]=
\mathbb{E}_{t}f\left[\langle \zeta_{t+1}, h \rangle +\frac{h_{i}}{\sqrt{n}}(1-X_{i,t+1})\right]
P_{i,t} (\bv X_{t}).
\]
Conditioning further on $X_{i,t+1} = 0$ reveals that
\begin{multline}
\label{eq:SteinMain}
\mathbb{E}_t[\{\langle \zeta_{t+1}, h \rangle  -\mathbb{E}_t\langle \zeta_{t+1}, h \rangle  \}
f(\langle \zeta_{t+1}, h \rangle) ] \\= \frac1n \sum_{i=1}^n h_i^2
P_{i,t}(\bv{X}_t)[1-P_{i,t}(\bv{X}_t)] \mathbb{E}_t \int_0^1
f'\left[\langle \zeta_{t+1}, h \rangle  + \frac{h_i}{\sqrt{n}} (u - X_{i,t+1}) \right]
\mathd u.
\end{multline}
The method of bounding the integral term, and therefore $\mathbb{E}_t
\tilde{S}_t f$, varies depending on $q$. Denoting by $\phi$ and $\Phi$
the density and distribution functions, respectively, of a standard
normal random variable, we restate the fundamental lemma for Stein's
method for normal approximation, linking bounds on the Stein equations
with bounds for the Wasserstein and Kolmogorov metrics (the proof of
which may be found in \citep[Lemmas 2.3 \& 2.4]{Chen:2010aa}).
\begin{lemma}[\textsc{Stein's Lemma}]
\label{lem:Stein}
For any $\mu \in \mathbb{R}$, $\sigma > 0$, and
$g\in\mathcal{C}^{1}(\mathbb{R})$, defining $f_{g}$ as the function
satisfying
\begin{equation}
\label{eq:SteinEq}
\sigma^2 f_g'(x)-(x-\mu)f_g(x)=g(x)-\frac1{\sigma}\int_\mathbb{R}
g(x) \phi\left(\frac{x-\mu}{\sigma}\right) \mathrm{d}x,
\end{equation}
we have that $f_{g}\in\mathcal{C}^{2}(\mathbb{R})$ and satisfies
\[
\|f_g\|_{\infty}\leq 2 \|g'\|_{\infty},
\qquad \|f_g'\|_{\infty}\leq\frac{1}{\sigma}\|g'\|_{\infty},
\qquad \|f_g''\|_{\infty}\leq\frac{2}{\sigma^{2}}\|g'\|_{\infty}.
\]
Alternatively, for any $z\in\mathbb{R}$, defining $f_{z}$ as the
function satisfying
\[
\sigma^2 f_z'(x)-(x-\mu)f_{z}(x)=\ind_{x\leq z}-
\Phi\left(\frac{x-\mu}{\sigma}\right),
\]
we have that $f_z\in\mathcal{C}^{1}(\mathbb{R})$ satisfying
$\|f_z\|_{\infty}\leq\sigma^{-1}$, $\|f_z'\|_{\infty}\leq\sigma^{-1}$
and, for any $h \in \mathbb{R}$,
\[
|(x+h-\mu)f_z(x+h)-(x-\mu)f_{z}(x)|\leq\frac{1}{\sigma}\left(\frac{|x-\mu|}{\sigma}
+\frac{\sqrt{2\pi}}{4}\right)|h|.
\]
\end{lemma}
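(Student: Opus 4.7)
The plan is to standardize the problem and then work with an explicit formula for the Stein solutions. First, the affine substitution $y = (x-\mu)/\sigma$ reduces the Stein equation to $\tilde f'(y) - y \tilde f(y) = \tilde g(y) - \mathbb{E}\tilde g(Z)$ with $Z\sim\mathcal{N}(0,1)$ and $\tilde g(y) = g(\mu+\sigma y)$, so sup-norms of derivatives scale predictably in $\sigma$ and the stated constants follow from the standard-normal estimates after inverting the substitution. In the standard case, the integrating factor $e^{-x^2/2}$ yields the two equivalent representations
\[
f_g(x) = e^{x^2/2}\int_{-\infty}^x \bar g(y) e^{-y^2/2}\,\mathd y = -e^{x^2/2}\int_x^\infty \bar g(y) e^{-y^2/2}\,\mathd y,
\]
where $\bar g = g - \mathbb{E}g(Z)$; the two agree because $\int_\mathbb{R}\bar g\,\phi = 0$, and using whichever is more convenient on each half-line controls $f_g$ via the Mills-ratio inequalities $\sqrt{2\pi}\,(1-\Phi(x))\,e^{x^2/2}\le\min(\sqrt{\pi/2},\,1/x)$ for $x>0$ and its reflection.

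For the smooth bounds, I would substitute $\bar g(y) = \bar g(x) + \int_x^y g'(t)\,\mathd t$ into the representation and swap the order of integration via Fubini; the $\bar g(x)$ term is controllable thanks to $\int_\mathbb{R}\bar g\,\phi = 0$. This produces $f_g(x) = \int g'(t)\,K(x,t)\,\mathd t$ for an explicit kernel $K$ whose $L^1$-norm in $t$ is bounded by $2$ uniformly in $x$; the factor $2$ is sharp and emerges from balancing the Mills-ratio estimates on the two sides of $x$, yielding $\|f_g\|_\infty \le 2\|g'\|_\infty$. The bounds on $f_g'$ and $f_g''$ then come from the Stein equation itself: $\sigma^2 f_g'(x) = (x-\mu)f_g(x) + \bar g(x)$, with the apparent blow-up from $(x-\mu)$ cancelled by the Mills-ratio decay of $f_g$ implicit in the explicit formula; differentiating once more gives $\sigma^2 f_g''(x) = f_g(x) + (x-\mu)f_g'(x) + g'(x)$, which combines the previous two estimates.

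For the indicator case $g = \ind_{(-\infty,z]}$, the same explicit integration yields the closed form, in standardized coordinates,
\[
f_z(x) = \sqrt{2\pi}\,e^{x^2/2}\,\Phi(x\wedge z)\,\bigl(1-\Phi(x\vee z)\bigr).
\]
The bounds $\|f_z\|_\infty \le \sigma^{-1}$ and $\|f_z'\|_\infty \le \sigma^{-1}$ reduce to the elementary monotonicity inequality $\sqrt{2\pi}\,\Phi(a)(1-\Phi(b))\,e^{b^2/2}\le 1$ for $a\le b$, together with the Stein equation rearranged as an expression for $f_z'$. The main obstacle is the increment bound on $F(x):=(x-\mu)f_z(x)$: the naive estimate via $\|F'\|_\infty$ is inflated by the jump of $f_z'$ at $z$. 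The rescue is that $F$ itself is continuous at $z$, since the two branches of the closed form for $f_z$ meet there, so the Lipschitz constant of $F$ is finite. Differentiating $F$ piecewise away from $z$, combining the Stein equation with the Mills-ratio bounds above, and carefully accounting for the $|x-\mu|$-dependence yields an estimate of the form $|F'(x)|\le\sigma^{-1}\bigl(|x-\mu|/\sigma + \sqrt{2\pi}/4\bigr)$ at every $x\neq z$; the claimed Lipschitz-type bound then follows from the mean value theorem on each side of $z$ together with the continuity of $F$. Careful bookkeeping of the $|x-\mu|$-dependent constants via sharp Mills-ratio asymptotics is the main technical hurdle.
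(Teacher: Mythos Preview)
The paper does not prove this lemma; it simply cites \cite[Lemmas 2.3 \& 2.4]{Chen:2010aa}. Your sketch is essentially the standard argument found there, and the smooth case ($f_g$) is fine as outlined.

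There is, however, a genuine gap in your treatment of the increment bound for $F(x)=(x-\mu)f_z(x)$. Your pointwise estimate $|F'(x)|\le\sigma^{-1}(|x-\mu|/\sigma+\sqrt{2\pi}/4)$ is correct, but the mean value theorem only yields
\[
|F(x+h)-F(x)|=|F'(\xi)|\,|h|\le\frac{1}{\sigma}\left(\frac{|\xi-\mu|}{\sigma}+\frac{\sqrt{2\pi}}{4}\right)|h|
\]
for some $\xi$ between $x$ and $x+h$, and $|\xi-\mu|$ can exceed $|x-\mu|$ by as much as $|h|$. This produces an unwanted extra term of order $|h|^2/\sigma^2$, so the inequality as stated does not follow from the mean value theorem.

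The fix is simpler than the route you propose: write
\[
(x+h-\mu)f_z(x+h)-(x-\mu)f_z(x)=(x-\mu)\bigl[f_z(x+h)-f_z(x)\bigr]+h\,f_z(x+h),
\]
and bound the two pieces separately using $\|f_z'\|_\infty\le\sigma^{-2}$ (note: the correct power is $\sigma^{-2}$, not the $\sigma^{-1}$ stated in the lemma, which appears to be a typo) and the sharper bound $\|f_z\|_\infty\le\sqrt{2\pi}/(4\sigma)$, both of which drop out of your closed form and Mills-ratio estimates. This gives exactly $\sigma^{-1}(|x-\mu|/\sigma+\sqrt{2\pi}/4)|h|$ with no spurious $|h|^2$ term.
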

The Wasserstein ($q = 1$) case is outlined in \S\ref{sec:Wasserstein},
while the more difficult Kolmogorov ($q = \infty$) case is treated in
\S\ref{sec:Kolmogorov}. The estimate for general $q$ follows from these
two cases by interpolation. Simultaneously, by utilising the independent
node coupling from \S\ref{sec:Coupling}, the remaining dependence on
$\bv{X}_t$ may be removed, and a bound on $\mathbb{E} S_t f$ obtained.
Analogously to \citep[Proposition 4.1]{Goldstein:2004aa}, this involves
two key estimates, one approximating $\mathbb{E}_t\langle \zeta_{t+1}, h \rangle $ by
$\langle \zeta_t, D_t h \rangle $, and another for $\var_t\langle \zeta_{t+1}, h \rangle $ by
$\sigma_{t+1}^2[h]$. The former is a direct consequence of 
Proposition~\ref{prop:LinEstimate}:
\begin{equation}
\label{eq:MeanApprox}
\mathbb{E}|\mathbb{E}_t\langle \zeta_{t+1}, h \rangle -\langle \zeta_t, D_t h \rangle | \leq C \|h\|_{\infty}
\kappa_t n^{-1/2} \sqrt{1 + \log n} .
\end{equation}
For the latter, by defining $V_t(\bv{x})=n^{-1}\sum_{i=1}^n h_i^2
P_{i,t}(\bv{x})[1-P_{i,t}(\bv{x})]$ for $\bv{x} \in [0,1]^n$, we have
\[
\mathbb{E}|\sigma_{t+1}^2[h] - \var_t\langle \zeta_{t+1}, h \rangle |
= \mathbb{E}|V_t(\bv{X}_t)-V_t(\bv{p}_t)|.
\]
Computing the derivatives of $V_t$ reveals
\[
\|DV_t\|_1 \leq n^{-1} \|h\|_{\infty}^2 (1 + \alpha_t), \qquad
\|DV_t\|_{2,1} \leq n^{-1/2} \|h\|_{\infty}^2 (1 + \alpha_t),
\]
\[
\|D^{(2)} V_t\|_{1,1} \leq \|h\|_{\infty}^2 (\gamma_t + 2\beta_t^2).
\]
The desired estimate now follows by applying 
Theorem~\ref{thm:LqrErrorBounds}:
\begin{equation}
\label{eq:SigmaApprox}
\mathbb{E}|\sigma_{t+1}^2[h] - \var_t\langle \zeta_{t+1}, h \rangle |
\leq C \|h\|_{\infty}^2 \kappa_t n^{-1/2} .
\end{equation}

\begin{remark}
\label{rem:MeanField}
The $\log n$ term in Proposition~\ref{prop:OneStep} arises only from
equation (\ref{eq:MeanApprox}), and so may be removed, provided
one can derive an $\mathcal{O}(n^{-1/2})$ bound for this term.
For example, consider an occupancy process with
\begin{equation}
\label{eq:HanskiForm}
\textstyle{
S_{i,t}(\bv{x}) = f_i\left(n^{-1} \sum_{j=1}^n s_{ij} x_j\right),\quad
C_{i,t}(\bv{x}) = g_i\left(n^{-1} \sum_{j=1}^n c_{ij} x_j\right),}
\end{equation}
where $f_i,g_i \in \mathcal{C}^2([0,\infty))$ and $s_{ij},c_{ij} \geq 0$, 
with $s_{ii} = c_{ii} = 0$. This particular process was studied in
\citep{Barbour:2015aa}. 
Denote $\bv{s}_i = (s_{i1},\dots,s_{in})$ and similarly for $\bv{c}_i$. In this
instance, the Hessian matrices of $S_{i,t},C_{i,t}$ conveniently 
factorise into a sum of rank-one real-valued matrices scaled by real-valued functions,
and for any $j,k\neq i$,
\[
\partial_j\partial_k P_{i,t}(\bv{x}) = \frac{x_i s_{ij}s_{ik}}{n^2} f_i''\left(\frac1n\sum_{l=1}^n
s_{il}x_l\right) + \frac{(1-x_i)c_{ij}c_{ik}}{n^2} g_i''\left(\frac1n\sum_{l=1}^n
c_{il}x_l\right).
\]
So by Taylor's Theorem, 
\[
\mathbb{E}|\mathbb{E}_t\langle \zeta_{t+1}, h \rangle -\langle \zeta_t, D_t h \rangle |
\leq \frac{\|h\|_{\infty}}{n^{3/2}} \sum_{i=1}^n \|f_i''\|_{\infty}
\mathbb{E}\langle \zeta_t, \bv{s}_i\rangle^2 + \|g_i''\|_{\infty}\mathbb{E}\langle \zeta_t, \bv{c}_i\rangle^2,
\]
where here $\|f_i''\|_{\infty}$ is understood as the supremum of $f_i$ 
over the convex hull of
$\{n^{-1} \sum_{j=1}^n s_{ij}\}_{i=1}^n$, and likewise for $\|g_i''\|_{\infty}$.
Observing that
\[
\textstyle{n^{-1}\sum_{i=1}^n\|f_i''\|_{\infty}\|\bv{s}_i\|_{\infty}^2
+\|g_i''\|_{\infty}\|\bv{c}_i\|_{\infty}^2 \leq 2 n \Gamma_t},
\]
with the help of Theorem~\ref{thm:LqrErrorBounds}
it can be shown that
\begin{equation}
\label{eq:MeanApprox2}
\mathbb{E}|\mathbb{E}_t\langle \zeta_{t+1}, h \rangle -\langle \zeta_t, D_t h \rangle | \leq C \|h\|_{\infty}
\kappa_t n^{-1/2}.
\end{equation}
\end{remark}

\subsection{\label{sec:Wasserstein}The Wasserstein metric}
Let $g \in \mathcal{C}^1(\mathbb{R})$ be arbitrary, and take $f_g$ to be the 
solution to $\mathcal{A}_t f_g(x) = g(x) - \mathbb{E}_tg(\langle \tilde{\zeta}_{t+1}, h \rangle)$.
Since $f_g \in \mathcal{C}^2(\mathbb{R})$,
there exists a random variable $Y_{i,t}$ such that
\[
\int_0^1 f'\left[\langle \zeta_{t+1}, h \rangle 
+ \frac{h_i}{\sqrt{n}} (u- X_{i,t+1}) \right] \mathd u = f_g'(\langle \zeta_{t+1}, h \rangle) +
\frac{h_i (1 - 2X_{i,t+1})}{2\sqrt{n}} f_g''(Y_{i,t}) .
\]
Thus, $|\mathbb{E}_t \tilde{S}_t f| \leq T_1 + T_2$, where
\begin{align*}
T_1 &= |\sigma_{t+1}^2[h]-\var_t\langle \zeta_{t+1}, h \rangle ||\mathbb{E}_t
f_g'(\langle \zeta_{t+1}, h \rangle) |, \\
T_2 &= \frac{1}{2 n^{3/2}}\sum_{i=1}^n |h_i|^3 
P_{i,t}(\bv{X}_t)[1-P_{i,t}(\bv{X}_t)]|\mathbb{E}_t (1-2X_{i,t+1}) f_g''(Y_{i,t})|.
\end{align*}
Using~(\ref{eq:SigmaApprox}) to bound $T_1$ and Lemma \ref{lem:Stein} to 
bound the derivatives of $f_g$,
\[
\mathbb{E}|T_1| \leq \frac{C \|g'\|_{\infty}\|h\|_{\infty}^2 \kappa_t}{\sigma_{t+1}[h]\sqrt{n}} , \qquad
|T_2| \leq \frac{C \|g'\|_{\infty} \|h\|_{\infty}^3}{\sigma_{t+1}^2[h]\sqrt{n}} .
\]
Since $\sigma_{t+1}[h]\leq \|h\|_{\infty}$, it follows that
\[
|\mathbb{E} \tilde{S}_t f_g| \leq \mathbb{E}|\mathbb{E}_t\tilde{S}_t f_g| \leq
\frac{C \|g'\|_{\infty} \|h\|_{\infty}^3 \kappa_t}{\sigma_{t+1}^2[h] \sqrt{n}},
\]
from which (\ref{eq:MeanApprox}) implies 
\begin{equation}
\label{eq:WasserFinal}
|\mathbb{E}g(\langle \zeta_{t+1}, h \rangle)  - \mathbb{E}g(\langle \tilde{\zeta}_{t+1}, h \rangle) |
\leq C \sqrt{\frac{1+\log n}{n}} \cdot \frac{\|g'\|_{\infty} \|h\|_{\infty}^3 \kappa_t}{\sigma_{t+1}^2[h]},
\end{equation}
which is exactly Proposition~\ref{prop:OneStep} with $q = 1$.
%

\subsection{\label{sec:Kolmogorov}The Kolmogorov metric}
Let $z \in \mathbb{R}$ be arbitrary, and take $f_z$ to be the solution to
$\mathcal{A}_t f_z(x) = \ind\{x \leq z\} - \mathbb{P}_t(\langle \tilde{\zeta}_{t+1}, h \rangle 
\leq z)$. By rearranging the Stein equation for $f_z'$ and inserting into
(\ref{eq:SteinMain}), we obtain
\begin{multline}
\label{eq:KolmoMain}
\sum_{i=1}^n v_{i,t} \left[\int_0^1 \mathbb{P}_t\left(\langle \zeta_{t+1}, h \rangle  +
\frac{h_i}{\sqrt{n}} (u - X_{i,t+1}) \leq z\right) \mathd u
- \mathbb{P}_t(\langle \tilde{\zeta}_{t+1}, h \rangle  \leq z)\right] \\
= \sigma_{t+1}^2[h] T_1 + T_2 - \sum_{i=1}^n v_{i,t} \int_0^1 \mathbb{E}_t I_{i,t}(u) \mathd u
\end{multline}
where $v_{i,t} = n^{-1} h_i^2 P_{i,t}(\bv{X}_t)[1-P_{i,t}(\bv{X}_t)]$, 
and $T_1$, $T_2$, $I_{i,t}(u)$ for $u \in [0,1]$ are given by
\begin{align*}
T_1 &= [\langle \zeta_t, D_t h \rangle  - \mathbb{E}_t\langle \zeta_{t+1}, h \rangle ]\mathbb{E}_t 
f_z(\langle \zeta_{t+1}, h \rangle)  \\
T_2 &= [\sigma_{t+1}^2[h] - \var_t\langle \zeta_{t+1}, h \rangle ]
\mathbb{E}_t[\{\langle \zeta_{t+1}, h \rangle -\langle \zeta_t, D_t h \rangle \}f_z(\langle \zeta_{t+1}, h \rangle) ]
\end{align*}
\vspace{-2\baselineskip}
\begin{multline*}
I_{i,t}(u) = \left\{\langle \zeta_{t+1}, h \rangle +\frac{h_{i}}{\sqrt{n}}(u-X_{i,t+1})-\langle \zeta_t, D_t h \rangle \right\} \\
\times f_{z}\left(\langle \zeta_{t+1}, h \rangle +\frac{h_{i}}{\sqrt{n}}(u-X_{i,t+1})\right)\\
-\left\{\langle \zeta_{t+1}, h \rangle -\langle \zeta_t, D_t h \rangle \right\} f_{z}(\langle \zeta_{t+1}, h \rangle).
\end{multline*}
Applying Lemma \ref{lem:Stein} to $I_{i,t}$ implies 
\[
\mathbb{E}_t |I_{i,t}(u)| \leq \frac{1}{\sigma_{t+1}[h]}
\left(\frac{|\langle \zeta_{t+1}, h \rangle -\langle \zeta_t, D_t h \rangle |}{\sigma_{t+1}[h]}+\frac{\sqrt{2\pi}}{4}\right)
\frac{\|h\|_{\infty}}{\sqrt{n}}.
\]
Now, since $\|f_z\|_{\infty}$ is bounded uniformly in $z$, the right-hand
side of (\ref{eq:KolmoMain}) may be bounded in magnitude independently
of $z$. For the moment, let $M$ denote such a bound. 
Then, since
\begin{align*}
\mathbb{P}_{t}\left(\langle \zeta_{t+1}, h \rangle +\frac{h_{i}}{\sqrt{n}}u\leq z+
\frac{\|h\|_{\infty}}{\sqrt{n}}\right) 
&\geq \mathbb{P}_{t}(\langle \zeta_{t+1}, h \rangle \leq z), \\
\left|\mathbb{P}_{t}\left(\langle \tilde{\zeta}_{t+1}, h \rangle \leq z+
\frac{\|h\|_{\infty}}{\sqrt{n}}\right)-
\mathbb{P}_{t}(\langle \tilde{\zeta}_{t+1}, h \rangle \leq z)\right| 
&\leq \frac{\|h\|_{\infty}}{\sigma_{t+1}[h] \sqrt{2 \pi n}},
\end{align*}
substituting $z + n^{-1/2}\|h\|_{\infty}$ for $z$ in (\ref{eq:KolmoMain})
gives
\[
\var_t\langle \zeta_{t+1}, h \rangle [\mathbb{P}_t(\langle \zeta_{t+1}, h \rangle \leq z)
-\mathbb{P}_t(\langle \tilde{\zeta}_{t+1}, h \rangle \leq z)]
\leq M + \frac{C{\|h\|_{\infty}^3}}{\sigma_{t+1}[h] \sqrt{n}}.
\]
By performing a similar procedure for the lower bound,
\begin{equation}
\label{eq:KolmoMain2}
\sigma_{t+1}^2[h]|\mathbb{P}_t(\langle \zeta_{t+1}, h \rangle \leq z)-
\mathbb{P}_t(\langle \tilde{\zeta}_{t+1}, h \rangle \leq z)|
\leq M + \frac{C{\|h\|_{\infty}^3}}{\sigma_{t+1}[h] \sqrt{n}} + T_3,
\end{equation}
where
\[
T_3 = |\sigma_{t+1}^2[h] - \var_t\langle \zeta_{t+1}, h \rangle |
|\mathbb{P}_t(\langle \zeta_{t+1}, h \rangle \leq z) - \mathbb{P}_t(\langle \tilde{\zeta}_{t+1}, h\rangle
\leq z)|.
\]
Inequalities (\ref{eq:MeanApprox}) and (\ref{eq:SigmaApprox}) together with
the estimate for $\|f_z\|_{\infty}$ in Lemma \ref{lem:Stein} immediately imply
\[
\mathbb{E}|T_1| \leq \frac{C\|h\|_{\infty}\kappa_t}
{\sigma_{t+1}[h] \sqrt{n}}, \qquad
\mathbb{E}|T_3| \leq \frac{C\|h\|_{\infty}^2 \kappa_t}{\sqrt{n}}.
\]
Once again, since $\var_t\langle \zeta_{t+1}, h \rangle  \leq \|h\|_{\infty}^2$,
\[
\mathbb{E}_t|\langle \zeta_{t+1}, h \rangle -\langle \zeta_t, D_t h \rangle | \leq \|h\|_{\infty}
+ |\mathbb{E}_t\langle \zeta_{t+1}, h \rangle  - \langle \zeta_t, D_t h \rangle |,
\]
and by liberal use of (\ref{eq:MeanApprox}) and (\ref{eq:SigmaApprox}) with
Lemma \ref{lem:Stein},
\[
\mathbb{E}|T_2| \leq C\sqrt{\frac{1 + \log n}{n}} \cdot \frac{\|h\|_{\infty}^3
\kappa_t}{\sigma_{t+1}[h]},
\qquad
\mathbb{E}|I_{i,t}(u)| \leq 
\frac{C \|h\|_{\infty}^2 \kappa_t}{\sigma_{t+1}^2[h] \sqrt{n}}.
\]
Altogether, combining these estimates with (\ref{eq:KolmoMain}) and
(\ref{eq:KolmoMain2}) gives
\[
\|\law\langle \zeta_{t+1}, h \rangle -\law\langle \tilde{\zeta}_{t+1}, h \rangle \|_{\infty}
\leq C \sqrt{\frac{1+\log n}{n}} \cdot \frac{\|h\|_{\infty}^4 \kappa_t}{\sigma_{t+1}^4[h]},
\]
which is Proposition~\ref{prop:OneStep} with $q = \infty$. 

\subsection{A central limit theorem}
It now only remains to prove Corollary~\ref{cor:CentralLim}. 
To do so, we shall once again make use of the one-step 
approximations~$\tilde{\zeta}_t$. Unfortunately, 
Proposition~\ref{prop:OneStep} is not quite sufficient as $\sigma_t^{-2}[h]$
is potentially unbounded. Instead, by utilising 
Proposition~\ref{prop:OneStep} in conjunction with a crude bound, 
the dependence on
$\sigma_t[h]$ may be removed at the expense of 
a suboptimal exponent in $n$. For any function $g \in \mathcal{C}^1(\mathbb{R})$,
\begin{multline*}
|\mathbb{E}g(\langle \zeta_{t+1}, h \rangle) -\mathbb{E}g(\langle \tilde{\zeta}_{t+1}, h \rangle )|
\leq \|g'\|_{\infty}\{\mathbb{E}|\langle \zeta_{t+1}, h \rangle -\mathbb{E}_t\langle \zeta_{t+1}, h \rangle | + \\
\mathbb{E}|\mathbb{E}_t\langle \zeta_{t+1}, h \rangle -\langle \zeta_t, D_t h \rangle | + \sigma_{t+1}[h]\},
\end{multline*}
and, since
\[
\mathbb{E}|\langle \zeta_{t+1}, h \rangle -\mathbb{E}_t\langle \zeta_{t+1}, h \rangle |
\leq \mathbb{E}|\sigma_{t+1}^2[h]-\var_t\langle \zeta_{t+1}, h \rangle |^{1/2}
+ \sigma_{t+1}[h],
\]
it follows from (\ref{eq:SigmaApprox}) that
\[
|\mathbb{E}g(\langle \zeta_{t+1}, h \rangle) -\mathbb{E}g(\langle \tilde{\zeta}_{t+1}, h \rangle) |
\leq C \|g'\|_{\infty}(\sigma_{t+1}[h] + \|h\|_{\infty} \kappa_t n^{-1/4} \sqrt{1+\log n}).
\]
By this argument, and proceeding as in the proof of Proposition~\ref{prop:OneStep}, conditioning
instead on $\bv{X}_1,\dots,\bv{X}_t$,
we obtain Proposition~\ref{prop:GlobalBound}.
\begin{proposition}
\label{prop:GlobalBound}
There is a universal constant $C > 0$ such that, for any
$t \geq 0$, 
any function $g \in \mathcal{C}^1(\mathbb{R})$, and any
$h_1,\dots,h_{t+1} \in \mathbb{R}^n$,
\begin{multline*}
\left|\mathbb{E}g\left(\sum_{s=1}^t \langle \zeta_s, h_s \rangle + \langle \zeta_{t+1}, h_{t+1} \rangle\right)-
\mathbb{E}g\left(\sum_{s=1}^t \langle \zeta_s, h_s \rangle + \langle \tilde{\zeta}_{t+1}, h_{t+1}\rangle\right)\right| \\
\leq C \|g'\|_{\infty} 
\|h_{t+1}\|_{\infty} \kappa_t \cdot n^{-1/6} \sqrt{1 + \log n}. 
\end{multline*}
\end{proposition}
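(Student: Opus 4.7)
The plan is to mimic the one-step analysis of Proposition~\ref{prop:OneStep}, but to condition on the entire history $\bv{X}_1,\dots,\bv{X}_t$ rather than on $\bv{X}_t$ alone. Under this extended conditioning, $Y := \sum_{s=1}^t \langle \zeta_s, h_s \rangle$ becomes a constant, so $g(Y+\cdot)\in\mathcal{C}^1(\mathbb{R})$ with the same supremum norm of its derivative as $g$. Crucially, the conditional independence property of the occupancy process ensures that the law of $\bv{X}_{t+1}$ given $\bv{X}_1,\dots,\bv{X}_t$ depends only on $\bv{X}_t$, so the Stein operator $\mathcal{A}_t$ and the key identity~(\ref{eq:SteinMain}) transcribe verbatim, as do the ingredient estimates~(\ref{eq:MeanApprox}) and~(\ref{eq:SigmaApprox}) after taking unconditional expectations.

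Carrying through the Wasserstein ($q=1$) argument of \S\ref{sec:Wasserstein} in this way, I would recover the one-step bound
\[
\bigl|\mathbb{E}g\bigl(Y+\langle \zeta_{t+1}, h_{t+1}\rangle\bigr) - \mathbb{E}g\bigl(Y+\langle \tilde{\zeta}_{t+1}, h_{t+1}\rangle\bigr)\bigr| \leq C\|g'\|_\infty \frac{\|h_{t+1}\|_\infty^3 \kappa_t \sqrt{1+\log n}}{\sigma_{t+1}^2[h_{t+1}]\, n^{1/2}},
\]
exactly as in~(\ref{eq:WasserFinal}). In parallel, the crude bound derived in the paragraph preceding the proposition applies under the same conditioning, giving
\[
\bigl|\mathbb{E}g\bigl(Y+\langle \zeta_{t+1}, h_{t+1}\rangle\bigr) - \mathbb{E}g\bigl(Y+\langle \tilde{\zeta}_{t+1}, h_{t+1}\rangle\bigr)\bigr| \leq C\|g'\|_\infty\bigl(\sigma_{t+1}[h_{t+1}] + \|h_{t+1}\|_\infty \kappa_t n^{-1/4}\sqrt{1+\log n}\bigr),
\]
since that derivation used only the Lipschitz control $\|g'\|_\infty$ together with~(\ref{eq:MeanApprox}) and~(\ref{eq:SigmaApprox}).

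The main obstacle, and the reason the preceding propositions carry the awkward factor $\sigma_{t+1}^{-2}[h_{t+1}]$, is precisely the possibility that this quantity is unbounded. I would neutralise it by taking the pointwise minimum of the two bounds above. The two expressions balance when $\sigma_{t+1}^3[h_{t+1}] \sim \|h_{t+1}\|_\infty^3 n^{-1/2}$, i.e.~at $\sigma_{t+1}[h_{t+1}] \sim \|h_{t+1}\|_\infty n^{-1/6}$. In the regime $\sigma_{t+1}[h_{t+1}] \geq \|h_{t+1}\|_\infty n^{-1/6}$ the Stein bound is already of order $\|g'\|_\infty \|h_{t+1}\|_\infty \kappa_t n^{-1/6}\sqrt{1+\log n}$, while in the complementary regime $\sigma_{t+1}[h_{t+1}] < \|h_{t+1}\|_\infty n^{-1/6}$ the crude bound is of the same order (absorbing the $n^{-1/4}$ term since $\kappa_t \geq 1$). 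Combining the two cases yields Proposition~\ref{prop:GlobalBound}.
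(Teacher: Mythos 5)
Your proof is correct and takes essentially the same approach as the paper: the paper likewise combines the Wasserstein-form Stein bound (with conditioning extended to $\bv{X}_1,\dots,\bv{X}_t$, which leaves~(\ref{eq:MeanApprox}) and~(\ref{eq:SigmaApprox}) intact by the Markov property) with the crude bound derived in the paragraph preceding the proposition. Your explicit balancing at $\sigma_{t+1}[h_{t+1}] \sim \|h_{t+1}\|_\infty n^{-1/6}$ merely spells out the optimisation over the two regimes that the paper leaves implicit.
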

Through the Cram\`{e}r-Wold device, to prove Corollary~\ref{cor:CentralLim}, 
we prove instead the stronger result that,
for any collection of sequences $\{h_1^\n,\dots,h_t^\n\}_{n=1}^{\infty}$
with each $h_s^\n \in \mathbb{R}^n$ and $\|h_s^\n\|_{\infty} \leq H$
for some $H > 0$,
\begin{equation}
\label{eq:CentralLimProp}
\lim_{n\to\infty}
\left\lVert \law\left[\sum_{s=1}^{t}\langle \zeta_s^\n, h_{s}^\n\rangle\right]-
\law\left[\sum_{s=1}^{t}\langle \xi_s^\n, h_{s}^\n\rangle\right]\right\rVert_{1} = 0.
\end{equation}
Proceeding by induction on the case $t = 1$ which holds by 
Proposition~\ref{prop:GlobalBound}, assume (\ref{eq:CentralLimProp}) holds for some
$t \geq 1$. For each $n$, let $h_{t+1}^\n \in \mathbb{R}^n$ be such that
$\|h_{t+1}^\n\|_{\infty} \leq H$. 
Immediately, from Proposition~\ref{prop:GlobalBound},
\[
\lim_{n\to \infty}
\left\lVert \law\left[\sum_{s=1}^{t+1}\langle \zeta_s^\n, h_s^\n \rangle\right]-
\law\left[\sum_{s=1}^{t}\langle \zeta_s^\n, h_s^\n \rangle+\langle 
\tilde{\zeta}_{t}^\n, h_{t+1}^\n\rangle\right]\right\rVert_{1} = 0,
\]
and furthermore, from the induction hypothesis,
\begin{multline*}
\lim_{n\to\infty} \left\lVert \law\left[\sum_{s=1}^{t-1}\langle \zeta_s^\n, h_s^\n \rangle+
\langle \zeta_{t}^\n, h_t^\n + D_{t}^\n h_{t+1}^\n\rangle\right]\right.\\ - 
\left.\law\left[\sum_{s=1}^{t-1}\langle \xi_s^\n, h_s^\n \rangle+\langle \xi_{t}^\n, h_t^\n + D_{t}^\n h_{t+1}^\n\rangle\right]\right\rVert_{1} = 0.
\end{multline*}
But now the contraction identity (\ref{eq:Contraction}) tells us that
\[
\lim_{n\to\infty} \left\lVert \law\left[\sum_{s=1}^{t}\langle \zeta_s^\n, h_s^\n \rangle+\langle \tilde{\zeta}_{t}^\n, h_{s}^\n\rangle\right]-\law\left[\sum_{s=1}^{t+1}\langle \xi_s^\n, h_s^\n \rangle\right]\right\rVert _{1} = 0.
\]
The variance of $ \sum_{s=1}^{t+1}\langle \xi_s^\n, h_s^\n \rangle $ converges to the quantity given by Corollary~\ref{cor:CentralLim}, and hence the result holds for any $t \geq 1$.

\section{\label{sec:LongTerm}Long-term behaviour}
An advantage to working with approximations of stochastic
processes is that it is generally easier to study the
stationary and long-term behaviour of the approximation than it is the
original process. 
For example, we can identify a critical phase in
the stationary case, where $P_{i,t} = P_i$,
$i=1,\dots,n$, does not depend on $t$.
While Brouwer's Fixed Point Theorem guarantees 
that there is at least one fixed point,
most occupancy processes have a
trivial fixed point anyway (for example, corresponding to 
extinction).
Under sufficiently strict assumptions, it is possible to identify 
precisely globally stable equilibria of the deterministic system. We
extend partial relations to vectors by pointwise comparison, and say
that $\bv{x} \gneqq \bv{y}$ if $\bv{x} \geq \bv{y}$ and $\bv{x} \neq
\bv{y}$. Defining $J_0 = \lim_{\bv{x}\to 0^+} DP(\bv{x})$ where
$P = (P_1,\dots,P_n)$, and
letting $r(J_0)$ denote the spectral radius of the matrix
$J_0$, the results of \citep{Smith:1986aa} are interpreted within our
context as follows.

\begin{theorem}[Smith, 1986, Theorem 2.2]
\label{thm:SmithStability}
Suppose that, for every $\bv{x} > \bv{0}$ and $i,j=1,\dots,n$,
$\partial_j P_i(\bv{x}) > 0$, and that $DP(\bv{x}) \gneqq DP(\bv{y})$
for all $\bv{0} < \bv{x} < \bv{y}$. Assume also that $P_i(\bv{1}) \neq
1$ for some $i=1,\dots,n$. Then, the limit $\bv{p}_{\infty} :=
\lim_{t\to\infty} \bv{p}_t$ exists and is independent of $\bv{p}_0 \neq
\bv{0}$. Furthermore, $\bv{p}_{\infty} = \bv{0}$ if and only if $\bv{0}$
is a fixed point of $P$ and $r(J_0) \leq 1$.
\end{theorem}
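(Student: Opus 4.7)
The plan is to extract two structural properties of $P$ from the hypotheses and then run a classical monotone--concave fixed-point argument of Krause type. First, the assumption $\partial_j P_i(\bv{x})>0$ for $\bv{x}>\bv{0}$ yields strict \emph{monotonicity} on the positive cone: $\bv{0}<\bv{x}\leq \bv{y}$ implies $P(\bv{x})\leq P(\bv{y})$, with strict inequality coordinatewise when $\bv{x}<\bv{y}$, obtained by integrating the Jacobian along the segment. Second, the hypothesis $DP(\bv{x})\gneqq DP(\bv{y})$ for $\bv{0}<\bv{x}<\bv{y}$ makes $\lambda\mapsto P(\lambda \bv{x})$ strictly concave on $[0,1]$ for every $\bv{x}>\bv{0}$, giving the strict \emph{subhomogeneity} $P(\lambda\bv{x})\gneqq \lambda P(\bv{x})+(1-\lambda)P(\bv{0})$ for $\lambda\in(0,1)$, and in particular $P(\bv{x})\leq J_0\bv{x}+P(\bv{0})$ with strict inequality coordinatewise.

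Away from the trivial sink case, I would construct a sub-solution $\bv{a}\leq P(\bv{a})$ and a super-solution $\bv{b}\geq P(\bv{b})$ with $\bv{0}<\bv{a}\leq \bv{b}<\bv{1}$. For $\bv{a}$: if $P(\bv{0})>\bv{0}$ take $\bv{a}=P(\bv{0})$; otherwise $r(J_0)>1$, and since $\partial_j P_i(\bv{0}^+)>0$ makes $J_0$ irreducible, Perron--Frobenius provides a positive eigenvector $\bv{v}$ with $J_0\bv{v}=r(J_0)\bv{v}$, for which strict subhomogeneity gives $P(\epsilon\bv{v})\geq \epsilon r(J_0)\bv{v}>\epsilon\bv{v}$ for small $\epsilon>0$. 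For $\bv{b}$: iterate $\bv{1}$; the hypothesis $P_i(\bv{1})<1$ for some $i$, combined with $\partial_j P_k>0$, propagates strict inequality to every coordinate within $n$ steps, yielding $P^k(\bv{1})<\bv{1}$. By monotonicity the orbits from $\bv{a}$ and $\bv{b}$ are monotone increasing and decreasing respectively, bounded in $(\bv{0},\bv{1})$, hence converge to fixed points $\bv{p}_*\leq \bv{p}^*$ in the open positive cone.

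Uniqueness of the positive fixed point, and therefore $\bv{p}_*=\bv{p}^*=:\bv{p}_\infty$, follows by the standard Krause trick: if $\bv{p}_*<\bv{p}^*$, set $\lambda_0=\sup\{\lambda\in(0,1]:\lambda\bv{p}^*\leq \bv{p}_*\}$; then $\lambda_0\bv{p}^*\leq \bv{p}_*$ with equality in some coordinate $i$, whereas monotonicity and strict subhomogeneity give $P(\lambda_0\bv{p}^*)>\lambda_0 P(\bv{p}^*)=\lambda_0\bv{p}^*$ and $P(\lambda_0\bv{p}^*)\leq P(\bv{p}_*)=\bv{p}_*$, contradicting equality at coordinate $i$. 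To globalise from arbitrary $\bv{p}_0\neq \bv{0}$, finitely many iterations carry the orbit into the strict interior by monotonicity and the positivity of $\partial_j P_i$ there; once inside, $\lambda\bv{p}_\infty\leq \bv{p}_t\leq \mu\bv{p}_\infty$ for some $0<\lambda\leq \mu$, and the monotone orbits from these scalar multiples both converge to $\bv{p}_\infty$ by the previous paragraph, squeezing $\bv{p}_t$ to the same limit.

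The trivial case is handled separately: if $P(\bv{0})=\bv{0}$ and $r(J_0)<1$, then $P(\bv{x})\leq J_0\bv{x}$ gives $\bv{p}_t\leq J_0^t\bv{p}_0\to\bv{0}$ geometrically. The main obstacle will be the borderline case $r(J_0)=1$ with $P(\bv{0})=\bv{0}$: the linearisation is neutrally stable and decay must be extracted from strict concavity alone. My approach is to take the positive left Perron eigenvector $\bv{u}$ of $J_0$ as a Lyapunov functional; since $P(\bv{x})<J_0\bv{x}$ strictly coordinatewise for $\bv{x}>\bv{0}$, we get $\langle \bv{u},\bv{p}_{t+1}\rangle<\langle \bv{u},J_0\bv{p}_t\rangle=\langle \bv{u},\bv{p}_t\rangle$, with the deficit bounded quantitatively below on any compact subset of the interior. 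Thus $\langle \bv{u},\bv{p}_t\rangle$ is strictly decreasing and nonnegative, and any accumulation point $\bv{q}\geq \bv{0}$ of $\bv{p}_t$ must be a fixed point; if $\bv{q}>\bv{0}$ then strict concavity along the ray $\lambda\bv{q}$ forces $J_0\bv{q}>\bv{q}$ coordinatewise, contradicting $r(J_0)=1$ via Collatz--Wielandt, so $\bv{q}=\bv{0}$ and $\bv{p}_t\to\bv{0}$ as required.
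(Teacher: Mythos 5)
The paper does not prove Theorem~\ref{thm:SmithStability}; it is cited from Smith (1986), with the hypotheses translated into the occupancy-process setting, so there is no in-paper argument to compare against. Your proposal supplies an independent proof, and the circle of ideas you choose---monotonicity plus strict subhomogeneity, sub/super-solution squeezing, and a Krause-type uniqueness argument---is the right one for cooperative concave maps. However, there is a recurring gap: the hypothesis $DP(\bv{x}) \gneqq DP(\bv{y})$ for $\bv{0}<\bv{x}<\bv{y}$ gives strict decrease of the Jacobian in \emph{some} entry, not all, so the subhomogeneity you can legitimately derive is only $P(\lambda\bv{x}) \gneqq \lambda P(\bv{x}) + (1-\lambda)P(\bv{0})$ and $P(\bv{x}) \lneqq J_0\bv{x} + P(\bv{0})$. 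Yet at three points you assert coordinatewise strictness as if it were automatic: (i) ``$P(\bv{x}) \leq J_0\bv{x} + P(\bv{0})$ with strict inequality coordinatewise''; (ii) in the Krause step, ``$P(\lambda_0\bv{p}^*) > \lambda_0 P(\bv{p}^*)$'', where the contradiction is extracted precisely at the critical coordinate $i$ with $\lambda_0 p_i^* = p_{*,i}$, so strictness at a single, possibly different, coordinate does not suffice; and (iii) in the borderline case $r(J_0)=1$, ``$P(\bv{x})<J_0\bv{x}$ strictly coordinatewise'' and ``$J_0\bv{q}>\bv{q}$ coordinatewise, contradicting $r(J_0)=1$ via Collatz--Wielandt''---with only $J_0\bv{q}\gneqq\bv{q}$, Collatz--Wielandt yields $r(J_0)\geq 1$, which is not a contradiction. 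All three can be repaired by the propagation principle you already use for $P^k(\bv{1})<\bv{1}$: since $DP>0$ on the open cube and $J_0 > DP(\bv{x}) > 0$ entrywise, one application of $P$ (respectively $J_0$) upgrades a $\gneqq$ inequality in the interior to a full coordinatewise $>$, after which you can run Krause and apply Collatz--Wielandt to $J_0^2$. You need to make this bootstrapping explicit where it is actually used, not only in the super-solution construction.

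Separately, in the sub-solution construction the displayed inequality ``$P(\epsilon\bv{v}) \geq \epsilon r(J_0)\bv{v}$'' has the wrong sense: concavity along the ray gives the \emph{upper} bound $P(\epsilon\bv{v}) \leq \epsilon J_0\bv{v} + P(\bv{0})$. The intended conclusion $P(\epsilon\bv{v})>\epsilon\bv{v}$ for small $\epsilon$ is still correct when $r(J_0)>1$ and $P(\bv{0})=\bv{0}$, but it should come from a first-order Taylor expansion at $\bv{0}$ (legitimate since $P\in\mathcal{C}^3$ on the closed cube, so $J_0=DP(\bv{0})$), not from the subhomogeneity bound. You should also say a word about the case $P(\bv{0})\gneqq\bv{0}$ but not $>\bv{0}$, which your stated dichotomy ``either $P(\bv{0})>\bv{0}$ or $r(J_0)>1$'' does not cover; one or two iterations of $P$ push such a point into the open cube, and the remainder of the argument applies.
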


If a globally stable equilibrium does indeed
exist, it is natural to consider the limit random variable
$\bv{Z}_{\infty}$. Fortunately, as an autoregressive process,
conditions for ergodicity of
$\bv{Z}_t$ are well-known \citep{Meyn:1993aa}. 
Let $V_t = \diag(p_{i,t}(1-p_{i,t}))_{i=1}^n$ and 
$\Sigma_t = \cov(\bv{Z}_t)$, and
observe that $\Sigma_t$ satisfies 
\[
\Sigma_{t+1} = DP(\bv{p}_t) \, \Sigma_t \, DP(\bv{p}_t)^\top + V_t.
\]
Let $V_{\infty} := \lim_{t\to\infty} V_t$ and $J_{\infty} =
DP(\bv{p}_\infty)$.
If $\Sigma_t$ converges to some matrix $Q$ as $t\to\infty$, then it must satisfy the discrete Lyapunov equation
\begin{equation}
Q = J_{\infty} Q J_{\infty}^\top + V_{\infty}. \label{eq:DiscreteLyapunov}
\end{equation}
If such a $Q$ exists, then, 
by vectorising~(\ref{eq:DiscreteLyapunov}), we see that it must also
satisfy
\begin{equation}
\label{eq:LyapunovVector}
\mbox{vec }Q = (I - J_{\infty} \otimes J_{\infty})^{-1} \mbox{vec }V_{\infty}.
\end{equation}
There are three immediate consequences of~(\ref{eq:LyapunovVector}): (i)
the solution to~(\ref{eq:DiscreteLyapunov}) is necessarily unique, (ii) a
solution exists if and only if every pair of eigenvalues
$\lambda_1$, $\lambda_2$ of $J_{\infty}$ satisfy $\lambda_1 \lambda_2 \neq 1$,
and, (iii) $Q = 0$ if and only if $\bv{p}_{\infty} \in \{\bv{0},\bv{1}\}$.
Altogether, we have

\begin{proposition}
\label{prop:NormalStability}
Assume that $\bv{p}_{\infty} = \lim_{t\to\infty} \bv{p}_t$ exists. Then,
$\Sigma_{\infty} := \lim_{t\to\infty} \Sigma_t$ exists if and only if
$r(J_{\infty}) < 1$, in which case $\Sigma_{\infty}$ is the solution
to~(\ref{eq:DiscreteLyapunov}). Furthermore, when $\Sigma_{\infty}$
exists, if $\bv{p}_{\infty} = \bv{0}$, then $\bv{Z}_t \convp \bv{0}$ as
$t\to \infty$; otherwise
\begin{equation}
\label{eq:LongTermNormal}
\bv{Z}_t \convd \mathcal{N}(\bv{p}_{\infty}, \Sigma_{\infty})\qquad\mbox{as }t\to\infty.
\end{equation}
\end{proposition}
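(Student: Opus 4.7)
The plan is to exploit the Gaussianity of $\bv{Z}_t$ itself, so the proposition reduces to the convergence behaviour of $\Sigma_t$. An induction on $t$ using the defining recursion, with the deterministic initial value $\bv{Z}_0 = \bv{p}_0$, shows that $\bv{Z}_t - \bv{p}_t$ is a linear combination of the independent standard normals $\{z_{i,s}\}_{s \leq t,\, i \leq n}$, so $\bv{Z}_t \sim \mathcal{N}(\bv{p}_t, \Sigma_t)$. Since $\bv{p}_t \to \bv{p}_{\infty}$ by hypothesis, L\'evy's continuity theorem (applied to characteristic functions of Gaussians) gives $\bv{Z}_t \convd \mathcal{N}(\bv{p}_\infty, \Sigma_\infty)$ as soon as $\Sigma_t \to \Sigma_\infty$; convergence in distribution to a point mass automatically upgrades to convergence in probability, which covers the degenerate case $\bv{p}_\infty = \bv{0}$ (where $V_\infty = 0$ forces $\Sigma_\infty = 0$).

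For the sufficiency direction, unrolling the recursion from $\Sigma_0 = 0$ yields
\[
\Sigma_t = \sum_{s=0}^{t-1} M_{s,t}\, V_s\, M_{s,t}^\top, \qquad M_{s,t} := DP(\bv{p}_{t-1}) DP(\bv{p}_{t-2}) \cdots DP(\bv{p}_s),
\]
with the empty product equal to the identity. If $r(J_\infty) < 1$, Gelfand's formula supplies a matrix norm $\|\cdot\|_*$ with $\|J_\infty\|_* < 1$; continuity of $DP$ together with $\bv{p}_t \to \bv{p}_\infty$ then gives $\|DP(\bv{p}_t)\|_* \leq \rho < 1$ for all sufficiently large $t$, so $\|M_{s,t}\|_*$ decays geometrically in $t-s$. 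Combined with uniform boundedness of $\|V_s\|_*$, this yields absolute convergence of the sum to some $\Sigma_\infty$. Letting $t \to \infty$ in the recursion shows $\Sigma_\infty$ satisfies~\eqref{eq:DiscreteLyapunov}, and~\eqref{eq:LyapunovVector}---whose coefficient matrix $I - J_\infty \otimes J_\infty$ is invertible because $r(J_\infty \otimes J_\infty) = r(J_\infty)^2 < 1$---delivers uniqueness.

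For necessity, if $\Sigma_t$ converges to $Q$, passing to the limit in the recursion gives $Q = J_\infty Q J_\infty^\top + V_\infty$, and iterating,
\[
Q = J_\infty^k Q (J_\infty^\top)^k + \sum_{j=0}^{k-1} J_\infty^j V_\infty (J_\infty^\top)^j, \qquad k \geq 1.
\]
If some eigenvalue $\lambda$ of $J_\infty$ with $|\lambda| \geq 1$ admits a (complex) eigenvector $v$ such that $v^* V_\infty v > 0$, then $v^* J_\infty^j V_\infty (J_\infty^\top)^j v = |\lambda|^{2j} v^* V_\infty v$, so the partial sums diverge, contradicting finiteness of $v^* Q v$. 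The delicate case, which I expect to be the main obstacle, is when every eigenvector of $J_\infty$ of modulus at least one lies in $\ker V_\infty$: one must then verify that $\ker V_\infty$ is $J_\infty$-invariant, pass to a $J_\infty$-invariant complement (on which $J_\infty$ is necessarily a strict contraction), and argue that $\Sigma_t$ is effectively supported on that complement, thereby forcing $r(J_\infty) < 1$ where it matters. Combining both directions yields the stated characterization, and the limiting distribution follows from the reduction carried out in the first paragraph.
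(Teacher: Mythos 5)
The paper does not actually supply a rigorous proof of Proposition~\ref{prop:NormalStability}: it appeals to the Meyn--Tweedie theory of geometric ergodicity for autoregressive processes \citep{Meyn:1993aa}, derives the Lyapunov equation~(\ref{eq:DiscreteLyapunov}) as a necessary condition, and records consequences (i)--(iii) of the vectorised form~(\ref{eq:LyapunovVector}), presenting the proposition as ``altogether'' a consequence. Your attempt is a direct proof from first principles, which is a genuinely different and more explicit route; your reduction to convergence of $\Sigma_t$ via Gaussianity and L\'evy continuity, and the treatment of the degenerate point-mass case, are exactly the right framing.

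Your sufficiency argument is sound, modulo one indexing slip: unrolling $\Sigma_{t+1} = DP(\bv{p}_t)\Sigma_t DP(\bv{p}_t)^\top + V_t$ from $\Sigma_0 = 0$ gives $\Sigma_t = \sum_{s=0}^{t-1} N_{s,t} V_s N_{s,t}^\top$ with $N_{s,t} = DP(\bv{p}_{t-1})\cdots DP(\bv{p}_{s+1})$, i.e., one fewer factor than your $M_{s,t}$; this does not affect the conclusion since the Gelfand/continuity argument gives geometric decay of $\|N_{s,t}\|_*$ in $t-s$ regardless, and $\|V_s\|\leq \tfrac14$.

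For necessity, two remarks. First, a small but real technical error: to extract $|\lambda|^{2j}v^* V_\infty v$ from $v^* J_\infty^j V_\infty (J_\infty^\top)^j v$ you need $v$ to be an eigenvector of $J_\infty^\top$ (equivalently a left eigenvector of $J_\infty$), since $v^* J_\infty^j V_\infty (J_\infty^\top)^j v = ((J_\infty^\top)^j v)^* V_\infty ((J_\infty^\top)^j v)$; eigenvalues of $J_\infty$ and $J_\infty^\top$ agree but eigenvectors generally do not. Second, the gap you flag in the degenerate case is genuine, and in fact the ``if and only if'' as literally stated can fail there: take $\bv{p}_0 = \bv{p}_\infty$ a fixed point, so $DP(\bv{p}_t) = J_\infty$ and $V_t = V_\infty$ for all $t$, and suppose $J_\infty^\top$ has an eigenvector $v$ with $|\lambda|\geq 1$ lying in $\ker V_\infty$ (which requires $p_{i,\infty}\in\{0,1\}$ along the corresponding coordinates). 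Then $\Sigma_t = \sum_{j<t} J_\infty^j V_\infty (J_\infty^\top)^j$ can converge even though $r(J_\infty)\geq 1$. So the necessity direction requires an additional controllability/nondegeneracy hypothesis — for instance that no eigenvector of $J_\infty^\top$ with modulus $\geq 1$ lies in $\ker V_\infty$, which in particular holds when $\bv{p}_\infty\notin\{\bv{0},\bv{1}\}$ componentwise so that $V_\infty \succ 0$. The invariant-subspace reduction you sketch is the standard repair, but the resulting statement is a slightly weaker (or hypothesis-strengthened) version of the proposition, not the proposition as written; the paper's reliance on~(\ref{eq:LyapunovVector}) has the same blind spot, as $\lambda_1\lambda_2\neq 1$ for all eigenvalue pairs is strictly weaker than $r(J_\infty)<1$.
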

Assuming the conditions of Proposition~\ref{prop:NormalStability}, while each
$\bv{Z}_t^\n$ is geometrically ergodic, to prove Corollary~\ref{cor:LongTerm}, 
it is necessary to show that the rate of convergence
of $\langle \xi_t^\n, h \rangle$ to $\langle \xi_{\infty}^\n, h \rangle$ is uniform in $n$. This is
accomplished in Lemma~\ref{lem:UnifConvT}.

\begin{lemma}
\label{lem:UnifConvT}
Suppose that $\sup_n \|J_{\infty}^\n\|_1 < 1$, $\Gamma^\n = \bigO(n^{-1})$,
and \linebreak $\sup_n \|\bv{p}_t^\n - \bv{p}_{\infty}^\n\|_{\infty} \to 0$ as
$t \to \infty$. For any $h \in \ell^{\infty}$,
there is a $\rho < 1$, and $C > 0$, independent of
$n, t$, such that, for every $t \geq 1$ and $n \in \mathbb{N}$,
\begin{align}
\label{eq:PiUnifConv}
|\langle \bar{\bv{p}}_t^\n - \bar{\bv{p}}_{\infty}^\n, h \rangle| & \leq C \|h\|_{\infty} \rho^{2 t} \\
\label{eq:XiUnifConv}
\|\law \langle \xi_t^\n, h \rangle-\law \langle \xi_{\infty}^\n, h \rangle\|_1
&\leq C \|h\|_{\infty} t \rho^t.
\end{align}
\end{lemma}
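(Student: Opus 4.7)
My plan is to prove (\ref{eq:PiUnifConv}) first, then combine it with the Gaussian structure of the $\xi_t^\n$ to obtain (\ref{eq:XiUnifConv}).

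\textbf{Preliminary $\ell^{1}$ contraction.} Taylor's theorem yields $\bv{p}_{t+1}^\n - \bv{p}_\infty^\n = J_\infty^\n(\bv{p}_t^\n - \bv{p}_\infty^\n) + R_t^\n$, with the remainder satisfying $\|R_t^\n\|_1 \leq \tfrac12 \Gamma^\n \|\bv{p}_t^\n - \bv{p}_\infty^\n\|_1^2 \leq \tfrac12 (n\Gamma^\n) \|\bv{p}_t^\n - \bv{p}_\infty^\n\|_\infty \|\bv{p}_t^\n - \bv{p}_\infty^\n\|_1$. Since $n\Gamma^\n = \bigO(1)$ and $\sup_n \|\bv{p}_t^\n - \bv{p}_\infty^\n\|_\infty \to 0$, a common threshold $T$ (independent of $n$) can be chosen beyond which, combined with $\sup_n \|J_\infty^\n\|_1 < 1$, one obtains the uniform geometric bound $\|\bv{p}_t^\n - \bv{p}_\infty^\n\|_1 \leq Cn \rho_0^{2t}$ for some $\rho_0 < 1$ independent of $n$.

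\textbf{Proof of (\ref{eq:PiUnifConv}).} Iterating the linearization yields $\bv{p}_t - \bv{p}_\infty = J_\infty^t(\bv{p}_0 - \bv{p}_\infty) + \sum_{s=0}^{t-1} J_\infty^{t-1-s} R_s$; pairing with $h/n$ gives $\langle \bar{\bv{p}}_t - \bar{\bv{p}}_\infty, h\rangle = n^{-1}\langle (J_\infty^\top)^t h, \bv{p}_0 - \bv{p}_\infty\rangle + n^{-1}\sum_{s=0}^{t-1} \langle (J_\infty^\top)^{t-1-s} h, R_s\rangle$. Setting $\rho^2 \geq \sup_n \|J_\infty^\n\|_1 \vee \rho_0^2$, H\"{o}lder's inequality combined with $\|(J_\infty^\top)^k h\|_\infty \leq \|J_\infty\|_1^k \|h\|_\infty \leq \rho^{2k}\|h\|_\infty$ controls the initial term by $\rho^{2t}\|h\|_\infty$ (after cancelling the crude $\|\bv{p}_0-\bv{p}_\infty\|_1 \leq n$), while the preliminary $\ell^{1}$ decay of $R_s$ turns the geometric convolution in $s$ into the same $\rho^{2t}$ order.

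\textbf{Proof of (\ref{eq:XiUnifConv}).} Both $\langle \xi_t^\n, h\rangle$ and $\langle \xi_\infty^\n, h\rangle$ are mean-zero Gaussians (the former because $\xi_0 = 0$), with variances $v_t[h]$ and $v_\infty[h]$. The quantile coupling then reduces the Wasserstein distance to $\sqrt{2/\pi}\,|\sqrt{v_t[h]} - \sqrt{v_\infty[h]}| \leq C|v_t[h] - v_\infty[h]|^{1/2}$, so I only need to bound $|v_t[h] - v_\infty[h]|$. From the recursion $v_t[h] = \sigma_t^2[h] + v_{t-1}[D_{t-1}h]$ (recorded in the paper) and its stationary analogue $v_\infty[h] = \sigma_\infty^2[h] + v_\infty[J_\infty^\top h]$, the difference decomposes as $(\sigma_t^2[h] - \sigma_\infty^2[h]) + (v_{t-1}[D_{t-1}h] - v_{t-1}[J_\infty^\top h]) + (v_{t-1}[J_\infty^\top h] - v_\infty[J_\infty^\top h])$; iterating replaces $h$ by $(J_\infty^\top)^k h$ at the $k$-th stage. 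The coefficient error $\sigma_{t-k}^2 - \sigma_\infty^2$ is controlled by (\ref{eq:PiUnifConv}) applied with $h = \operatorname{sgn}(\bv{p}_{t-k} - \bv{p}_\infty)$ using $|p(1-p) - q(1-q)| \leq |p-q|$; the Jacobian error is bounded via polarization of the bilinear form associated with $v_{t-k-1}$, together with $\|D_{t-k-1} - J_\infty^\top\|_\infty \leq \Gamma^\n \|\bv{p}_{t-k-1} - \bv{p}_\infty\|_1 = \bigO(\rho^{2(t-k)})$ (Taylor on $J$ using the second-derivative bound encoded in $\Gamma^\n$, then the preliminary step). Each stage contributes $\bigO(\|h\|_\infty^2 \rho^{2t})$ (the $\rho^{2k}$ shrinkage of $(J_\infty^\top)^k h$ combines with the $\rho^{2(t-k)}$ rate from (\ref{eq:PiUnifConv})), so summing over $k \leq t$ gives $|v_t[h] - v_\infty[h]| \leq C\|h\|_\infty^2 t \rho^{2t}$, and square-rooting yields (\ref{eq:XiUnifConv}).

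\textbf{Main obstacle.} The delicate part is the last step: iterated subtraction of the recursions generates bilinear-form differences $v_{t-k-1}[D_{t-k-1}g] - v_{t-k-1}[J_\infty^\top g]$ that must be controlled uniformly in $n$. Uniformity survives only because $n\Gamma^\n = \bigO(1)$ and $n^{-1}\|\bv{p}_t^\n - \bv{p}_\infty^\n\|_1 = \bigO(\rho^{2t})$ (the preliminary step) are both dimension-free. Careful tracking of the $\rho^{2k}$ factors accumulated through iterated $J_\infty^\top$ applications is what buys the $\rho^{2t}$ envelope and ensures the cumulative error is only $C t \rho^{2t}$.
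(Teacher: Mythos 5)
Your proof of \eqref{eq:PiUnifConv} is essentially the paper's: both establish the geometric $\ell^1$ contraction $\|\bv{p}_t^\n - \bv{p}_\infty^\n\|_1 \leq C n\rho^{2t}$ from $\sup_n\|J_\infty^\n\|_1 < 1$, $n\Gamma^\n = \bigO(1)$, and the uniform $\ell^\infty$ convergence, and then divide by $n$ via H\"{o}lder. Iterating the linearization, as you do, is not needed once the $\ell^1$ bound is in hand.

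Your proof of \eqref{eq:XiUnifConv}, however, takes a genuinely different route. The paper constructs a pathwise coupling $\tilde{\bv{Z}}_t^\n$ driven by the \emph{same} noise $\bv{z}_t$ but with the frozen dynamics $(J_\infty^\top, V_\infty)$, and then bounds $\|\law\langle\xi_t^\n,h\rangle - \law\langle\tilde{\xi}_t^\n,h\rangle\|_1 \leq (\mathbb{E}\langle\xi_t^\n-\tilde{\xi}_t^\n,h\rangle^2)^{1/2}$ via the noise-level variance mismatch, and separately $\|\law\langle\tilde{\xi}_t^\n,h\rangle-\law\langle\xi_\infty^\n,h\rangle\|_1$ as a Gaussian-to-Gaussian comparison. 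Your approach dispenses with the intermediate coupled process entirely: since both $\langle\xi_t^\n,h\rangle$ and $\langle\xi_\infty^\n,h\rangle$ are mean-zero Gaussians, the Wasserstein distance equals $\sqrt{2/\pi}\,|\sqrt{v_t[h]}-\sqrt{v_\infty[h]}|$, and you estimate $|v_t[h]-v_\infty[h]|$ directly by telescoping the one-step variance recursion against its stationary analogue (the Lyapunov fixed point). This is arguably cleaner, because there is no bias term to handle: the only thing to control is the scalar variance. The two ingredients powering the telescoping --- $\|D_{t-k-1}-J_\infty^\top\|_\infty \leq \Gamma^\n\|\bv{p}_{t-k-1}-\bv{p}_\infty\|_1 = \bigO(\rho^{2(t-k)})$ (uniformly in $n$, via $n\Gamma^\n = \bigO(1)$) and $\|(J_\infty^\top)^k h\|_\infty \leq \rho^{2k}\|h\|_\infty$ --- are exactly the same dimension-free quantities the paper exploits, so the robustness-to-$n$ story is identical.

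Two points of imprecision are worth flagging. First, the polarization step requires a uniform bound of the form $|v_s[g_1,g_2]| \leq C\|g_1\|_\infty\|g_2\|_\infty$ for all $s$ and $n$; this does hold, because $v_s[g_1,g_2] = \sum_{r\leq s}\sigma_r[D_{r,s}g_1, D_{r,s}g_2]$ and $\sum_{r\leq s}\|D_{r,s}\|_\infty^2$ is uniformly bounded once $\|D_r\|_\infty \leq \rho^2$ for $r\geq T$ and $\|D_r\|_\infty \leq \|J_\infty\|_1 + n\Gamma^\n = \bigO(1)$ for $r < T$ --- but you should say so, since the uniformity is the whole point of the lemma. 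Second, your power accounting is slightly loose. The coefficient term $|\sigma_{t-k}^2[(J_\infty^\top)^k h] - \sigma_\infty^2[(J_\infty^\top)^k h]|$ is $\bigO(\rho^{4k}\rho^{2(t-k)}\|h\|_\infty^2) = \bigO(\rho^{2t+2k}\|h\|_\infty^2)$, with an extra $\rho^{2k}$ of decay that makes the sum over $k$ geometric rather than linear; so in fact $|v_t[h]-v_\infty[h]| = \bigO(\rho^{2t}\|h\|_\infty^2)$, and $\sqrt{t\rho^{2t}} = \sqrt{t}\,\rho^t$ in any case, both of which still give \eqref{eq:XiUnifConv} (indeed, slightly improve on it). Neither slip invalidates the argument.
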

\begin{proof}
For each $n \in \mathbb{N}$, representing the process $\bv{Z}_t^\n$ by the
recursion
\begin{equation}
\label{eq:GeoRecur1}
\bv{Z}_{t+1}^\n - \bv{p}_{t+1}^\n = D_t (\bv{Z}_t^\n - \bv{p}_t^\n) + 
V_t^{1/2} \bv{z}_t,
\end{equation}
where $\bv{z}_t \iidsim \mathcal{N}(0,I)$  for each $t=0,1,\dots$, we
introduce a coupled process $\tilde{\bv{Z}}_t^\n$ satisfying
$\tilde{\bv{Z}}_0^\n = \bv{Z}_0^\n$ and
\begin{equation}
\label{eq:GeoRecur2}
\tilde{\bv{Z}}_{t+1}^\n - \bv{p}_{\infty}^\n
= J_{\infty}^\top (\tilde{\bv{Z}}_t^\n - \bv{p}_{\infty}^\n)
+ V_{\infty}^{1/2} \bv{z}_t.
\end{equation}
Additionally, by Proposition~\ref{prop:NormalStability}, $\bv{Z}_t^\n$ has
a limit as $t\to \infty$ for each $n \in \mathbb{N}$, which
we shall denote by $\bv{Z}_{\infty}^\n$ with $\cov(\bv{Z}_{\infty}^\n)
= \Sigma_{\infty}^\n$.
Denoting $\tilde{\xi}_t^\n = n^{-1/2}(\tilde{\bv{Z}}_t^\n - \bv{p}_t^\n)$
and $\xi_{\infty}^\n = n^{-1/2}(\bv{Z}_{\infty}^\n - \bv{p}_{\infty}^\n)$,
it
suffices to bound $\|\law \langle \xi_t^\n, h \rangle - \law \langle \tilde{\xi}_t^\n, h \rangle\|_1$
and $\|\law \langle \tilde{\xi}_t^\n, h \rangle - \law \langle \xi_{\infty}^\n, h \rangle\|_1$.
For the former, observe that
\begin{multline*}
\|J_{\infty}^\n-D_t^\n\|_1 
= \max_{j=1,\dots,n} \sum_{i=1}^n |\partial_j P_i^\n(\bv{p}_t^\n)
- \partial_j P_i^\n(\bv{p}_{\infty}^\n)| \\
\leq \Gamma^\n \|\bv{p}_t^\n - \bv{p}_{\infty}^\n\|_1,
\end{multline*}
and hence, for any $t \geq 0$,
\[
\|\bv{p}_{t+1}^\n - \bv{p}_{\infty}^\n\|_1
\leq \|J_{\infty}^\n\|_1 \|\bv{p}_t^\n - \bv{p}_{\infty}^\n\|_1
+ \Gamma^\n \|\bv{p}_t^\n - \bv{p}_{\infty}^\n\|_1^2.
\]
Letting $\rho^2 := \frac12 (1 + \sup_n \|J_{\infty}^\n\|_1) < 1$,
by assumption, there exists some $T$ such that, for every $t \geq T$
and $n \in \mathbb{N}$, $\|\bv{p}_t^\n-\bv{p}_{\infty}^\n\|_1
\leq (1 - \rho^2) / \Gamma^\n$. Therefore,
$\|\bv{p}_{t+1}^\n - \bv{p}_{\infty}^\n\|_1
\leq \rho^2 \|\bv{p}_t^\n - \bv{p}_{\infty}^\n\|_1$ for all $t \geq T$,
and hence, for every $n \in \mathbb{N}$,
\begin{equation}
\label{eq:GeoBoundP}
\|\bv{p}_t^\n - \bv{p}_{\infty}^\n\|_1 \leq C n \rho^{2 t},\qquad
\mbox{for all }t \geq 0,
\end{equation}
implying (\ref{eq:PiUnifConv}). 
Similarly, $\|D_t^\n\|_1 \leq \|J_{\infty}^\n\|_1 +
\|J_{\infty}^\n - D_t^\n\|_1 \leq \rho^2$ for all $t\geq T$,
and so, for all $0\leq s < t$,
\begin{equation}
\label{eq:DMatBounds}
\|D_{s,t}^\n\|_{\infty} \leq C \rho^{2(t-s)}, \qquad
\|\tilde{D}_{s,t}^\n\|_{\infty} \leq \rho^{2(t-s)}.
\end{equation}
Additionally, according to (\ref{eq:DiscreteLyapunov}),
\begin{equation}
\label{eq:LimitCovBound}
\|\Sigma_{\infty}^\n\|_1 \leq \frac{1}{1-\rho^2} \max_i p_{i,\infty}^\n(1-p_{i,\infty}^\n) \leq \frac{1}{4(1-\rho^2)}.
\end{equation}
From the recursion formulae~(\ref{eq:GeoRecur1}) and (\ref{eq:GeoRecur2})
and the identity $(a-b)^2 \leq |a^2 - b^2|$ for $a,b \geq 0$,
\[
\var[\langle \xi_t^\n - \tilde{\xi}_t^\n, h \rangle] 
\leq \frac1n \sum_{s=1}^t \|D_{s,t}^\n\|_{\infty}^2 \|h\|_{\infty}^2
\|\bv{p}_{\infty}^\n - \bv{p}_s^\n\|_1.
\]
Thus, from Jensen's inequality and inequalities~(\ref{eq:GeoBoundP}) and 
(\ref{eq:DMatBounds}), 
\[
\|\law \langle \xi_t^\n, h \rangle-\law \langle \tilde{\xi}_t^\n, h \rangle\|_1
\leq C \|h\|_{\infty} t \rho^t,
\]
as required. Finally, since $\langle\tilde{\xi}_t^\n,h\rangle$ and $\langle \xi_{\infty}^\n, h \rangle$
are normally distributed,
\begin{align*}
\|\law\langle \tilde{\xi}_t^\n, h \rangle-\law\langle \xi_{\infty}^\n, h \rangle\|_1^2
&\leq |\var\langle \tilde{\xi}_t^\n, h \rangle - \var\langle \xi_{\infty}^\n, h \rangle| \\
&\leq \|J_{\infty}^\n\|_1^{2t} \|h\|_{\infty}^2 \|\Sigma_{\infty}\|_1,
\end{align*}
and so (\ref{eq:LimitCovBound}) implies that
\[
\|\law\langle \tilde{\xi}_t^\n, h \rangle - \law\langle \xi_{\infty}^\n, h \rangle\|_1
\leq \|h\|_{\infty} \frac{\rho^t}{1 - \rho},
\]
and hence (\ref{eq:XiUnifConv}).
\end{proof}
The result of Lemma~\ref{lem:UnifConvT} 
implies that for any sequence of times~$\tau_n
\to \infty$
as $n \to \infty$, $\|\law\langle \xi_{\tau_n}^\n, h \rangle-\law\langle \xi_{\infty}^\n, h \rangle\|_1
\to 0$ as $n\to \infty$. Additionally, if $\tau_n \leq \frac{1}{17\alpha}
\epsilon \log n$ for $\epsilon > 0$, then Proposition~\ref{prop:GlobalBound} implies that
\[
\|\law\langle \zeta_{\tau_n}^\n, h \rangle - \law\langle \xi_{\tau_n}^\n, h \rangle\|_1
= \bigO(1) \|h\|_{\infty} n^{-1/6+\epsilon} (1+\log n)^{3/2} \to 0 \mbox{ as } n \to \infty.
\]
Together, these two facts imply Corollary~\ref{cor:LongTerm}.

\section{Applications}
\label{sec:Examples}
We now apply our results to a variety of existing models. 
\begin{example}[Spreading Processes]
\label{ex:Spreading}
To demonstrate the utility of Theorem~\ref{thm:LqrErrorBounds} 
in identifying the critical phase of occupancy
processes, we consider the time-homogeneous contact-based epidemic
\emph{spreading processes} introduced by Wang et
al.~\citep{Wang:2003aa}, and generalised in~\citep{Gomez:2010aa}. A survey
of more recent applications, and extensions to multi-layer networks can
be found in \citep[Section 5.2]{Boccaletti:2014aa}. This class of
processes encompasses those amenable to heterogeneous mean field
approaches, and allows for both weighted and unweighted networks.
Additionally, there are a number of recent social network
\citep{Wei:2013aa} and computer science \citep{Mirchev:2010aa} models
which fall within this framework. They may be summarised as follows.
First it is assumed that the probability of node~$i$ (of a total of~$n$)
being infected by an infected node~$j$ in one time step is $r_{ij}$,
with the convention that $r_{ii} = 0$.
The collection $R = (r_{ij})$ is called the \emph{reaction matrix}.
For the
case of a single-layer network with a weighted adjacency matrix $W =
(w_{ij})$, by defining $\lambda_i$ as the number of contacts from node
$i$ per unit time, $R$ may be defined by
\[
r_{ij} \propto 1 - \left(1 - \frac{w_{ij}}{\sum_{j=1}^n w_{ij}}\right)^{\lambda_i},
\]
where the constant of proportionality is assumed to be independent 
of $i$ and $j$.
For interconnected and multiplex networks, reaction matrices become
significantly more complex in form \citep{Boccaletti:2014aa}. Assuming
that contacts are all independent, the colonisation function $C_i$ is
given by
\[
C_i(\bv{x}) = 1 - \prod_{\substack{j=1\\j\neq i}}^n (1 - r_{ij} x_j).
\]
If it is assumed that a node recovers with probability $\mu$ in one time
step, then we may take $S_i(\bv{x}) = 1- \mu$. However, as in
\citep{Gomez:2010aa}, we may also consider the possibility of reinfection
before the next census, giving a survival function of the form
\[
S_i(\bv{x}) = 1 - \mu [1 - C_i(\bv{x})] 
= 1 - \mu \prod_{\substack{j=1\\j\neq i}}^n (1 - r_{ij} x_j).
\]
It is a straightforward exercise to show $\alpha = \|R\|_1$ and $\psi = 
n^{-1/2} \|R\|_F$. 

Let $\bar{X}_t = n^{-1} \sum_{i=1}^n X_{i,t}$ denote the total proportion of infectives
at time~$t$, with $\bar{p}_t = n^{-1} \sum_{i=1}^n p_{i,t}$ 
its deterministic approximation constructed
according to (\ref{eq:Detalternative}). 
Theorem~\ref{thm:LqrErrorBounds}
gives,
\begin{equation}
\label{eq:SpreadingApprox}
\mathbb{E}|\bar{X}_t - \bar{p}_t| \leq C n^{-1/2} t (1 + \|R\|_F)
e^{4 t \|R\|_1}.
\end{equation}
Now, $\Gamma$ is
the largest element of the matrix $(R+I)^\top (R+I) - I$, while $\delta = 0$.
Assuming $r_{ij} \leq \bar{r}n^{-1}$ for each $i,j=1,\dots,n$ for some
$\bar{r} > 0$, we have
$n\Gamma \leq (1 + \bar{r})^2$ and $\|R\|_1,\,\|R\|_F \leq \bar{r}$. Denoting
$v_t = \sum_{s=1}^t \sigma_s^2[D_{s,t}\bv{1}] \leq \frac14 t e^{t\|R\|_1}$,
Theorem~\ref{thm:WasserBound} implies
\begin{equation}
\label{eq:SpreadingNormal}
\left\lVert\law (\bar{X}_t) -\mathcal{N}(\bar{p}_t,n^{-1} v_t)\right\rVert_1 \leq 
\frac{C(1+\log n)^{1/2}}{n}\,te^{19\bar{r}t}(1+\bar{r})^{4}\sum_{s=1}^{t}\frac{e^{-3\bar{r}s}}{\sigma_{s}^{2}[D_{s,t}\bv{1}]}.
\end{equation}
Alternatively, for the same occupancy process, the global rule may be extended
to $[0,1]^n$ in the form (\ref{eq:HanskiForm}) with $f_i(x) = 1-\mu e^{-x}$,
$g_i(x) = 1-e^{-x}$ and $s_{ij}=c_{ij}=n|\log(1-r_{ij})|$. In this case,
(\ref{eq:SpreadingNormal}) may be improved to $\bigO(n^{-1})$ by 
Remark~\ref{rem:MeanField}, although this comes at the cost of larger $\alpha,\psi,\Gamma,\delta$. 

G{\'o}mez et al.~\citep{Gomez:2010aa} showed that if the spectral radius
$r(R)$ is strictly less than $\mu$, 
the disease, as represented through the deterministic
system~(\ref{eq:Detalternative}), cannot become endemic. This is seen by
interpreting an epidemic as a non-zero fixed point of the deterministic
recurrence. But, since $J_0 = (1-\mu)I + R$, 
Theorem~\ref{thm:SmithStability} and~(\ref{eq:SpreadingApprox})
imply that, when
$n$ is large and $\|R\|_1,\,\|R\|_F$ are not, 
the process $\bv{X}_t$ quickly reaches the 
fixed point $\bv{X}_t = \bv{0}$ (corresponding to
the infection dying out) if $r(R) \leq \mu$, and may persist otherwise.
In the latter case, using Perron-Frobenius theory, 
it may be shown that $r(J_{\infty}) < 1$
holds assuming $r_{ij} > 0$ for all $i,j=1,\dots,n$, $i \neq j$. 
If the deterministic equilibrium satisfies
\[
[1-(1-\mu)p_{j,\infty}]\sum_{\substack{i=1\\i\neq j}}^n \frac{r_{ij}(1-p_{i,\infty})}{1-r_{ij}p_{j,\infty}} < \mu,
\]
then $\|J_{\infty}\|_1 < 1$, and so 
convergence of the recentered process to a normal equilibrium
follows from Corollary \ref{cor:LongTerm}.
\end{example}

\begin{example}[Hanski's Incidence Function Model]
\label{ex:Hanski}
Arguably the first, and perhaps the most widely used and studied
stochastic patch occupancy model, is the \emph{Incidence Function
Model} introduced by Hanski~\citep{Hanski:1994}. Recent work 
by \allowbreak McVinish \& Pollett~\citep{McVinish:2014a}
has considered the model within the occupancy process framework.

We present a time-inhomogeneous extension of the general formulation of
Hanski's model, which may be realized in our framework in the following
way. Let $\Omega \subset \mathbb{R}^d$ be a compact set, and associated
with each patch~$i$ is a location $z_i$ in $\Omega$. The survival function
$S_{i,t}$ for each patch $i$ is chosen to be independent of all other
patches, so that $S_{i,t}(\bv{x}) = s_{i,t}$ for each $i=1,\dots,n$ and
$t=0,1,\dots$. For the colonisation function $C_{i,t}$, let $c
:\,[0,\infty) \to [0,1]$ be a $\mathcal{C}^2$ function, and, for each
$t=0,1,\dots$ and $i=1,\dots,n$, let $A_{i,t}$ denote the colonisation
weight of patch $i$ at time $t$, corresponding either to patch size or
approximate population size. Then, for some $D:\,\Omega^2\to\mathbb{R}$, 
we let
\begin{equation}
\label{eq:HanskiCol}
C_{i,t}(\bv{x}) = c\left(\sum_{j\neq i}^n A_{j,t} D(z_i,z_j) x_j\right).
\end{equation}
The inner sum is often referred to as the \emph{connectivity measure} for
patch $i$.

To investigate the asymptotic behaviour of this system as the number of
patches grows, we consider a sequence of patch locations
$\{z_i\}_{i=1}^{\infty}$ that is equidistributed in $\Omega$ with
respect to a distribution $m$, so that, for every $h\in\mathcal{C}(\Omega)$,
$n^{-1} \sum_{i=1}^n h(z_i) \to \int h \, \mathd m$, as $n\to\infty$.

Suppose that, for each $n\in\mathbb{N}$, patches are placed at locations
$z_1,\dots,z_n$ with patch weights $A_{i,t} = n^{-1} a_t(z_i)$, where
$a_t \in \mathcal{C}(\Omega)$ describes weight density at time $t$.
Similarly, assume that the survival probabilities $s_{i,t} = s_t(z_i)$
where $s_t \in \mathcal{C}(\Omega)$ describes the probability of survival
at time $t$ according to locations in $\Omega$.

We now represent our functionals $\mu_t^\n$ and $\pi_t^\n$ as measures,
defined for $h\in\mathcal{C}(\Omega)$ by $\int h \mathd \mu_t^\n = n^{-1}
\sum_{i=1}^n h(z_i) X_{i,t}^\n$, and similarly for $\pi_t^\n$. Equip the
space $\mathcal{M}(\Omega)$ of finite measures acting on the Borel
$\sigma$-algebra of $\Omega$ with the vague topology, so that for
$\{\mu_n\}_{n=1}^{\infty} \subset \mathcal{M}(\Omega)$ and
$\mu\in\mathcal{M}(\Omega)$, and $\mu_n \convw \mu$ if and only if $\int h
\mathd \mu_n \to \int h \mathd \mu$ for all $h\in\mathcal{C}(\Omega)$
\citep[Theorem 16.16]{Kallenberg:2002}. 
If $D$ is an equicontinuous function (that is,
for every $\epsilon > 0$, there is a $\delta > 0$ such that if
$z,z_1,z_2 \in \Omega$ with $\|z_1-z_2\| < \delta$, then
$|D(z_1,z)-D(z_2,z)|<\epsilon$), the proof of \citep[Theorem
1]{McVinish:2014a} implies the following lemma.

\begin{lemma}
\label{lem:HanskiLim}
If $\pi_0^\n \convw \pi_0$ as $n\to\infty$ for some $\pi_0 \in
\mathcal{M}(\Omega)$ that is absolutely continuous with respect to~$m$,
then $\pi_t^\n \convw \pi_t$ for each $t\in\mathbb{N}$,
where $\pi_t \in \mathcal{M}(\Omega)$ is defined recursively in terms of
its Radon-Nikodym derivative by
\begin{equation}
\label{eq:HanskiEvolve}
\frac{\partial\pi_{t+1}}{\partial m}(z) = s_t(z)\cdot\frac{\partial\pi_t}{\partial m}(z) + c[C_t(z)] \cdot \left[1 - \frac{\partial \pi_t}{\partial m}(z)\right],
\end{equation}
where $C_t(z) = \int a_t(\tilde{z}) D(z,\tilde{z}) \pi_t(\mathd \tilde{z})$ is the limiting connectivity measure at time $t$.
\end{lemma}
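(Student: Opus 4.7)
The plan is to proceed by induction on $t$, the base case being the hypothesis. For the inductive step, fix $h \in \mathcal{C}(\Omega)$ and use the recurrence $p_{i,t+1}^{(n)} = s_t(z_i) p_{i,t}^{(n)} + (1-p_{i,t}^{(n)}) c\bigl(C_t^{(n)}(z_i)\bigr)$, with discrete connectivity $C_t^{(n)}(z) := n^{-1}\sum_{j} a_t(z_j) D(z,z_j) p_{j,t}^{(n)}$, to split $\int h\, \mathd \pi_{t+1}^{(n)}$ into a survival part $\int (h s_t)\, \mathd \pi_t^{(n)}$ and a colonisation part. The survival part converges to $\int h s_t \, \mathd \pi_t$ directly from the inductive hypothesis, since $h s_t \in \mathcal{C}(\Omega)$.

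The heart of the proof is the colonisation part. First, I would establish the uniform convergence $C_t^{(n)} \to C_t$ on $\Omega$. Pointwise convergence at each fixed $z$ follows from the inductive hypothesis applied to the continuous test function $\tilde z \mapsto a_t(\tilde z) D(z,\tilde z)$. To upgrade pointwise to uniform convergence, one covers the compact $\Omega$ by finitely many $\delta$-balls centred at $z^{(1)},\dots,z^{(N)}$ and uses the equicontinuity of $D$ in its first argument to deduce that $|C_t^{(n)}(z) - C_t^{(n)}(z^{(k)})| \leq \varepsilon\,\|a_t\|_\infty$ whenever $\|z-z^{(k)}\|<\delta$, uniformly in $n$; the same estimate shows that $C_t$ is continuous on $\Omega$. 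Continuity of $c$ then promotes uniform convergence of $C_t^{(n)}$ to $C_t$ into uniform convergence $c(C_t^{(n)}) \to c(C_t)$.

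With $c \circ C_t$ established as a continuous function on $\Omega$, the colonisation term
\[
n^{-1}\sum_i h(z_i)\bigl(1-p_{i,t}^{(n)}\bigr)\,c\bigl(C_t^{(n)}(z_i)\bigr)
\]
can, up to a vanishing error, have $C_t^{(n)}$ replaced by $C_t$. Splitting further, $n^{-1}\sum_i h(z_i) c(C_t(z_i))$ converges to $\int h\,c(C_t)\,\mathd m$ by equidistribution, while $\int (h\, c(C_t))\, \mathd \pi_t^{(n)}$ converges to $\int h\,c(C_t)\, \mathd \pi_t$ by the inductive hypothesis. Combining with the survival contribution reproduces $\int h\, \mathd \pi_{t+1}$ from (\ref{eq:HanskiEvolve}); absolute continuity $\pi_t \ll m$ propagates inductively, since $\pi_t^{(n)}(B) \leq n^{-1}|\{i: z_i \in B\}|$ passes to the vague limit $\pi_t(B) \leq m(B)$.

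The main obstacle is the uniform convergence of $C_t^{(n)}$, since $c$ is nonlinear and vague convergence does not commute with composition by continuous maps whose dependence varies with $n$. The equicontinuity of $D$ is the precise ingredient that supplies both the uniformity and the continuity of $C_t$ required to apply the inductive hypothesis to the test function $h\,c(C_t)$ at the next step.
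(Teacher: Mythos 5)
Your sketch is essentially correct, but it is worth pointing out that the paper itself does \emph{not} prove Lemma~\ref{lem:HanskiLim}---it merely remarks that the conclusion follows from the proof of Theorem~1 of McVinish \& Pollett (2014) once $D$ is assumed equicontinuous. Your induction is almost certainly the argument intended: the key observation, that equicontinuity of $D$ in its first argument upgrades pointwise to uniform convergence of the discrete connectivities $C_t^\n$ on the compact $\Omega$ (and thereby lets one pass the nonlinear, continuous $c$ through the limit), is precisely the role that hypothesis plays in the cited proof, and your split of the colonisation term into an equidistribution piece and an inductive-hypothesis piece reproduces (\ref{eq:HanskiEvolve}) correctly.

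Two small imprecisions. First, your closing remark that ``$\pi_t^\n(B) \le n^{-1}|\{i : z_i \in B\}|$ passes to the vague limit $\pi_t(B)\le m(B)$'' is not valid for arbitrary Borel $B$, since vague convergence only transfers bounds through continuous test functions. The correct route is to note $\int h\,\mathd\pi_t^\n \le n^{-1}\sum_i h(z_i) \to \int h\,\mathd m$ for all continuous $h\ge 0$, which by regularity gives $\pi_t \le m$ and hence places $\partial\pi_t/\partial m$ in $[0,1]$, ensuring that (\ref{eq:HanskiEvolve}) defines a genuine (nonnegative, $\ll m$) measure at the next step. Second, when you invoke the inductive hypothesis on the test function $\tilde z \mapsto a_t(\tilde z) D(z,\tilde z)$, you are implicitly using continuity of $D$ in its \emph{second} argument; the stated equicontinuity hypothesis controls only the first argument, so you should flag that joint continuity of $D$ is being assumed (as it is, implicitly, throughout the example).
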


Since $\{h(z_i)\}_{i=1}^{\infty} \in \ell^{\infty}$ for every $h \in \mathcal{C}(\Omega)$ and $\|D\|_{\infty} < \infty$, a quick application of Corollary~\ref{cor:LawLargeNums} with \citep[Theorem 2.2]{Berti:2006aa} under the assumptions of Lemma \ref{lem:HanskiLim} yields the following result, which improves \citep[Theorem 1]{McVinish:2014a}.

\begin{proposition}
\label{prop:HanskiLLN}
For every $t=1,2,\dots$, $\mu_t^\n \convw \pi_t$ almost surely.
\end{proposition}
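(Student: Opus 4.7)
The plan is, for each fixed $h \in \mathcal{C}(\Omega)$, to split the error as
\[
\int h \, \mathd \mu_t^\n - \int h \, \mathd \pi_t = \frac{1}{n}\sum_{i=1}^n h(z_i)(X_{i,t}^\n - p_{i,t}^\n) + \left(\int h \, \mathd \pi_t^\n - \int h \, \mathd \pi_t\right),
\]
and deal with each piece separately. The second, purely deterministic piece vanishes as $n\to\infty$ by Lemma~\ref{lem:HanskiLim}, since $\pi_t^\n \convw \pi_t$ and $h \in \mathcal{C}(\Omega)$. For the first piece I would invoke Corollary~\ref{cor:LawLargeNums}(b) applied to the singleton class $\mathcal{H}_n = \{(h(z_1),\dots,h(z_n))\}$, which is uniformly bounded by $\|h\|_{\infty,\Omega}$ and whose Rademacher complexity is trivially zero.

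To verify the hypotheses of Corollary~\ref{cor:LawLargeNums}(b), I would compute the derivatives of the local rule directly. Writing $P_{i,t}(\bv x) = x_i s_t(z_i) + (1-x_i) c\bigl(n^{-1}\sum_{j\neq i} a_t(z_j) D(z_i,z_j) x_j\bigr)$, one finds for $j\neq i$ that $\|\partial_j P_{i,t}\|_{\infty} = \bigO(n^{-1})$ and $\|\partial_j^2 P_{i,t}\|_{\infty} = \bigO(n^{-2})$, uniformly in $i,j$, using only boundedness of $c, c', c''$, of $a_t$, and of $D$ on the relevant compact domains. This immediately yields $\sup_n \alpha_s^\n < \infty$ and $\psi_s^\n = \bigO(n^{-1/2})$, so $\{\psi_s^\n\} \in \ell^q$ for any $q > 2$; part (b) then applies and gives $n^{-1}\sum_{i=1}^n h(z_i)(X_{i,t}^\n-p_{i,t}^\n) \convas 0$ for each fixed $h$.

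The only step that requires more than a direct appeal is the upgrade from pointwise-in-$h$ almost sure convergence (one exceptional null set per test function) to a single almost sure event on which $\mu_t^\n \convw \pi_t$; this is precisely where \citep[Theorem 2.2]{Berti:2006aa} enters. Since $\Omega$ is compact, $\mathcal{C}(\Omega)$ is separable in the sup norm, so the clean way to proceed is to fix a countable dense subset $\mathcal{D} \subset \mathcal{C}(\Omega)$, apply the two preceding steps to each $h \in \mathcal{D}$, and intersect the countable family of exceptional null sets. On the resulting event of full probability the convergence holds along every $h \in \mathcal{D}$; the uniform mass bound $\mu_t^\n(\Omega) \leq 1$ together with a standard $\epsilon/3$ approximation then extends it to arbitrary $h \in \mathcal{C}(\Omega)$. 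This measurability/separability bookkeeping is the only genuinely delicate step, but it is standard, and the substantive content of the proposition is absorbed entirely by Corollary~\ref{cor:LawLargeNums} and Lemma~\ref{lem:HanskiLim}.
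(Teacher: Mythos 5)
Your proposal is correct and follows essentially the same route as the paper, which proves the proposition by precisely this combination of Corollary~\ref{cor:LawLargeNums}(b) (applied node-wise to the vectors $(h(z_i))_i$ after checking $\alpha^\n$, $\psi^\n$ satisfy the hypotheses under the boundedness/equicontinuity assumptions on $c$, $a_t$, and $D$) together with \citep[Theorem 2.2]{Berti:2006aa} and Lemma~\ref{lem:HanskiLim}. Your countable-dense-subset argument is just an unpacking of what the Berti--Pratelli--Rigo citation supplies, so the substance is identical.
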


Proceeding further, it may be shown that the normalised fluctuations in
$\mu_t^\n$ converge to a Gaussian random field, which, to our knowledge,
is an entirely new result. For any $h\in\mathcal{C}(\Omega)$, define
$\sigma_{n,t}^2:\,\mathcal{C}(\Omega)\to[0,\infty)$ by
$\sigma_{n,t}^2[h]:=n^{-1}\sum_{i=1}^n h(z_i)^2 p_{i,t}^\n (1 -
p_{i,t}^\n)$. It is straightforward to check that, for every
$t=0,1,\dots$ and $h \in \mathcal{C}(\Omega)$, $\sigma_{n,t}[h]^2 \to
\int h^2 \mathd \sigma_t$ as $n\to\infty$, where the measures $\sigma_t$ on
$\Omega$ are absolutely continuous with respect to $m$ and are defined
recursively according to $\sigma_0 \equiv 0$ and
\begin{align*}
\frac{\partial\sigma_{t+1}}{\partial m}(z)&=s_t(z)[1-s_t(z)]\frac{\partial\pi_{t}}{\partial m}(z)+c\left[C_{t}\left(z\right)\right]( 1-c[C_{t}(z)]) \left(1-\frac{\partial\pi_{t}}{\partial m}(z)\right) \\
 & \qquad + (s_t(z)-c[C_{t}(z)])^{2}\cdot\frac{\partial\sigma_{t}}{\partial m}(z).
\end{align*}
Additionally, for any $h \in \mathcal{C}(\Omega)$ and $j=1,2,\dots$,
we have
$$
\sum_{i=1}^n h(z_i) \partial_j P_{i,t}^\n(\bv{p}_t^\n) \to
(\mathcal{J}_t h)(z_j)
$$
as $n\to\infty$, where
$\mathcal{J}_t:\,\mathcal{C}(\Omega)\to\mathcal{C}(\Omega)$ is defined
by
\begin{align*}
\mathcal{J}_t h(z) &= (s_t(z)-c[C_t(z)])h(z) \\ & + a_t(z) \int D(z,\tilde{z})h(\tilde{z})c'[C_t(\tilde{z})](m-\pi_t)(\mathd\tilde{z}).
\end{align*}
Thus, Corollary~\ref{cor:CentralLim} implies the following central limit result.
\begin{proposition}
\label{prop:HanskiCLT}
For each $t=1,2,\dots$, and $h \in \mathcal{C}(\Omega)$, 
\begin{equation}
\label{eq:HanskiCLT}
\sqrt{n}\left(\int h \mathd\mu_t^\n - \int h
\mathd\pi_t^\n\right) 
\convd \sum_{s=0}^t \int \mathcal{Q}_{t-s} h \, \mathd\tilde{\xi}_s,
\end{equation}
where $\mathcal{Q}_t :\, \mathcal{C}(\Omega) \to \mathcal{C}(\Omega)$ is
defined by
$\mathcal{Q}_t := \mathcal{J}_t \circ \mathcal{J}_{t-1} \circ \cdots
\circ \mathcal{J}_1$,
and each $\tilde{\xi}_s$ is an independent Gaussian white noise 
on $\Omega$ with intensity measure $\sigma_s$.
\end{proposition}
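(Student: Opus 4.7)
The plan is to cast the left-hand side as an inner product of the form treated by Corollary~\ref{cor:CentralLim} and then identify the resulting Gaussian limit with the stochastic integral representation on the right. First, with $h_n := (h(z_1),\dots,h(z_n))\in\mathbb{R}^n$, one has the finite-dimensional identity
\[
\sqrt{n}\!\left(\int h\,\mathd\mu_t^\n-\int h\,\mathd\pi_t^\n\right) = \langle \zeta_t^\n,h_n\rangle,
\]
and, since $\Omega$ is compact and $h \in \mathcal{C}(\Omega)$, $\sup_n\|h_n\|_{\infty}\leq\|h\|_{\infty}<\infty$, placing the problem squarely within the scope of Corollary~\ref{cor:CentralLim}.

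Next, I would verify the rate conditions by direct differentiation of the local rule (\ref{eq:HanskiCol}). Because $A_{i,s}=n^{-1}a_s(z_i)$ and because $s_s,a_s,D$ and each of $c,c',c'',c'''$ are bounded on the relevant compact domains, every off-diagonal partial satisfies $|\partial_jP_{i,s}^\n|=\bigO(n^{-1})$, $|\partial_j\partial_kP_{i,s}^\n|=\bigO(n^{-2})$, and $|\partial_j\partial_k^2P_{i,s}^\n|=\bigO(n^{-3})$ uniformly in $\bv x$, while diagonal entries receive only the bounded additional contribution from $s_s(z_i)$. Summing as prescribed in the definitions then yields $\sup_n\alpha_s^\n<\infty$, $\beta_s^\n=\bigO(n^{-1/2})$, $\Gamma_s^\n=\bigO(n^{-1})$, and $\sup_n\delta_s^\n<\infty$, as required.

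To obtain the limiting $\sigma_s^2$ and $\mathcal{J}_s$, I would upgrade the argument behind Lemma~\ref{lem:HanskiLim} to the statement that for any $\phi\in\mathcal{C}(\Omega)$ and any continuous $g$ of two real arguments,
\[
\frac1n\sum_{i=1}^n\phi(z_i)\,g\!\left(p_{i,s}^\n,\,C_s^\n(z_i)\right) \longrightarrow \int\phi(z)\,g(\pi_s'(z),C_s(z))\,m(\mathd z),
\]
where equicontinuity of $D$ ensures $C_s^\n(z)\to C_s(z)$ uniformly in $z$. Taking $g(x,y)=x(1-x)$ with weights $h(z_i)^2$ supplies $\sigma_{n,s}^2[h_n]\to\int h^2\,\mathd\sigma_s=:\sigma_s^2[h]$; inserting the explicit form of $\partial_jP_{i,s}^\n(\bv p_s^\n)$ into $\sum_i h(z_i)\,\partial_jP_{i,s}^\n(\bv p_s^\n)$ and separating the $i=j$ contribution from the off-diagonal sum produces, in the limit, exactly $(\mathcal{J}_sh)(z_j)$ for the operator displayed just before the proposition, with $\mathcal{J}_s:\mathcal{C}(\Omega)\to\mathcal{C}(\Omega)$ bounded by equicontinuity of $D$.

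Corollary~\ref{cor:CentralLim} then delivers $\langle\zeta_t^\n,h_n\rangle\convd\mathcal{N}(0,\mathcal{V}_t[h,h])$. The final step is to unfold the recursive definition of $\mathcal{V}_t$ and recognise the resulting sum of squared bilinear forms $\sigma_s^2$ composed with iterates of $\mathcal{J}_r$ as the variance of $\sum_{s=0}^t\int\mathcal{Q}_{t-s}h\,\mathd\tilde\xi_s$; since the $\tilde\xi_s$ are independent Gaussian white noises with intensities $\sigma_s$ (with $\sigma_0\equiv 0$ killing the $s=0$ term), matching variances together with Gaussianity on both sides yields convergence in distribution. The main obstacle is the uniform-in-$j$ identification of the limit $(\mathcal{J}_sh)(z_j)$: the continuity-and-equidistribution argument must be robust enough to control all $n$ coordinates simultaneously so that $\mathcal{J}_sh\in\mathcal{C}(\Omega)$ and Corollary~\ref{cor:CentralLim} applies with a single continuous limiting operator. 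Once this regularity is in hand, the remaining bookkeeping is routine.
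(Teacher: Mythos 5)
Your proposal is correct and follows essentially the same route as the paper: identify $\sqrt{n}(\int h\,\mathd\mu_t^\n-\int h\,\mathd\pi_t^\n)=\langle\zeta_t^\n,h_n\rangle$ with $h_n=(h(z_i))_{i=1}^n$, check the rate conditions of Corollary~\ref{cor:CentralLim} by direct differentiation of~(\ref{eq:HanskiCol}) (the $n^{-1}$ scaling of $A_{j,t}$ gives $\alpha,\delta$ bounded, $\beta=\bigO(n^{-1/2})$, $\Gamma=\bigO(n^{-1})$), identify the limiting $\sigma_s^2$ and $\mathcal{J}_s$ via equidistribution and equicontinuity of $D$, and finally match $\mathcal{V}_t[h,h]$ with the variance of $\sum_{s}\int\mathcal{Q}_{t-s}h\,\mathd\tilde\xi_s$. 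One small remark: the worry you flag about uniform-in-$j$ identification of $(\mathcal{J}_sh)(z_j)$ is not actually an obstacle, since Corollary~\ref{cor:CentralLim} only requires pointwise convergence of $\sum_i h_i\partial_jP_{i,s}^\n(\bv p_s^\n)$ for each fixed $j$; continuity of $\mathcal{J}_sh$ then comes for free from equicontinuity of $D$ and continuity of $s_s,a_s,c'$.
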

For each $t=1,2,\dots$ and $n=1,2,\dots$, let $\zeta_t^\n$ and $\xi_t$
denote the random signed measures defined through their integrals with
respect to functions $h \in \mathcal{C}(\Omega)$ by the left and right-hand
sides of (\ref{eq:HanskiCLT}) respectively. 
Ideally, the convergence (\ref{eq:HanskiCLT}) could be represented in
a concise fashion as in Proposition~\ref{prop:HanskiLLN}. Unfortunately,
the space of signed measures endowed with the weak topology is not
metrisable \citep{Bansaye:2011aa}, prohibiting conventional convergence theorems
in this setting. Instead, it is common to embed a signed random measure
into the dual of a Sobolev space. For $1 \leq r < \infty$, let $W^r(\Omega)$
denote the Sobolev space of order $r$ on $\Omega$, defined as the closure
of $L^2(\Omega) \cap \mathcal{C}^{\infty}(\Omega)$ under the norm
\[
\|u\|_{W^r(\Omega)}^2 := \sum_{|\alpha| \leq r} \int_\Omega |\partial^\alpha u(x)|^2 \mathd x,
\]
where the sum is taken over all multi-indices $\alpha=(\alpha_1,\dots,\alpha_d)$
with $|\alpha|=\sum_i \alpha_i \leq r$, and the derivatives are understood
in the weak sense. Assuming $\partial\Omega$ is locally Lipschitz, by the Sobolev embedding theorem \citep[Theorem 4.12]{Adams:2003aa}, $W^r(\Omega) \subset
\mathcal{C}(\Omega)$ for all $r > d/2$, and there is a constant $C > 0$
depending only on $\Omega$ and $r$ such that
\begin{equation}
\label{eq:SobolevEmbed}
\|u\|_{\infty} \leq C \|u\|_{W^r(\Omega)},\qquad u \in W^r(\Omega),
\end{equation}
with some mild abuse of notation. 
Let $W^{-r}(\Omega)$ denote the dual space of $W^r(\Omega)$,
observing that any signed random measure $\zeta$ may be identified as an
element in $W^{-r}(\Omega)$ by $(h,\zeta):= \int_\Omega h \mathd \zeta$ for
$h \in W^r(\Omega)$. 
Of course, if $\zeta_n \convd \zeta$ where $\zeta,\zeta_1,\zeta_2,\dots \in
W^{-r}(\Omega)$, then $(h,\zeta_n) \convd (h,\zeta)$ for any $h \in W^r(\Omega)$.
By virtue of Theorem~\ref{thm:LqrErrorBounds} and~(\ref{eq:SobolevEmbed}), for any $h \in W^r(\Omega)$ and each $t=1,2,\dots$,
\begin{equation}
\label{eq:HanskiCompact}
\sup_n \mathbb{E}|(h,\zeta_t^\n)| \leq C_t \|h\|_{W^r(\Omega)}\quad \mbox{and}\quad
\sup_n \mathbb{E}\|\zeta_t^\n\|_{W^{-r}(\Omega)} < \infty.
\end{equation}
Since the embedding $W^{r+1}(\Omega) \hookrightarrow W^r(\Omega)$ is Hilbert-Schmidt, any closed ball in $W^{-r}(\Omega)$ is compact in $W^{-r-1}(\Omega)$. Therefore, by Markov's inequality, (\ref{eq:HanskiCompact}) implies that $\{\zeta_t^\n\}_{n=1}^{\infty}$ is tight in
$W^{-r-1}(\Omega)$, whence Proposition~\ref{prop:HanskiCLT} implies a concise
Corollary~\ref{cor:HanskiCLT2}.
\begin{corollary}
\label{cor:HanskiCLT2}
For each $t=1,2,\dots,$ the signed random measures $\zeta_t^\n \convd
\xi_t$ in $W^{-r}(\Omega)$ for any $r > d/2 + 1$. 
\end{corollary}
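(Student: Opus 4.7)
The plan is a standard tightness-plus-identification argument in the separable Hilbert space $W^{-r}(\Omega)$: first I would establish tightness of $\{\zeta_t^\n\}_{n\geq 1}$ in $W^{-r}(\Omega)$, then identify every subsequential weak limit with $\xi_t$ via Proposition~\ref{prop:HanskiCLT} together with the Cram\'er--Wold device.

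For tightness, since $r > d/2 + 1$ the index $r-1$ still satisfies $r - 1 > d/2$, so the moment bound (\ref{eq:HanskiCompact}) applied at level $r-1$ gives $M := \sup_n \mathbb{E}\|\zeta_t^\n\|_{W^{-(r-1)}(\Omega)} < \infty$. The excerpt already notes that $W^{r}(\Omega) \hookrightarrow W^{r-1}(\Omega)$ is Hilbert--Schmidt, hence compact, and by duality so is $W^{-(r-1)}(\Omega) \hookrightarrow W^{-r}(\Omega)$; in particular any closed ball of $W^{-(r-1)}(\Omega)$ is relatively compact in $W^{-r}(\Omega)$. Markov's inequality applied to the above bound then delivers tightness of $\{\zeta_t^\n\}$ in $W^{-r}(\Omega)$.

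By Prokhorov's theorem, every subsequence admits a further weakly convergent subsequence, say $\zeta_t^{(n_k)} \convd \zeta_t^{\ast}$ in $W^{-r}(\Omega)$. For any $h_1,\dots,h_m \in W^r(\Omega) \subset \mathcal{C}(\Omega)$, the evaluation $\zeta \mapsto ((h_1,\zeta),\dots,(h_m,\zeta))$ is a continuous linear map on $W^{-r}(\Omega)$, so the continuous mapping theorem reduces the identification of $\zeta_t^{\ast}$ to showing joint convergence of these pairings. Proposition~\ref{prop:HanskiCLT} supplies the one-dimensional convergence for each $h \in \mathcal{C}(\Omega)$, and applying it to arbitrary linear combinations $\sum_i c_i h_i \in \mathcal{C}(\Omega)$ in conjunction with Cram\'er--Wold upgrades this to joint convergence to $((h_1,\xi_t),\dots,(h_m,\xi_t))$. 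Since $W^r(\Omega)$ is separable and separates points in $W^{-r}(\Omega)$, these finite-dimensional distributions determine the law of any $W^{-r}(\Omega)$-valued random element, so $\zeta_t^{\ast} \eqdist \xi_t$; uniqueness of subsequential limits then promotes tightness to the claimed weak convergence.

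The main technical subtlety is ensuring that $\xi_t$, which is defined only through its action on continuous functions via the integrals in (\ref{eq:HanskiCLT}), genuinely defines a $W^{-r}(\Omega)$-valued random element so that the identification $\zeta_t^{\ast} \eqdist \xi_t$ is meaningful. This can be handled by transferring the uniform moment bound (\ref{eq:HanskiCompact}) to the limit via Fatou's lemma along the weakly convergent subsequence, or directly by estimating $\|\xi_t\|_{W^{-r}}$ from the Sobolev embedding (\ref{eq:SobolevEmbed}) applied to each Gaussian white-noise integral $\int \mathcal{Q}_{t-s} h\, \mathrm{d}\tilde{\xi}_s$ appearing in the representation of $\xi_t$.
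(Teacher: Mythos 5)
Your proof is correct and takes the same route as the paper's compressed argument: the moment bound~(\ref{eq:HanskiCompact}) plus the (Hilbert--Schmidt, hence compact) Sobolev embedding yield tightness via Markov's inequality, and Proposition~\ref{prop:HanskiCLT} then identifies the limit. The paper works at levels $r$ and $r+1$ and relabels, while you work at $r-1$ and $r$; beyond this cosmetic index shift you simply make explicit the Prokhorov/Cram\'er--Wold identification step and the well-posedness of $\xi_t$ as a $W^{-r}(\Omega)$-valued element, both of which the paper's ``whence'' leaves implicit.
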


\end{example}
\begin{example}[Dynamic Random Graphs]
\label{ex:RandomGraph}
The prototypical representation of a complex network is that of a
\emph{random graph} of large size. In a stochastic setting, one can
construct very general processes on a space of graphs to model the
evolution of large networks \citep{Durrett:2007}, but such processes are often difficult to
study. It is convenient then that many dynamic random graphs can be
formulated as occupancy processes, where now the nodes become the
vertices of a line graph, describing the presence of an edge.
The natural setting for analysing large dense random graphs is by
way of \emph{graphons}, defined as symmetric Borel measurable functions
$W:\,[0,1]^2 \to [0,1]$. Any graph $G=(V(G),E(G))$ with $V(G) = \{1,2,\dots,m\}$
may be embedded in a graphon $W_G$ by subdividing $[0,1]$ into
intervals $I_1,\dots,I_m$ and taking $W_G(x,y) = \ind_{ij \in E}$ for
all $(x,y) \in I_i \times I_j$, $i,j=1,\dots,m$. In the sequel, 
the standard graphon of $G$ is to be regarded as the case where each
subinterval has equal length.
Many properties of
a graph may be formulated in terms of its graphon, including the degree
function, defined, for vertex $i$ with $x_i \in I_i$, by
\[
d_{W_G}(x_i) = \int_0^1 W_G(x_i,y) \mathd y = \frac{\deg i}n.
\]
But perhaps the most important object in the study of large dense random
graphs is the \emph{homomorphism density}: if $F$ is a simple graph
and $W$ a graphon, we define the homomorphism density $t(F,W)$ of $F$
into $W$ by
\[
t(F,W) = \int_{[0,1]^{V(F)}} \prod_{ij\in E(F)} W(x_i,x_j) \prod_{i \in V(F)}
\mathd x_i.
\]
Naturally, graphons are strict generalisations
of graphs, which becomes important for developing limit theorems.
In this connection, the homomorphism density provides a good starting point;
if a sequence of graphons $W^\n$ converges to $W$ in a reasonable sense,
we might expect that $\lim_{n\to\infty} t(F,W^\n) = t(F,W)$ for any
simple graph $F$. 
It turns out that to show convergence of the underlying graphs in homomorphism density,
it is sufficient to show convergence under the cut metric
\citep[Lemma 10.23]{Lovasz:2012aa}, induced by the norm
$\|\cdot\|_{\square}$, defined for \emph{kernels} $W : [0,1]^2 \to \mathbb{R}$ by
\[
\|W\|_{\square} = \sup_{U,V\subseteq[0,1]}\left|\int_{U\times V} W(x,y)
\, \mathd x \mathd y\right|.
\]
For more details on graphons, refer to the comprehensive monograph of 
Lov\`{a}sz~\citep{Lovasz:2012aa}.

To illustrate, consider the following sequence of
Markov preferential attachment models. For each $n=1,2,\dots$, let
$G^\n$ be a simple graph with $n$ edges and $v_n$ vertices, and suppose
that $G^\n \subset G^{(n+1)}$. Now, let
$G_t^\n$, $t=0,1,\dots$, be a Markov chain on the space of subgraphs
of $G^\n$ whose edges evolve independently in such a way that each
edge $ij \in E(G^\n)$ is deleted with probability $q_t$ and is
added with probability
\[
f\left(\frac{1}{2v_n}\deg i + \frac{1}{2v_n} \deg j\right),
\]
for some function $f:\,[0,1]\to[0,1] \in \mathcal{C}^2$. Since
$\alpha^\n \leq \|f'\|_{\infty}$ and $\psi^\n \leq v_n^{-1}\|f'\|_{\infty}
+ v_n^{-2}\|f'\|_{\infty}$, it can be verified that this sequence of
occupancy processes satisfies the conditions of 
Corollaries~\ref{cor:LawLargeNums} and~\ref{cor:CentralLim}. 
We begin by showing almost sure convergence of the underlying graphs under the
cut metric, which is easily achieved with the help of Corollary~\ref{cor:LawLargeNums}.

\begin{proposition}
\label{prop:GraphonConv}
Let $W^\n$ and $W_t^\n$ denote the graphons of $G^\n$ and $G_t^\n$ (respectively)
and suppose that $\|W^\n-W\|_{\square}\to 0$ and
$\|W_0^\n - W_0\|_{\square}\to~0$ as $n\to\infty$ for some graphons $W$ and $W_0$. 
Define the sequence of graphons $W_t$ by the recursion
\begin{equation}
\label{eq:GraphonRecur}
\small{W_{t+1}(x,y) = q_t W_t(x,y) + W(x,y)[1-W_t(x,y)] f\left(\frac{d_{W_t}(x)
+d_{W_t}(y)}{2}\right).}
\end{equation}
Then, for each $t=1,2,\dots$, $\|W_t^\n - W_t\|_{\square} \convas 0$. 
\end{proposition}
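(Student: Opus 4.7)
Proceed by induction on $t$, with the base case $\|W_0^\n - W_0\|_\square \to 0$ given as a hypothesis. For the inductive step, let $\mathcal{F}_t^\n = \sigma(\bv{X}_s^\n : s \leq t)$. The per-edge transition rule identifies $\mathbb{E}[W_{t+1}^\n \mid \mathcal{F}_t^\n]$ as the stepwise graphon with cell value $P_{ij,t}(\bv{X}_t^\n)$ on $I_i \times I_j$, which coincides with $\Phi_{W^\n}(W_t^\n)$, where $\Phi_V(U) := q_t U + V(1-U)\,f\!\bigl(\tfrac{d_U(\cdot) + d_U(\cdot)}{2}\bigr)$ is the recursion operator in~(\ref{eq:GraphonRecur}), and $W_{t+1} = \Phi_W(W_t)$. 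Decompose
\[
W_{t+1}^\n - W_{t+1} = \bigl(W_{t+1}^\n - \Phi_{W^\n}(W_t^\n)\bigr) + \bigl(\Phi_{W^\n}(W_t^\n) - \Phi_W(W_t)\bigr),
\]
separating stochastic fluctuation from deterministic continuity.

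For the fluctuation term, stepwise graphons attain their cut norm on unions of grid cells, so
\[
\|W_{t+1}^\n - \Phi_{W^\n}(W_t^\n)\|_\square = \frac{1}{v_n^2}\sup_{S,\,T \subseteq [v_n]}\biggl|\sum_{ij \in E(G^\n)} \ind_{\{i\in S,\, j\in T\}}\bigl(X_{ij,t+1}^\n - P_{ij,t}(\bv{X}_t^\n)\bigr)\biggr|.
\]
Apply Corollary~\ref{cor:LawLargeNums}\ref{enu:LLNConvAS} to the family $\mathcal{H}_n = \{h^{S,T}\}_{S,T \subseteq [v_n]}$ of $\{0,1\}$-vectors indexed by $E(G^\n)$ with $h^{S,T}_{ij} = \ind_{i \in S}\ind_{j \in T}$. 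Since $|\mathcal{H}_n| \leq 4^{v_n}$ and $\|h\|_\infty \leq 1$, (\ref{eq:RadBounds}) gives $\Rad(\mathcal{H}_n) = \bigO(\sqrt{v_n/n})$, which tends to $0$ in the dense regime $n \asymp v_n^2$ implicit in nontrivial graphon convergence. The derivative-norm bounds $\alpha^\n \leq \|f'\|_\infty$ and $\psi^\n = \bigO(v_n^{-1})$ were noted in the paragraph preceding the proposition, so the corollary applies; rescaling by $n/v_n^2 = \Theta(1)$ yields $\|W_{t+1}^\n - \Phi_{W^\n}(W_t^\n)\|_\square \convas 0$.

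For the deterministic term, split
\[
\|\Phi_{W^\n}(W_t^\n) - \Phi_W(W_t)\|_\square \leq \|\Phi_{W^\n}(W_t^\n) - \Phi_{W^\n}(W_t)\|_\square + \|\Phi_{W^\n}(W_t) - \Phi_W(W_t)\|_\square,
\]
and attend to each summand by cut-norm continuity of $\Phi_V$ in its two arguments. The inductive hypothesis $\|W_t^\n - W_t\|_\square \convas 0$ controls the first; $\|W^\n - W\|_\square \to 0$ controls the second (and is easier, as $\Phi_V$ is linear in $V$). Continuity in $U$ rests on three ingredients: (i) the contraction $\|d_U - d_{U'}\|_{L^1} \leq 2\|U - U'\|_\square$, pushing cut-norm convergence of $U$ into $L^1$ convergence of $d_U$; (ii) Lipschitzness of $f$ on $[0,1]$, handling the composition $f(\cdots)$; and (iii) the Counting Lemma combined with Weierstrass approximation, handling the bilinear product $V(1-U)f(\cdots)$ at the level of subgraph-density-type integrals.

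The main obstacle is step~(iii): the cut norm does not factor over pointwise products of graphons, so cut-norm continuity of the nonlinear composition $V \cdot (1-U) \cdot f(\cdots)$ requires a detour through polynomial approximation of $f$, reducing the question to cut-norm continuity of monomial integrals in $(V, U)$, which is delivered by the Counting Lemma for subgraph homomorphism densities.
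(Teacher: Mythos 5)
Your proposal takes a genuinely different route from the paper's. The paper introduces the \emph{deterministic} graphon sequence $\tilde{W}_t^\n$ obtained by iterating (\ref{eq:GraphonRecur}) from $\tilde{W}_0^\n = W_0^\n$ with $W^\n$ in place of $W$; this is exactly the graphon of $\bv{p}_t^\n$. The cut norm $\|W_t^\n-\tilde{W}_t^\n\|_\square$ is then expressed, via the rank-one indicator family $\mathcal{A}_n=\{uv^\top: u,v\in\{0,1\}^{v_n}\}$, as a supremum of the form in Corollary~\ref{cor:LawLargeNums}, so that corollary applies \emph{directly and once} to the full $t$-step deviation from $\bv{p}_t^\n$, while the remaining gap $\|\tilde{W}_t^\n-W_t\|_\square\to 0$ is purely deterministic. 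Your decomposition instead isolates a one-step martingale increment $W_{t+1}^\n - \mathbb{E}[W_{t+1}^\n\mid\mathcal{F}_t^\n]$ at each step, pushing the earlier fluctuations through the continuity of $\Phi$ by induction. This works in principle, but as stated it misuses Corollary~\ref{cor:LawLargeNums}: that corollary compares $X_{ij,t+1}^\n$ to $p_{ij,t+1}^\n = P_{ij,t}(\bv{p}_t^\n)$, whereas you want to compare it to $P_{ij,t}(\bv{X}_t^\n)$. To invoke it for the one-step increment you must condition on $\bv{X}_t^\n$ and apply it over a single time step from that (random) initial state; this is legitimate because the concentration bound underlying the corollary (Corollary~\ref{cor:Concentration}, (\ref{eq:McDiarmidAppl})) depends only on the global rule and not the initial state, but you should make the conditioning explicit and note how a.s.\ convergence is recovered unconditionally.

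On the deterministic continuity, you are right to flag that cut-norm convergence does not in general survive pointwise products of graphons, and that bridging this requires a Counting-Lemma-style argument aided by Weierstrass approximation of $f$ — the paper's ``straightforward exercise'' is considerably less straightforward than that phrasing suggests, and this scrutiny is a genuine merit of your write-up. One structural simplification worth noticing (which eases the product issue in the paper's version): since $G_t^\n\subset G^\n$ for all $t$, the graphon $\tilde{W}_t^\n$ is supported on $\{W^\n=1\}$, whence $W^\n \tilde{W}_t^\n=\tilde{W}_t^\n$ and so $W^\n(1-\tilde{W}_t^\n)=W^\n-\tilde{W}_t^\n$; the remaining nonlinearity is confined to the degree term $f\bigl(\tfrac12(d_{\tilde{W}_t^\n}(x)+d_{\tilde{W}_t^\n}(y))\bigr)$, where your ingredient~(i) ($\|d_U-d_{U'}\|_{L^1}\leq 2\|U-U'\|_\square$) plus Lipschitzness of $f$ and then separability of polynomial powers of the degree function (as you suggest) closes the argument.

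Your Rademacher-complexity calculation and the observation that the normalization $n^{-1}$ matches $v_n^{-2}$ in the dense regime $n\asymp v_n^2$ agree with what the paper uses implicitly, and are correctly applied.
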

\begin{proof}
For each $t=0,1,\dots$ define the graphon $\tilde{W}_t^\n$ by 
$\tilde{W}_0^\n = W_0^\n$ and satisfying the same recursion 
relation~(\ref{eq:GraphonRecur}). It is a straightforward
exercise 
to show that $\|\tilde{W}_t^\n - W_t\|_{\square} \to 0$ as $n\to\infty$.
Observe that for $\mathcal{A}_n = \{uv^\top\,:\, u,v \in \{0,1\}^{v_n}\}$ the
set of $v_n\times v_n$ binary rank-one matrices,
\[
\|W_t^\n - \tilde{W}_t^\n\|_{\square} = \max_{A \in \mathcal{A}_n}
\left|\frac1{v_n^2} \sum_{i,j=1}^{v_n} A_{ij} [W_t^\n(x_i,x_j) - \tilde{W}_t^\n(x_i,x_j)]\right|.
\]
Since $\log_2|\mathcal{A}_n|\leq 2 v_n$, from (\ref{eq:RadBounds}), $\Rad(\mathcal{A}_n) = \bigO(v_n^{-1/2})$ as $n \to \infty$, and so the result follows from Corollary~\ref{cor:LawLargeNums}.
\end{proof}

Proceeding further, a combination of Proposition~\ref{prop:LinEstimate} and Corollary~\ref{cor:CentralLim} show that arbitrary homomorphism
densities $t(F,W_t^\n)$ of $G_t^\n$ are approximately normally distributed about
$t(F,W_t)$. To illustrate, consider the density of triangles in $G_t^\n$ given
by $t(\triangle,W_t^\n)$ where $\triangle$ is the complete graph on three vertices.
\begin{proposition}
\label{prop:HomoCLT}
Assume that $|t(\triangle,W_0^\n)-t(\triangle,W_0)| = o(v_n^{-1}n^{1/2})$ as $n \to \infty$. 
Then, for each $t \geq 1$, as $n \to \infty$,
\[
v_n n^{-1/2} [t(\triangle,W_t^\n) - t(\triangle,W_t)] \convd \mathcal{N}(0,\mathcal{V}_t[\Lambda_t]),
\]
where $\Lambda_t:[0,1]^2\to[0,3]$ is a kernel defined by
\[
\Lambda_t(x,y) = 3 \int_0^1 W_t(x,z) W_t(z,y) \mathd z.
\]
Here $\mathcal{V}_t$ is a functional acting on kernels $U:[0,1]^2\to\mathbb{R}$ satisfying $\mathcal{V}_0 \equiv 0$ and, for $t=0,1,\dots$, the recursion relation $\mathcal{V}_{t+1} U=\sigma_{t+1}^2 U+\mathcal{V}_t \circ \mathcal{J}_t U$, with
\begin{multline*}
\mathcal{J}_t U(x,y) := W(x,y)\left\lbrace U(x,y)(q_t - f[\tfrac12 (d_U(x)
+d_U(y))])\vphantom{\int_0^1}\right. \\
+ \int_0^1 W(x,z)[1-U(x,z)]f'[\tfrac12(d_U(x)+d_U(z))] \\
\left. \vphantom{\int_0^1}+ W(y,z)[1-U(y,z)]f'[\tfrac12(d_U(y)+d_U(z))]\, \mathd z\right\rbrace,
\end{multline*}
while $\sigma_t^2$ is a functional acting on kernels satisfying $\sigma_0^2 \equiv 0$ and for each $t=0,1,\dots$,
\begin{multline*}
\sigma_{t+1}^2[U(x,y)] = q_t(1-q_t)\int_{[0,1]^2} U(x,y) W_t(x,y) \mathd x \mathd y \\
+ \int_{[0,1]^2} U(x,y) w_t(x,y)[1-w_t(x,y)][1-W_t(x,y)] \mathd x \mathd y \\
+ \sigma_t^2[(q_t - w_t(x,y))U(x,y)],
\end{multline*}
with $w_t(x,y) = f[\tfrac12(d_{W_t}(x)+d_{W_t}(y))]$. 
\end{proposition}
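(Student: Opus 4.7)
My plan is to express $t(\triangle,W_t^\n)$ as a smooth (cubic and multilinear) function of the edge occupancy vector, to linearise it via Proposition~\ref{prop:LinEstimate}, and to apply Corollary~\ref{cor:CentralLim} to the resulting linear functional, following the template of Proposition~\ref{prop:HanskiCLT}.

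First I would write
\[
t(\triangle,W_t^\n) = f_n(\bv X_t^\n),\qquad f_n(\bv x):=\frac{1}{v_n^3}\sum_{i,j,k=1}^{v_n} x_{ij}\,x_{jk}\,x_{ki},
\]
with the convention $x_e=0$ when $e\notin E(G^\n)$. Multilinearity gives $\partial_e^2 f_n\equiv 0$, so the $\partial_j\partial_k^2 f$ term in Proposition~\ref{prop:LinEstimate} vanishes. A direct computation yields
\[
\partial_{(a,b)} f_n(\bv p)=\frac{6}{v_n^{3}}\sum_k p_{(a,k)}p_{(b,k)},\qquad \max_{e,e'}\|\partial_e\partial_{e'}f_n\|_\infty = O(v_n^{-3}).
\]
Applying Proposition~\ref{prop:LinEstimate} to $\tilde f := v_n n^{-1/2} f_n$, and using $\alpha^\n = O(1)$, $\psi^\n = O(v_n^{-1})$ together with $n\le\binom{v_n}{2}$ (so that $n v_n^{-2} = O(1)$), the resulting error bound is
\[
\mathbb{E}\bigl|v_n n^{-1/2}[f_n(\bv X_t^\n)-f_n(\bv p_t^\n)]-\langle\zeta_t^\n,h^\n\rangle\bigr| = O\!\left(v_n^{-2}\sqrt{n\log n}\right) = o(1),
\]
where $h_e^\n := v_n\,\partial_e f_n(\bv p_t^\n)$.

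Second, I would control the purely deterministic error $v_n n^{-1/2}[f_n(\bv p_t^\n)-t(\triangle,W_t)]$ by induction on $t$. The base case is the standing hypothesis $|t(\triangle,W_0^\n)-t(\triangle,W_0)|=o(v_n^{-1}n^{1/2})$. For the inductive step, the deterministic system $\bv p_t^\n$ satisfies the discrete analogue of (\ref{eq:GraphonRecur}), so a Lipschitz argument on homomorphism densities combined with the uniform first-derivative bounds from Step~1 propagates the rate from time $t$ to $t+1$.

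Third, I would invoke Corollary~\ref{cor:CentralLim} applied to $h^\n$. Under the identification of each edge $(a,b)\in E(G^\n)$ with vertex coordinates $(x_a,x_b)\in[0,1]^2$, the vector $h^\n$ is a Riemann approximant of the kernel $\Lambda_t(x_a,x_b)$ restricted to the support of $W^\n$, so the continuous-index extension used in the proof of Proposition~\ref{prop:HanskiCLT} applies. Two limits must then be identified.
\begin{enumerate}[label=(\alph*)]
\item The one-step variance $\sigma_{n,t}^2[h^\n] = n^{-1}\sum_e (h_e^\n)^2 p_{e,t}^\n(1-p_{e,t}^\n)$ splits, via the decomposition $P_{(a,b),t}^\n = (1-q_t)x_{(a,b)}+(1-x_{(a,b)})f(\cdots)$, into the three integrals appearing in the displayed formula for $\sigma_{t+1}^2[\Lambda_t]$ after passing to the Riemann limit.
\item Differentiating $P_{(a,b),t}^\n(\bv x)$ with respect to $x_{(c,d)}$ produces one contribution from $(c,d)=(a,b)$ (the survival-minus-colonisation term $q_t - f(\cdots)$) and one from each incidence of $(c,d)$ with a vertex of $(a,b)$ (via $f'$). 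Summing $h^\n_{(a,b)}\partial_{(c,d)} P_{(a,b),t}^\n(\bv p_t^\n)$ over $(a,b)$ against the Riemann-sum kernel yields, in the limit, precisely the integral formula displayed for $\mathcal{J}_t \Lambda_t$ evaluated at $(x_c,x_d)$.
\end{enumerate}
Unravelling the iterated recursions in Corollary~\ref{cor:CentralLim}, together with the contraction identity (\ref{eq:Contraction}), reproduces $\mathcal{V}_t[\Lambda_t]$, yielding $\langle\zeta_t^\n,h^\n\rangle\convd \mathcal{N}(0,\mathcal{V}_t[\Lambda_t])$. Combining with Steps~1 and~2 proves the proposition.

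The main obstacle is identification (b): the bookkeeping of $\partial_{(c,d)} P_{(a,b)}^\n$ under the preferential-attachment rule, and reducing the resulting double edge-sum against $h^\n$ to the integral form of $\mathcal{J}_t$ through Riemann-sum convergence and equicontinuity (which requires $f\in\mathcal{C}^2$, supplied by hypothesis). A secondary technical point is making rigorous the continuous-index extension of Corollary~\ref{cor:CentralLim} to the $n$-dependent sequence $h^\n$, but this is handled precisely as in Proposition~\ref{prop:HanskiCLT} by recognising each $h^\n$ as a sampling of a fixed continuous kernel on the vertex coordinates.
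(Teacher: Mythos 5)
Your proposal follows the paper's proof essentially verbatim: express the triangle density as a cubic multilinear function of the edge-occupancy vector, linearise via Proposition~\ref{prop:LinEstimate}, control the purely deterministic error $t(\triangle,\tilde W_t^\n)-t(\triangle,W_t)$ by induction on $t$, and then apply the extended central limit result (\ref{eq:CentralLimProp}) with the Riemann-sum identification of $\sigma_t^2$ and $\mathcal{J}_t$ exactly as in Example~\ref{ex:Hanski}. One small caution: with $h_e^\n := v_n\,\partial_e f_n(\bv p_t^\n)$ one has $\|h^\n\|_\infty = O(v_n^{-1})$ and hence $\sigma_{n,t}^2[h^\n]\to 0$, giving a degenerate limit; the nondegenerate limit requires $h_e^\n := v_n^2\,\partial_e f_n(\bv p_t^\n)$, which matches the quantity $v_n^2 n^{-1/2}(\Lambda_t^\n,\zeta_t^\n)$ appearing in the paper's own argument, so the normalisation in the statement should be read as $v_n^2 n^{-1/2}$.
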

\begin{proof}
Defining the triangle density acting on adjacency matrices $A \in
\{0,1\}^{v_n\times v_n}$, and computing derivatives, we find that
\[
t(\triangle,A) = \frac{1}{v_n^3}\sum_{i,j,k=1}^{v_n} A_{ij}A_{jk}A_{ik},\quad
\partial_{ij}t(\triangle,A) = \frac{3}{v_n^3} \sum_{k=1}^{v_n} A_{ik}A_{jk}.
\]
Constructing $\tilde{W}_t^\n$ once again as in the proof of Proposition~\ref{prop:GraphonConv}, Proposition~\ref{prop:LinEstimate} implies that
\[
\mathbb{E}|t(\triangle,W_t^\n) - t(\triangle,\tilde{W}_t^\n)
-(\Lambda_t^\n,\zeta_t^\n)| = \bigO(n v_n^{-3})\qquad\mbox{as }n\to\infty,
\]
where $\Lambda_{ij}^\n = 3n^{1/2}v_n^{-3}\sum_{k=1}^{v_n} W_t^\n(x_i,x_k)W_t^\n(x_j,x_k)$. 
By induction on the assumed case $t = 0$, it may be shown that
$|t(\triangle,\tilde{W}_t^\n)-t(\triangle,W_t)|=o(v_n^{-1} n^{1/2})$ for each
$t=1,2,\dots$. 
Therefore, it remains only to show convergence in law of $v_n^2 n^{-1/2} (\Lambda_t^\n,\zeta_t^\n)$. Taking $n\to\infty$ and relabelling as necessary, $v_n^2 n^{-1/2} \Lambda_{ij,t}^\n \to \Lambda_t(x_i,x_j)$.
The rest is implied by (\ref{eq:CentralLimProp}), following similar computations to those in Example~\ref{ex:Hanski}. 
\end{proof}
To adapt Proposition~\ref{prop:HomoCLT} to homomorphism densities from
a different simple graph $F$, one need only modify the kernel $\Lambda_t$ ---all 
other objects remain intact. 
Allowing one final remark, it is also quite possible to consider another occupancy process running on the nodes of a dynamic random graph model as one conglomerate occupancy
process. While notation becomes rather unwieldy at this level of complexity, provided that presence/absence of edges and the states of the vertices are
not too intimately connected as to violate the assumptions of Corollary~\ref{cor:CentralLim}, many of the ideas contained in Examples~\ref{ex:Spreading} and~\ref{ex:RandomGraph} should extend to the more
general setting. 
\end{example}

\section*{Acknowledgements}
L. Hodgkinson thanks Andrew Barbour for numerous valuable conversations
concerning Stein's method, with particular appreciation for 
his insight into the one-step mechanism which makes
Theorem \ref{thm:WasserBound} possible. He is also grateful to Aihua Xia and Nathan Ross for their friendly advice on approaching problems using Stein's method.

\appendix

\section{The method of bounded differences}
\label{sec:MomentIneq}
The classical \emph{method of bounded differences} provides the simplest and most
versatile approach for developing moment estimates and concentration inequalities
involving bounded random variables. We recall a few results
from the theory which will be greatly useful to us. 
For any function $f:\{0,1\}^n \to \mathbb{R}$, let $\Delta_i f(\bv{x}) = 
|f(\bv{x})-f(\bv{x}^i)|$, where $\bv{x}^i$ denotes $\bv{x}$ with the $i$-th 
component replaced by its inverse $x_i^i = 1 - x_i$. 
Let $\Psi(x) = e^{x^2} - 1$ 
and $\|\cdot\|_{\Psi}$ be the corresponding Orlicz norm, defined for a random 
variable $X$ by
\[
\|X\|_{\Psi} = \inf\{t > 0\,:\,\mathbb{E}\Psi(X/t) \leq 1\}.
\]
The Orlicz norm is particularly useful for controlling maxima: if $X_1,\dots,X_n$ are random variables, not
necessarily independent, then \citep{Pollard:1989aa}
\begin{equation}
\label{eq:MaxOrlicz}
\mathbb{E}\max_i X_i^2 \leq \log 2 n \cdot \max_i \|X_i\|_{\Psi}^2.
\end{equation}
\begin{theorem}[\textsc{Method of Bounded Differences}]
\label{thm:BernFuncMoments}
Let $W_1,\dots,W_n$ be independent $\{0,1\}$-valued random variables and
$\bv{W} = (W_1,\dots,W_n)$. For any 
$f:\,\{0,1\}^n \to \mathbb{R}$, let $\|\Delta f\|_2^2 = 
\sum_{i=1}^n \|\Delta_i f\|_{\infty}^2$. Then, for any $\epsilon > 0$, 
and $q \geq 1$,
\begin{align}
\label{eq:McDiarmid}
\mathbb{P}(f(\bv{W}) > \epsilon + \mathbb{E}f(\bv{W}))
&\leq \exp\left(-\frac{2 \epsilon^2}{\|\Delta f\|_2^2}\right) \\
\label{eq:BernMoment}
\|f(\bv{W})-\mathbb{E}f(\bv{W})\|_{q\,\,} &\leq \sqrt{\tfrac{\pi q}{2}} \|\Delta f\|_2 \\
\label{eq:BernOrlicz}
\|f(\bv{W})-\mathbb{E}f(\bv{W})\|_{\Psi} & \leq \sqrt{\tfrac{3}{2}} 
\|\Delta f\|_2.
\end{align}
\end{theorem}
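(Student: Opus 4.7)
The three bounds all flow from a single martingale estimate, so my plan is to prove the MGF bound first and then extract each inequality from it.

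First, I would set up Doob's martingale: let $\mathcal{F}_0 = \{\emptyset,\Omega\}$, $\mathcal{F}_i = \sigma(W_1,\dots,W_i)$, $M_i = \mathbb{E}[f(\bv{W})\mid\mathcal{F}_i]$, and $D_i = M_i - M_{i-1}$, so that $f(\bv{W}) - \mathbb{E}f(\bv{W}) = \sum_{i=1}^n D_i$. The bounded-differences hypothesis gives $\mathrm{ess\,sup}\,D_i - \mathrm{ess\,inf}\,D_i \leq \|\Delta_i f\|_\infty$ conditional on $\mathcal{F}_{i-1}$, because after conditioning, $M_{i}$ is a function only of $W_i$ whose two possible values differ by at most $\|\Delta_i f\|_\infty$. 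Hoeffding's lemma then yields $\mathbb{E}[e^{\lambda D_i}\mid \mathcal{F}_{i-1}] \leq \exp(\lambda^2\|\Delta_i f\|_\infty^2/8)$, and iterating the tower property produces the sub-Gaussian MGF bound
\[
\mathbb{E}\exp\{\lambda[f(\bv{W})-\mathbb{E}f(\bv{W})]\} \leq \exp(\lambda^2 \|\Delta f\|_2^2/8), \qquad \lambda \in \mathbb{R}.
\]

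Bound (\ref{eq:McDiarmid}) then falls out immediately by the Chernoff method and optimizing $\lambda = 4\epsilon/\|\Delta f\|_2^2$. For the moment bound (\ref{eq:BernMoment}), I would apply the symmetrised tail $\mathbb{P}(|f(\bv W) - \mathbb{E}f(\bv W)| > \epsilon) \leq 2\exp(-2\epsilon^2/\|\Delta f\|_2^2)$ to the layer-cake formula
\[
\mathbb{E}|f(\bv W)-\mathbb{E}f(\bv W)|^q = q \int_0^\infty \epsilon^{q-1}\mathbb{P}(|f(\bv W)-\mathbb{E}f(\bv W)|>\epsilon)\,\mathd \epsilon,
\]
and substitute $u = 2\epsilon^2/\|\Delta f\|_2^2$. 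The integral reduces to a Gamma function, giving $\|f(\bv W)-\mathbb{E}f(\bv W)\|_q^q \leq q\, 2^{-q/2} \Gamma(q/2)\, \|\Delta f\|_2^q$. The remaining step is purely analytic: I need to verify $q\Gamma(q/2) \leq (\pi q/2)^{q/2}\cdot 2^{q/2}$, i.e.\ $\Gamma(q/2) \leq q^{q/2-1}\pi^{q/2}$, which holds with equality at $q=1$ (where $\Gamma(1/2)=\sqrt{\pi}$) and is easily checked for $q\geq 1$ using log-convexity of $\Gamma$ together with the duplication formula or a Stirling-type estimate.

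Finally, for the Orlicz bound (\ref{eq:BernOrlicz}), I would exploit the MGF bound directly through the identity $e^{y^2/(2a)} = \mathbb{E}_Z \exp(yZ/\sqrt{a})$ with $Z\sim N(0,1)$. Taking $y = f(\bv W)-\mathbb{E}f(\bv W)$, Fubini and the sub-Gaussian MGF give
\[
\mathbb{E}\exp\Bigl(\frac{[f(\bv W)-\mathbb{E}f(\bv W)]^2}{2a}\Bigr) \leq \mathbb{E}_Z \exp\Bigl(\frac{Z^2\|\Delta f\|_2^2}{8a}\Bigr) = \frac{1}{\sqrt{1-\|\Delta f\|_2^2/(4a)}},
\]
valid for $a > \|\Delta f\|_2^2/4$. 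Setting $t^2 = 2a$ and forcing the right-hand side to be at most $2$ yields $a \geq \|\Delta f\|_2^2/3$, hence $t^2 \geq 2\|\Delta f\|_2^2/3$; then picking $t = \sqrt{3/2}\,\|\Delta f\|_2$ comfortably satisfies $\mathbb{E}\Psi([f(\bv W)-\mathbb{E}f(\bv W)]/t) \leq 1$, as required.

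The steps are all classical and the main subtlety is the constant-chasing in (\ref{eq:BernMoment}); I expect the Gamma-function inequality $\Gamma(q/2) \leq q^{q/2-1}\pi^{q/2}$ (tight at $q=1$) to be the only place requiring a careful argument rather than a one-line invocation of a standard identity.
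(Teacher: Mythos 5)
Your plan is the standard Azuma--Hoeffding proof and it matches the route the paper indicates: the paper simply cites McDiarmid for (\ref{eq:McDiarmid}) and remarks that (\ref{eq:BernMoment}) and (\ref{eq:BernOrlicz}) ``follow by calculation'' from the two-sided tail bound $\mathbb{P}(|f(\bv W)-\mathbb{E}f(\bv W)|>\epsilon)\leq 2e^{-2\epsilon^2/\|\Delta f\|_2^2}$. Your layer-cake computation for (\ref{eq:BernMoment}) is exactly the intended one, and you are right that the only non-routine step is the inequality $q\Gamma(q/2)\leq(\pi q)^{q/2}$ (equivalently $\Gamma(q/2)\leq q^{q/2-1}\pi^{q/2}$), which holds with equality at $q=1$ and whose logarithm has positive derivative on $[1,\infty)$; checking this monotonicity is where the real work is, and ``by calculation'' in the paper hides exactly this.

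The one place you genuinely depart from the paper is (\ref{eq:BernOrlicz}). The paper derives it from the two-sided tail bound: writing $v=\|\Delta f\|_2^2$ and using the layer-cake again,
\[
\mathbb{E}\Psi\!\left(\frac{f(\bv W)-\mathbb{E}f(\bv W)}{t}\right)\leq 2\int_0^\infty e^{-(2t^2/v-1)u}\,\mathd u=\frac{2v}{2t^2-v},
\]
valid for $t^2>v/2$, and setting this $\leq 1$ gives exactly $t^2\geq\tfrac32 v$, i.e.\ the constant $\sqrt{3/2}$ in the statement. Your route instead passes the sub-Gaussian MGF bound through a Gaussian randomisation and arrives at $1/\sqrt{1-v/(4a)}\leq 2$, which requires only $a\geq v/3$ and hence $t\geq\sqrt{2/3}\,\|\Delta f\|_2$. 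This is correct and in fact strictly sharper than the claimed constant $\sqrt{3/2}$. In short: your proof is valid, it gives the stated moment bound after the Gamma-function check you flagged, and it improves the Orlicz constant beyond what the paper's tail-bound calculation delivers.
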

The `one-sided' inequality (\ref{eq:McDiarmid}) is famously due to
McDiarmid \citep[Theorem 3.1]{McDiarmid:98}. 
Adding (\ref{eq:McDiarmid}) to itself
applied to $-f$ yields the `two-sided' inequality for tail estimates
of $|f(\bv{W})-\mathbb{E}f(\bv{W})|$,
from which (\ref{eq:BernMoment}) and
(\ref{eq:BernOrlicz}) follow by calculation.

To compare $\mathbb{E}f(\bv{W})$ with $f(\mathbb{E}\bv{W})$, we can make
use of a simple telescoping trick dating back to Lindeberg's original
analytic proof of the central limit theorem. For more recent applications
of this trick, we refer the reader to the paper of Chatterjee
\citep{Chatterjee:2006aa}.
Let $f \in \mathcal{C}^2([0,1]^n)$ and $W_1,\dots,W_n$ be independent
random variables with $p_i = \mathbb{E}W_i$ for each $i=1,\dots,n$.
Now, for each $i=0,\dots,n$, let $\tilde{\bv{W}}_i =
(W_1,\dots,W_i,p_{i+1},\dots,p_n)$, so that, from Taylor's Theorem,
\begin{align*}
|\mathbb{E}f(\bv{W}) - f(\bv{p})| &\leq \sum_{i=1}^n |\mathbb{E}f(\tilde{\bv{W}}_i)
-\mathbb{E}f(\tilde{\bv{W}}_{i-1})| \\
&\leq \sum_{i=1}^n |\mathbb{E}[\partial_i f(\tilde{\bv{W}}_{i-1})\cdot(W_{i,t}-p_{i,t})]| + \frac12 \sum_{i=1}^n \|\partial_i^2 f\|_{\infty}.
\end{align*}
But, since $\tilde{W}_{i-1}$ is independent of $W_i$, the first term is identically
0, and
\begin{equation}
\label{eq:Lindeberg}
|\mathbb{E}f(\bv{W}) - f(\bv{p})| \leq \frac12 
\sum_{i=1}^n \|\partial_i^2 f\|_{\infty}.
\end{equation}
Theorem~\ref{thm:BernFuncMoments} together with (\ref{eq:Lindeberg}) provides an
effective measure on the deviation of $f(\bv{W})$ from $f(\bv{p})$ for any
arbitrary $f \in \mathcal{C}^2([0,1]^n)$. We shall also find it useful to
perform a linear approximation to $f$ as an intermediary to computing $f(\bv{W})$,
and bound the error incurred in doing so.
This requires a tighter estimate than is offered in the moment inequalities
of Theorem~\ref{thm:BernFuncMoments}, for which the Efron-Stein
inequality \citep[Theorem 3.1]{Boucheron:2013aa} will suffice.
\begin{lemma}
\label{lem:LinMOBD}
There exists a universal
constant $C > 0$ such that, for any $f \in \mathcal{C}^3([0,1]^n)$,
\begin{multline*}
\mathbb{E}\left| f(\bv{W})-f(\bv{p})-\textstyle{\sum_{j=1}^n} \partial_j f(\bv{p})
(W_j - p_j)\right|^2 \\
\leq C \sum_{j,k=1}^n 
\|\partial_j \partial_k f\|_{\infty}^2
+ C\sum\limits_{j=1}^n \left(\sum\limits_{k=1}^n \|\partial_j\partial_k^2 f\|_{\infty}\right)^2.
\end{multline*}
\end{lemma}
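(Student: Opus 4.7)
The plan is to combine the Efron--Stein inequality (which sits behind the moment bound~(\ref{eq:BernMoment})) with a careful second-order Taylor expansion of $f$ about $\bv{p}$. Setting $R(\bv{W}) := f(\bv{W})-f(\bv{p})-\sum_{j=1}^{n}\partial_{j}f(\bv{p})(W_{j}-p_{j})$ and $\xi_{j}:=W_{j}-p_{j}$, I would split $\mathbb{E}R^{2}=\var(R)+(\mathbb{E}R)^{2}$ and handle the two pieces separately.

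For the variance, since $W_{1},\dots,W_{n}$ are independent Bernoullis, Efron--Stein gives $\var(R)\leq\sum_{i}p_{i}(1-p_{i})\,\mathbb{E}[(\Delta_{i}R)^{2}]$, where $\Delta_{i}R(\bv{x}_{-i})=\int_{0}^{1}\partial_{i}f(\bv{x}_{-i},s)\,\mathd s-\partial_{i}f(\bv{p})$. A second-order Taylor expansion of the integrand about $\bv{p}$ produces
\[
\Delta_{i}R(\bv{x}_{-i}) = \sum_{k\neq i}\partial_{k}\partial_{i}f(\bv{p})(x_{k}-p_{k}) + \bigl(\tfrac{1}{2}-p_{i}\bigr)\partial_{i}^{2}f(\bv{p}) + T_{i}(\bv{x}_{-i}),
\]
where $T_{i}$ collects the third-order remainder. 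Independence of the $\xi_{k}$'s kills the cross terms when squaring the linear part, leaving $\sum_{k\neq i}(\partial_{k}\partial_{i}f(\bv{p}))^{2}\sigma_{k}^{2}+(\tfrac{1}{2}-p_{i})^{2}(\partial_{i}^{2}f(\bv{p}))^{2}$. Summing over $i$ and using $p_{i}(1-p_{i})\leq\tfrac{1}{4}$ recovers the first right-hand-side term $C\sum_{j,k}\|\partial_{j}\partial_{k}f\|_{\infty}^{2}$.

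The remainder $T_{i}$ I would treat via the integral form of Taylor's theorem: it is the average over $s\in[0,1]$ of expressions $\sum_{k,l}\partial_{k}\partial_{l}\partial_{i}f(\cdot)(\tilde{x}_{k}-p_{k})(\tilde{x}_{l}-p_{l})$ with $\tilde{\bv{x}}=(\bv{x}_{-i},s)$. Using $|\tilde{x}_{k}-p_{k}|\leq 1$ pointwise and collapsing to the diagonal $k=l$ via $|ab|\leq\tfrac{1}{2}(a^{2}+b^{2})$ gives $\|T_{i}\|_{\infty}\leq C\sum_{k}\|\partial_{i}\partial_{k}^{2}f\|_{\infty}$, so that $(a+b)^{2}\leq 2a^{2}+2b^{2}$ contributes the second right-hand-side term $C\sum_{j}(\sum_{k}\|\partial_{j}\partial_{k}^{2}f\|_{\infty})^{2}$.

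The main obstacle is controlling $(\mathbb{E}R)^{2}$: the crude Lindeberg estimate~(\ref{eq:Lindeberg}) leaves behind a $\sum_{i}\|\partial_{i}^{2}f\|_{\infty}$ factor not absorbed by the right-hand side. I would instead deploy the exact one-coordinate Bernoulli identity
\[
p_{i}f(\cdots,1,\cdots)+(1-p_{i})f(\cdots,0,\cdots)-f(\cdots,p_{i},\cdots) = \tfrac{1}{2}p_{i}(1-p_{i})\partial_{i}^{2}f(\cdots,p_{i},\cdots)+O(\|\partial_{i}^{3}f\|_{\infty})
\]
inside a coordinate-by-coordinate hybrid replacement, and match the quadratic $\partial_{i}^{2}f$ contributions against the Taylor quadratic piece already accounted for by $R$, so that only the cubic remainder survives and is absorbed by the second right-hand-side term. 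Combined with the variance estimate, this completes the bound; the delicate bookkeeping in this cancellation, rather than Efron--Stein itself, is where the real work lies.
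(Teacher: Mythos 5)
Your top-level decomposition $\mathbb{E}R^{2} = \var R + (\mathbb{E}R)^{2}$, with Efron--Stein driving the variance term, is exactly the paper's skeleton, but both halves of your argument have gaps. For the variance, you Taylor-expand the discrete increment $\Delta_{i}R$ to second order about $\bv{p}$, and the resulting remainder $T_{i}$ contains the full mixed third derivatives $\partial_{k}\partial_{l}\partial_{i}f$ with $k\neq l$. The AM--GM step $|ab|\leq\tfrac12(a^{2}+b^{2})$ acts on the increment factors $(\tilde{x}_{k}-p_{k})(\tilde{x}_{l}-p_{l})$, not on the derivatives, so it cannot collapse $\partial_{k}\partial_{l}\partial_{i}f$ to the diagonal $\partial_{i}\partial_{k}^{2}f$ terms that the right-hand side of the lemma permits. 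The paper sidesteps this entirely: after Efron--Stein it is left with $\mathbb{E}[\partial_{j}f(\bv{W})-\partial_{j}f(\bv{p})]^{2}$, which it splits into a centred part controlled by Theorem~\ref{thm:BernFuncMoments} (yielding $\sum_{k}\|\partial_{j}\partial_{k}f\|_{\infty}^{2}$) and a bias $|\mathbb{E}\partial_{j}f(\bv{W})-\partial_{j}f(\bv{p})|$ controlled by~(\ref{eq:Lindeberg}) applied to $\partial_{j}f$ (yielding $(\sum_{k}\|\partial_{j}\partial_{k}^{2}f\|_{\infty})^{2}$) --- no second-order Taylor expansion of $\partial_{j}f$, hence no off-diagonal third derivatives to worry about.

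On $(\mathbb{E}R)^{2}$ you have put your finger on a genuine defect: (\ref{eq:Lindeberg}) applied to $F(\bv{x})=f(\bv{x})-\sum_{j}\partial_{j}f(\bv{p})x_{j}$ yields $(\mathbb{E}R)^{2}\leq\tfrac14\bigl(\sum_{i}\|\partial_{i}^{2}f\|_{\infty}\bigr)^{2}$, which is not bounded by the stated right-hand side, and the paper's one-line treatment of this term leaves the point unaddressed. However, your proposed repair cannot work: you want to ``match the quadratic $\partial_{i}^{2}f$ contributions against the Taylor quadratic piece already accounted for by $R$,'' but $R$ subtracts only the affine approximation of $f$, so there is no quadratic piece against which anything could cancel. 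The bias is irreducibly there: take $f(\bv{x})=\tfrac12\sum_{j}x_{j}^{2}$ with $p_{j}\equiv\tfrac12$; then $W_{j}^{2}=W_{j}$ forces $R\equiv n/8$ deterministically, so $\mathbb{E}R^{2}=n^{2}/64$ while the right-hand side equals $Cn$. The lemma as stated is therefore false, and the correct version must retain a $\bigl(\sum_{j}\|\partial_{j}^{2}f\|_{\infty}\bigr)^{2}$ term; this is harmless for Proposition~\ref{prop:LinEstimate}, which anyway rounds everything up to $n\max_{j,k}\|\partial_{j}\partial_{k}f\|_{\infty}$. Dropping the cancellation idea and simply carrying that extra term is the fix.
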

\begin{proof}
Denoting $F(\bv{x}) = f(\bv{x}) - \sum_{j=1}^n \partial_j f(\bv{p}) x_j$,
from (\ref{eq:Lindeberg}), it suffices to consider
$\var F(\bv{W})$.
Let $W_j'$ be an independent copy of $W_j$ and $\bv{W}^j = (W_1,\dots,W_{j-1}, W_j', \allowbreak W_{j+1},\dots,W_n)$ for each $j=1,\dots,n$. Then there is a random vector~$\tilde{\bv{W}}^j$ such that
\[
F(\bv{W})-F(\bv{W}^j) = [\partial_j f(\bv{W})-\partial_j f(\bv{p})](W_j-W_j')
-\tfrac12 \partial_j^2 f(\tilde{\bv{W}}^j)(W_j-W_j')^2.
\]
But, from \citep[Theorem 3.1]{Boucheron:2013aa}, $\var F(\bv{W}) \leq \frac12 \sum_{j=1}^n \mathbb{E} V_j$ where
\[
V_j \, := \mathbb{E}[\cond{ \{ F(\bv{W})-F(\bv{W}^j)  \}^2 }{\bv{W}}]
\leq 2[\partial_j f(\bv{W}) - \partial_j f(\bv{p})]^2 + \tfrac12 \|\partial_j^2 f\|_{\infty}^2.
\]
The lemma now follows from Theorem~\ref{thm:BernFuncMoments} and
(\ref{eq:Lindeberg}).
\end{proof}

The final ingredient in the proof of Lemma \ref{lem:ApproxLemma2}
is a moment inequality for a sum of conditionally independent 
$\{0,1\}$-valued random variables, in which the constant does not 
depend on the number of variables. The conditional Rosenthal-type inequality
in Lemma \ref{lem:PoisBinMoments} proves effective, found by modifying the
arguments of \citep[Theorem 2.5]{Johnson:1985aa}.

\begin{lemma}
\label{lem:PoisBinMoments}
Let $X_1,\dots,X_n$ be $\{0,1\}$-valued random variables on a probability space 
$(\Omega,\mathcal{E},\mathbb{P})$ which are conditionally independent according
 to a sub-$\sigma$-algebra $\mathcal{F}$ of $\mathcal{E}$. Then, for any
 $q \geq 1$, 
\begin{equation}
\left\lVert \sum_{i=1}^n X_i \right\rVert_q \leq 2 q \left(1 + \left\lVert 
\sum_{i=1}^n \mathbb{E}[X_i\vert \mathcal{F}] \right\rVert_q\right).\label{eq:PoisBinBound}
\end{equation}
\end{lemma}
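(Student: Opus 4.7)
The plan is to reduce, via conditioning on $\mathcal{F}$, to a $q$-th moment estimate for the Poisson-Binomial distribution, and thence, via convex ordering, to a moment estimate for a single Poisson random variable. Let $Y_i = \mathbb{E}[X_i\mid \mathcal{F}]$ and $T = \sum_{i=1}^n Y_i$, so that conditional on $\mathcal{F}$, $S := \sum_{i=1}^n X_i$ is a sum of independent Bernoullis with mean $T$.

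It is classical that such a Poisson-Binomial sum is dominated in convex order by a Poisson random variable with the same mean; applied conditionally to the convex function $x \mapsto x^q$ on $[0,\infty)$ (with $q \geq 1$) this gives
\[
\mathbb{E}[S^q\mid\mathcal{F}] \leq \mathbb{E}[P^q\mid\mathcal{F}],
\]
where, conditionally on $\mathcal{F}$, $P \sim \mathrm{Poisson}(T)$. The key step is then to establish the pointwise bound $\mathbb{E}[P^q\mid\mathcal{F}] \leq (2q(1+T))^q$ almost surely. Granted this, taking expectations and using $\|1+T\|_q \leq 1 + \|T\|_q$ produces
\[
\|S\|_q^q = \mathbb{E}\,\mathbb{E}[S^q\mid\mathcal{F}] \leq (2q)^q\,\mathbb{E}(1+T)^q \leq (2q)^q(1+\|T\|_q)^q,
\]
and the lemma follows on taking $q$-th roots.

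For the Poisson moment bound I would proceed via the conditional Chernoff estimate $\mathbb{E}[e^{\lambda P}\mid\mathcal{F}] = \exp(T(e^\lambda - 1))$, which, after optimizing in $\lambda$, yields $\mathbb{P}(P \geq u\mid\mathcal{F}) \leq \exp(-u\log(u/T) + u - T)$ for $u > T$. Integrating $\mathbb{E}[P^q\mid\mathcal{F}] = q\int_0^\infty u^{q-1}\mathbb{P}(P \geq u\mid\mathcal{F})\,\mathd u$ and splitting at $u \asymp q + T$ delivers the required estimate after routine manipulations. The hard part will be the careful bookkeeping needed to pin down the sharp pre-factor $2q$: for integer $q$ this is equivalent to controlling the Bell polynomial $B_q(T) = \mathbb{E}[P^q\mid\mathcal{F}]$ through Stirling-number estimates, which is exactly where the combinatorial analysis of Johnson, Schechtman \& Zinn~\citep{Johnson:1985aa} is adapted. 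The $\{0,1\}$-valued structure simplifies matters substantially since no truncation step is required, and the conditional $L^p$ norms of the centred variables $X_i - Y_i$ are already controlled by $Y_i$ for every $p \geq 1$.
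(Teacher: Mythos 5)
The paper provides no written proof of Lemma~\ref{lem:PoisBinMoments}: it only says the inequality is ``found by modifying the arguments of [Theorem~2.5 of Johnson, Schechtman \& Zinn].'' Your proposal must therefore be judged on its own merits, and its architecture is sound and quite likely close in spirit to what the authors intend. Conditioning on $\mathcal{F}$ reduces the claim to a deterministic Poisson--Binomial bound; the single-crossing (Ohlin) criterion gives $\mathrm{Bern}(p)\leq_{cx}\mathrm{Poisson}(p)$, convex order is preserved under independent convolution, and $x\mapsto x^q$ is convex on $[0,\infty)$ for $q\geq 1$, so the conditional Poisson domination $\mathbb{E}[S^q\mid\mathcal{F}]\leq\mathbb{E}[P^q\mid\mathcal{F}]$ is legitimate. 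The closing Minkowski step $\|1+T\|_q\leq 1+\|T\|_q$ is immediate. Packaging the Poisson extremality via convex ordering is a clean, transparent way to expose the mechanism behind the sharp constants in JSZ, and it is a perfectly good route for the conditional version needed here.

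The real content is the Poisson moment estimate $\|\mathrm{Poisson}(\lambda)\|_q\leq 2q(1+\lambda)$, and this is where the proposal is thin. The Chernoff-plus-integration split you outline (threshold $u\asymp q+\lambda$) gives the right shape but a strictly weaker prefactor: splitting, say, at $u=e^2(q+\lambda)$ yields $\mathbb{E}P^q\leq\bigl(e^2(q+\lambda)\bigr)^q+\Gamma(q+1)$, i.e.\ a constant of order $e^2q$ rather than $2q$, and no amount of ``routine manipulations'' around the tail bound will close that factor. Recovering $2q$ genuinely requires the Bell/Stirling route you mention in passing --- for $\lambda\leq 1$ bound $\mathbb{E}P^q$ by the Bell number $B_q$ and verify $B_q^{1/q}\leq 2q$, for $\lambda\geq 1$ factor out $\lambda^q$, and handle non-integer $q$ by monotonicity of $L^q$ norms --- which is precisely the JSZ calculation. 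You rightly flag this as the hard part and correctly name the source, so this is an honest sketch rather than a complete proof; but the constant-level calibration is not an afterthought, it is essentially the whole lemma, and the Chernoff route as written will not deliver it.
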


In fact, (\ref{eq:PoisBinBound}) can be improved to order $q / \log q$,
but this provides no significant improvement to our results.

\end{document}